\def\W{\mathcal W}
\def\R{\mathbb R}
\def\C{\mathbb C}
\def\N{\mathbb N}
\def\S{\mathbb{S}}
\def\T{\mathbb T}
\def\P{\mathbb P}
\def\E{\mathbb E}
\def\L{\mathrm L}
\def\H{\mathrm H}
\def\O{\Omega}
\def\eps{\varepsilon}
\def\D{\mathcal D}
\def \p {\partial}
\DeclareMathOperator{\supp}{supp}
\DeclareMathOperator{\re}{Re}
\DeclareMathOperator{\im}{Im}
\DeclareMathOperator{\vspan}{Span}
\def\d{\mathrm{d}}
\newtheorem{theo}{Theorem}[section]
\crefname{theo}{Theorem}{Theorems}
\Crefname{theo}{Theorem}{Theorems}
\newtheorem{prop}[theo]{Proposition}
\crefname{prop}{Proposition}{Propositions}
\Crefname{prop}{Proposition}{Propositions}
\newtheorem{lemme}[theo]{Lemma}
\crefname{lemme}{Lemma}{Lemmas}
\Crefname{lemme}{Lemma}{Lemmas}
\newtheorem{cor}[theo]{Corollary}
\crefname{cor}{Corollary}{Corollaries}
\Crefname{cor}{Corollary}{Corollaries}
\crefname{section}{Section}{Sections}
\Crefname{section}{Section}{Sections}
\crefname{equation}{}{}
\Crefname{equation}{}{}
\theoremstyle{definition}
\newtheorem{rem}[theo]{Remark}
\crefname{rem}{Remark}{Remarks}
\Crefname{rem}{Remark}{Remarks}
\newcommand{\e}{\mathrm{e}} 
\newcommand{\complexI}{\mathrm{i}} 
\DeclarePairedDelimiterX{\wick}[1]{:}{:}{#1} 
\DeclareMathOperator*{\argmin}{argmin}
\newcommand{\indic}[1]{\mathds{1}_{#1}}
\newcommand{\Atop}[2]{\genfrac{}{}{0pt}{1}{#1}{#2}} 
\numberwithin{equation}{section}
\title{Standing waves of the Anderson-Gross-Pitaevskii equation}
\author{S. Mackowiak\thanks{Univ Brest,  CNRS UMR 6205,  Laboratoire de Mathématiques de Bretagne Atlantique,  F-29200,  Brest,  France} \thanks{Université de Lorraine,  CNRS,  IECL,  F-54000 Nancy,  France}\\
\href{mailto:samael.mackowiak@univ-lorraine.fr}{samael.mackowiak@univ-lorraine.fr}}
\date{}
\begin{document}

\maketitle

\begin{abstract}
    In this paper, we study standing waves for the Anderson-Gross-Pitaevskii equation in dimension 1 and 2. The Anderson-Gross-Pitaevskii equation is a nonlinear Schrödinger equation with a confining potential and a multiplicative spatial white noise. Standing waves are characterized by a profile which is invariant by the dynamic and solves a nonlinear elliptic equation with spatial white noise potential. We construct such solutions via variational methods and obtain some results on their regularity, localization and stability.
\end{abstract}
\section{Introduction}

In this paper, we are interested in particular solutions of the Anderson-Gross-Pitaevskii
\begin{equation}{~}
    \begin{cases}\complexI\p_t u+ H u + \xi u+\lambda \left|u\right|^{2\gamma}u=0\\ u(0)=u_0,\end{cases} \label{Eq:AGP}
\end{equation} 
of the form $u(t,x)=\e^{i\omega t}\phi(x)$, where $H=\Delta-|x|^2$ is the Hermite operator, $\xi$ is a real spatial white noise, $\lambda\in\R$ and $\gamma>0$. Such a solution is called a standing wave of \cref{Eq:AGP} of frequency $\omega\in\R$ and profile $\phi$. The study of standing waves is motivated by the modeling of Bose-Einstein condensation. 
In fact, \cref{Eq:AGP} can be seen as a perturbation of the Gross-Pitaevskii equation
\begin{equation}{~}
    \begin{cases}\complexI\p_t u+ H u +\lambda \left|u\right|^{2\gamma}u=0\\ u(0)=u_0,\end{cases} \label{Eq-GP}
\end{equation} 
by a white noise potential. In the study of Bose-Einstein condensation, \cref{Eq-GP} appears as a mean-field limit of the many body Schrödinger equation, the nonlinear term appearing as a consequence of particle interactions \cite{BEC_Pitaevskii}. Adding a random potential in \cref{Eq-GP} can be seen as a way of modeling spatial inhomogeneities (see for example \cite{PhysRevLett.104.193901,PhysRevLett.109.243902} and references therein), thus one can see \cref{Eq-AGP} as a toy model of Bose-Einstein condensation in inhomogeneous environment with zero correlation length. In this context, at least formally, \cref{Eq:AGP} has two conserved quantities physically relevant, the mass 
$$\mathcal{M}(u)=\frac{1}{2}\int_{\R^d}|u|^2\d x$$
and the energy
$$\mathcal{E}(u)=\frac{1}{2}\langle-(H+\xi)u,u\rangle - \frac{\lambda}{2\gamma+2}\int_{\R^d}|u|^{2\gamma+2}\d x.$$
Standing waves of \cref{Eq:AGP} then appear naturally as solutions of minimal energy at fixed mass. Formally, if $\phi$ minimize the energy at fixed mass, by a Lagrange multiplier argument, there exists $\omega\in\R$ such that
$$-(H+\xi)\phi-\lambda|\phi|^{2\gamma}\phi+\omega \phi = 0$$
and $u(t,x)=\e^{i\omega t}\phi(x)$ is solution of \cref{Eq:AGP}.\\

In dimension 2, it is known from \cite{Mackowiak_2025,MackowiakStrichartzConfAnderson} that \cref{Eq:AGP} should be interpreted in a renormalized sense. This renormalization has to be taken into account in the study of standing waves. The construction of the renormalized operator has been conducted in \cite{MackowiakStrichartzConfAnderson} and will be recall in \cref{Sec-AndersonOp2d}. The renormalized operator is defined through an exponential transform inspired by the one introduced in \cite{Haire_Labbe} to study the continuous parabolic Anderson model, and later used in \cite{debussche_weber_T2,Tzvetkov_2023,tzvetkov2020dimensional} to solve the Anderson nonlinear Schrödinger equation on the torus $\T^2$ and in \cite{debussche2017solution,debussche2023global} to solve the same equation but on the full space $\R^2$. Let $Y=(-H)^{-1}\xi$, it is almost surely of regularity $1-$, so one can define $\rho=\e^Y$ and search for a formula for $(H+\xi)u$ when $u$ is of the form $u=\rho v$. A formal computation gives
$$(H+\xi)u=\rho\left(Hv+2\nabla Y\cdot \nabla v + xY\cdot xv + |\nabla Y|^2v\right).$$
Unfortunately, $\nabla Y$ is of regularity $0-$, thus the product $|\nabla Y|^2$ barely fails to exist. In \cite{Mackowiak_2025}, the author proved that the wick product $\wick{|\nabla Y|^2}$ can be defined as the almost sure limit, in suitable distribution spaces of regularity $0-$, of
$$|\nabla Y_N|^2-\E[|\nabla Y_N|^2],$$
where $Y_N$ is a suitable regularisation of $Y_N$.  Thus, the renormalized operator can be formally defined as
$$(H+\xi)u=\rho\left(Hv+2\nabla Y\cdot \nabla v + xY\cdot xv + \wick{|\nabla Y|^2}v\right).$$
The rigorous construction of $H+\xi$ rely on the associated quadratic form. 

Even in dimension 1, the construction of $H+\xi$ is not straightforward as $\xi$ is not a function. On compact interval with no potential, $\p_x^2+\xi$ has first been constructed in \cite{FukushimaNakao} using its quadratic form and has been studied in \cite{DumazLabbeAndersonLoc1D} using a twisted Sturm-Liouville approach, allowing to construct $\p_x^2+\xi$ on unbounded interval too. In \cref{Sec-AndersonOp1d}, we propose a construction of $H+\xi$ as a self-adjoint unbounded operator through its quadratic form. \cref{Prop-IPP1d} shows $\mathrm{D}(H+\xi)=\mathrm{D}(\p_x^2+\xi)\cap \mathrm{D}(|x|^2)$, where $\p_x^2+\xi$ is the unbounded self-adjoint operator on $\L^2(\R)$ constructed in \cite{DumazLabbeAndersonLoc1D} and $\mathrm{D}(|x|^2)=\{u\in\L^2(\R),xu\in\L^2(\R)\}$.\\

We are then interested in standing waves for
\begin{equation}{~}
    \begin{cases}\complexI\p_t u+ Au+\lambda \left|u\right|^{2\gamma}u=0\\ u(0)=u_0,\end{cases} \label{Eq-AGP}
\end{equation} 
in dimension $d\in\{1,2\}$, where $A=H+\xi$. As $A$ is independent of time, standing waves $u(t,x)=\e^{\complexI\omega t}\phi(x)$ solves \cref{Eq-AGP} if and only if $\phi$ is solution of
\begin{equation}
    -A\phi-\lambda|\phi|^{2\gamma}\phi+\omega \phi = 0. \label{Eq-StationaryAGP}
\end{equation}
This paper aim to construct standing waves in presence of a white noise potential and present some of their properties. 

When $\xi=0$ and $H=\Delta$, standing waves are well-known object. See for example \cite{NehariExistence,BerestyckiLions,StraussExistenceSolitary} for existence, \cite{BerestyckiLions} for non-existence, \cite{GidasSymmetry,KwongUniqueness} for uniqueness and \cite{BerestyckiInstabilité,CazenaveLionsStability,CazenaveStabilityInstability,WeinsteinSharpInterpolation,GrillakisStability,GrillakisStabilityII} for results on stability. Let us stress out many of these results relies on the particular structure of \cref{Eq-StationaryAGP} in this case, which is not preserved in presence of potentials. When $\xi=0$ and $H=\Delta-V$ with $V$ a regular and potentially unbounded potential, we refer the reader to \cite{KavianSelfSimilar,FukuizumiStabilityInstability,HiroseStructure,HiroseUniqueness,FukuizumiExponentialDecay,MaedaSymmetry,SelemBifurcation,GuoSeringerMassConcentration,PelinovskyGSEnergyCriticalGP} and references therein for many results on existence, uniqueness, localization and stability.

Nonlinear elliptic equations in presence of white noise potential on compact manifold have been studied in \cite{EulryVariational} using mountain pass methods and in \cite{ZhangConstraintMinAnderson} using constraint minimization. In both \cite{EulryVariational,ZhangConstraintMinAnderson}, the Anderson operator $\Delta+\xi$ is constructed using paracontrolled calculus. We will present here the construction of solution of \cref{Eq-StationaryAGP} through constraint minimization, namely energy minimization at fixed mass and action minimization. This allow to construct a positive solution of \cref{Eq-StationaryAGP} whenever such a solution exists.

\begin{theo}\label{Th-SynthesExistenceSW}
    Let $d\in\{1,2\}$, $\lambda,\omega\in\R$ and $\gamma>0$. Denote by $\mu_0$ the lowest eigenvalue of $-A$. In each of the following situation, there exists a positive solution of \cref{Eq-StationaryAGP} in $\mathrm{D}\left(\sqrt{|A|}\right)$,
    \begin{itemize}
        \item $\lambda>0$ and $\omega>-\mu_0$,
        \item $\lambda = 0$ and $\omega=\mu_0$,
        \item $\lambda<0$ and $\omega<-\mu_0$.
    \end{itemize}
    In every other cases, there is no non-trivial non-negative solution in $\mathrm{D}\left(\sqrt{|A|}\right)$.
\end{theo}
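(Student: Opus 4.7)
I would treat the three existence regimes separately, each by a tailored constrained minimization on the form domain $\mathrm{D}(\sqrt{|A|})$. The key analytic input is the Rellich-type compactness of the embedding of this form domain into $L^{2\gamma+2}$, inherited from the $|x|^{2}$-confinement; in dimension $2$ this is read on the exponentially transformed variable $v=\rho^{-1}u$, in whose coordinates the renormalized form becomes a coercive elliptic form plus a regularity-$(1-)$ perturbation.

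\textbf{Focusing regime ($\lambda>0$, $\omega>-\mu_0$).} Here $-A+\omega$ is positive definite, so $I_\omega(u)=\langle(-A+\omega)u,u\rangle$ is equivalent to the squared form-domain norm. The plan is to minimize $I_\omega$ over $\{u\in\mathrm{D}(\sqrt{|A|}) : \|u\|_{L^{2\gamma+2}}=1\}$: a minimizing sequence is bounded in the form domain, hence weakly convergent there and, by compact embedding, strongly convergent in $L^{2\gamma+2}$, yielding a nonzero minimizer $\psi$. One may replace $\psi$ by $|\psi|$ thanks to a Kato-type inequality $\langle-A|u|,|u|\rangle\le\langle-Au,u\rangle$ (verified after the exponential transform in 2d). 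The Euler-Lagrange equation reads $(-A+\omega)\psi=\kappa\psi^{2\gamma+1}$ with $\kappa=I_\omega(\psi)>0$, and $\phi=(\kappa/\lambda)^{1/(2\gamma)}\psi$ then solves \cref{Eq-StationaryAGP}.

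\textbf{Linear and defocusing regimes.} For $\lambda=0$, the equation reduces to the eigenvalue problem for $-A$ at its bottom $\mu_0$; a positive ground state is produced by minimizing the Rayleigh quotient (same compactness) together with a Perron-Frobenius argument for the positivity-preserving semigroup $(\e^{tA})_{t\ge0}$. For $\lambda<0$ and $\omega<-\mu_0$, the natural object is the mass-constrained minimization
$$\min\left\{\mathcal{E}(u)=\tfrac12\langle-Au,u\rangle+\tfrac{|\lambda|}{2\gamma+2}\|u\|_{L^{2\gamma+2}}^{2\gamma+2}\ :\ \|u\|_{L^2}^2=m\right\},$$
which is coercive since both terms are bounded below. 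The compact embedding yields a non-negative minimizer $\phi_m$; pairing the Lagrange-multiplier equation with $\phi_m$ and using $\langle-A\phi_m,\phi_m\rangle\ge\mu_0 m$ shows the multiplier $\omega(m)$ is strictly less than $-\mu_0$. To reach an arbitrary prescribed $\omega<-\mu_0$, I would show that $m\mapsto\omega(m)$ is continuous, tends to $-\mu_0$ as $m\to 0^+$ (linearization) and to $-\infty$ as $m\to\infty$ (testing the energy on a rescaled profile), and conclude by the intermediate value theorem.

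\textbf{Non-existence and main obstacle.} For the excluded regimes, let $\phi\ge 0$ be any putative solution and test against the positive ground state $\phi_0>0$ of $-A$, obtaining
$$(\mu_0+\omega)\int\phi_0\phi\,\d x=\lambda\int\phi_0\phi^{2\gamma+1}\,\d x;$$
in each excluded case the signs of $\mu_0+\omega$ and $\lambda$ force one side to be non-positive and the other non-negative, so both integrals vanish and $\phi\equiv 0$. The hardest step is the compactness analysis in dimension $2$: the form norm is only implicitly defined via the exponential transform, so weak convergence in the form norm must be transported through multiplication by the a.s.\ unbounded weight $\rho$ to recover strong convergence in $L^{2\gamma+2}$. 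A secondary delicate point is verifying the Kato-type inequality $\langle-A|u|,|u|\rangle\le\langle-Au,u\rangle$ in the renormalized 2d setting, on which the positivity of the minimizers crucially depends.
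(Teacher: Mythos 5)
Your proposal follows essentially the same route as the paper: the focusing case is the minimization of $\mathcal{P}_\omega$ on $\{\mathcal{Q}=1\}$, shown in \cref{Prop-EquivalentMinPbFoca} to be equivalent to the Nehari problem of \cref{Th-ACtionGS}; the defocusing case is the mass-constrained energy minimization combined with continuity and the limiting behavior of $m\mapsto\omega_m$ (\cref{Prop-FrequencyEnergyGS,Cor-C0curveEnergyGS}); the linear case is the spectral gap; and the non-existence part is exactly the pairing against $\varphi_0$ of \cref{Prop-APrioriConditionExistence}. The only point you leave implicit is the upgrade from non-negative to strictly positive minimizers in the nonlinear cases, which the paper obtains by viewing $u$ as a non-negative eigenfunction of $-A+(\omega-\lambda u^{2\gamma})$ and invoking the spectral gap (\cref{Prop-Apriori>0}) --- the same Perron--Frobenius argument you already invoke for the linear case.
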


In order to do so, we use constrained minimization methods. In \cref{Sec-EnergyGS}, we study the minimization of energy at fixed mass and prove the following result.

\begin{theo}\label{Th-EnergyGS}

    Let $d\in\{1,2\}$, $\lambda,\omega\in\R$, $\gamma>0$ and $A$ be the self-adjoint operator constructed in \cref{Sec-AndersonOp1d,Sec-AndersonOp2d} with $\xi$ a spatial white noise, $\D^{1,2}=\mathrm{D}(\sqrt{|A|})$ and $\D^{-1,2}$ its dual. For $u\in\D^{1,2}$, we define the mass as
    $$\mathcal{M}(u)=\frac{1}{2}\int_{\R^d}|u|^2\d x$$
    and the energy as
    $$\mathcal{E}(u)=\frac{1}{2}\langle -Au,u\rangle_{\D^{-1,2},\D^{1,2}}-\frac{\lambda}{2\gamma+2}\int_{\R^d}|u|^{2\gamma+2}\d x.$$
    Then, it holds
    \begin{enumerate}
        \item For $\lambda\leqslant0$, for any $m>0$, there exists $\phi_m\in\D^{1,2}$ such that $\phi_m>0$ and
        $$\argmin_{\substack{u\in\D^{1,2}\\\mathcal{M}(u)=m}}\mathcal{E}(u) = \left\{\e^{\complexI\theta}\phi_m,\; \theta\in\R\right\}.$$
        \item For $\lambda>0$ and $\gamma<\frac{2}{d}$, for any $m>0$, there exists $\phi_m\in\displaystyle\argmin_{u\in\D^{1,2},\;\mathcal{M}(u)=m}\mathcal{E}(u)$ such that $\phi_m>0$.
        \item For $\lambda>0$ and $\gamma=\frac{2}{d}$, there exists $m_{*,\lambda}(\Xi)>0$ such that for all $m\in(0,m_{*,\lambda}(\Xi))$, there exists $\phi_m\in\displaystyle\argmin_{u\in\D^{1,2},\;\mathcal{M}(u)=m}\mathcal{E}(u)$ such that $\phi_m>0$. Moreover, there exists $m_{*,\lambda}(\Xi)\leqslant m^*_\lambda(\Xi)\leqslant m^*_\lambda$ such that
        $$\forall m>m^*_\lambda(\Xi), \inf_{\substack{u\in\D^{1,2}\\\mathcal{M}(u)=m}}\mathcal{E}(u)=-\infty,$$
        where $m^*_\lambda$ is the critical mass of the classical nonlinear Schrödinger equation (i.e.\ without any potential).
        \item For $\lambda>0$ and $\gamma>\frac{2}{d}$, for any $m>0$,
        $$\inf_{\substack{u\in\D^{1,2}\\\mathcal{M}(u)=m}}\mathcal{E}(u)=-\infty.$$
    \end{enumerate}
    
\end{theo}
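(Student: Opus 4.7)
In each regime the strategy is the direct method of the calculus of variations, using the compact embedding $\D^{1,2}\hookrightarrow\L^p(\R^d)$ for admissible $p$ (which follows from the compactness of the resolvent of $A$, inherited via the exponential transform from that of $H$). The central analytic ingredient is a Gagliardo--Nirenberg-type inequality of the form
\begin{equation*}
    \|u\|_{\L^{2\gamma+2}}^{2\gamma+2} \leqslant C_{\mathrm{GN}}\bigl(\langle -Au,u\rangle+K(\Xi)\|u\|_{\L^2}^2\bigr)^{d\gamma/2}\|u\|_{\L^2}^{2\gamma+2-d\gamma},
\end{equation*}
obtained by combining the classical Gagliardo--Nirenberg estimate on $\R^d$ with the lower bound $\|\nabla u\|_{\L^2}^2\leqslant\langle -Au,u\rangle+K(\Xi)\|u\|_{\L^2}^2$ read off from the quadratic form of $A$ after the exponential transform.

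For $\lambda\leqslant 0$ both contributions to $\mathcal{E}$ are non-negative, so $\mathcal{E}\geqslant \mu_0 m$ on the mass sphere; for $\lambda>0,\gamma<2/d$ the exponent $d\gamma/2<1$ in the inequality above is what gives coercivity of $\mathcal{E}$; for $\gamma=2/d$ the exponent is exactly $1$ and coercivity holds precisely when the coefficient $\lambda C_{\mathrm{GN}}(2m)^{2/d}/(2\gamma+2)$ is less than $1$, which defines the threshold $m_{*,\lambda}(\Xi)$. Whenever coercivity holds, any minimizing sequence is bounded in $\D^{1,2}$, converges weakly up to extraction, converges strongly in $\L^{2\gamma+2}$ by compact embedding, and the weak lower semicontinuity of $u\mapsto\langle -Au,u\rangle$ produces a minimizer $\phi_m$. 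Positivity is then obtained by passing to $|\phi_m|$ (which preserves mass and does not increase energy since $|\nabla|u||\leqslant|\nabla u|$ pointwise at the level of $v=\rho^{-1}u$) and invoking the strong maximum principle, using that $\rho=\e^Y>0$ almost surely and that the ground state of $-A$ is simple and positive. Uniqueness up to phase in the defocusing case rests on the convexity of $\mathcal{E}$ viewed as a function of $\rho=|u|^2$: the kinetic part by the Benguria--Brezis--Lieb identity applied after the exponential transform, the nonlinear part (for $\lambda\leqslant 0$) because $\rho\mapsto\rho^{\gamma+1}$ is convex, and the harmonic plus renormalized-noise contributions because they are linear in $\rho$.

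For the non-existence statements in Cases 3 and 4, I would construct a concentrating test family $\phi_\eps(x)=\eps^{-d/2}\psi((x-x_0)/\eps)$ with $\psi\in C_c^\infty(\R^d)$, $\|\psi\|_{\L^2}^2=2m$. Mass is preserved while
\begin{equation*}
    \tfrac{1}{2}\|\nabla\phi_\eps\|_{\L^2}^2-\tfrac{\lambda}{2\gamma+2}\|\phi_\eps\|_{\L^{2\gamma+2}}^{2\gamma+2}=\eps^{-2}\tfrac{1}{2}\|\nabla\psi\|_{\L^2}^2-\eps^{-d\gamma}\tfrac{\lambda}{2\gamma+2}\|\psi\|_{\L^{2\gamma+2}}^{2\gamma+2},
\end{equation*}
and the harmonic and renormalized noise contributions remain $O(1)$ as $\eps\to 0$. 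In Case 4 ($d\gamma>2$) the nonlinear term dominates for any fixed $\psi$, yielding $\mathcal{E}(\phi_\eps)\to-\infty$; in Case 3 ($d\gamma=2$) one chooses $\psi$ with negative pure-NLS energy, which is possible exactly when $2m>2m^*_\lambda$, giving the upper threshold. I expect the main technical obstacle to lie in controlling the renormalized noise contribution under the concentration scaling in dimension $2$, where the test functions test the singular object $\wick{|\nabla Y|^2}$ against a family of bumps of shrinking support, and in the fact that the absence of any spatial symmetry forbids any use of Schwarz rearrangement, so that positivity and uniqueness must be extracted through the quadratic form and the exponential transform rather than by symmetrization.
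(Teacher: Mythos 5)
Your overall architecture (direct method with compact embeddings, a Gagliardo--Nirenberg control of the nonlinearity, concentration for blow-up, diamagnetic inequality plus simplicity of the ground state for positivity, hidden convexity in $|u|^2$ for uniqueness) matches the paper's. But two steps fail in dimension $2$, and both failures come from treating $\D^{1,2}$ as if it were $\mathrm{H}^1\cap\{xu\in\L^2\}$. First, the inequality $|\nabla u|_{\L^2}^2\leqslant\langle -Au,u\rangle+K(\Xi)|u|_{\L^2}^2$ is false for $d=2$: writing $u=\rho v$, the quadratic form \cref{Eq-QuadraticFormH+xi} controls $|\rho\nabla v|_{\L^2}^2+|\rho xv|_{\L^2}^2$, whereas $\nabla u=\rho\nabla v+u\nabla Y$ and $u\nabla Y\notin\L^2$ because $\nabla Y$ only has regularity $0-$ (consistently, by \cref{Th-MaxRegSW} elements of the domain are never locally in $\mathrm{H}^1$). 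So you cannot invoke the classical Gagliardo--Nirenberg inequality with the true gradient; the paper instead bounds $\int|u|^{2\gamma+2}$ by $|u|_{\D^{1,2}}^{d\gamma}|u|_{\L^2}^{(2-d)\gamma+2}$ via the fractional embedding $\W^{d\gamma/(2\gamma+2),2}\hookrightarrow\L^{2\gamma+2}$, using that $\D^{s,2}=\W^{s,2}$ for $s<1$ (\cref{Lem-ControlD12Energy}), and in the critical case replaces the classical constant $J$ by a noisy constant $J_\Xi$ defined with $|u|_{\D^{1,2}}$ (\cref{Eq-OptimalNoisyGagliardoNirenbergCte}). This is also why your identification of the threshold through the classical constant $C_{\mathrm{GN}}$ cannot be right as stated: in $d=2$ the paper only obtains $\exp(\inf Y-\sup Y)^4 m^*_\lambda\leqslant m^*_\lambda(\Xi)\leqslant m^*_\lambda$ and deliberately does not claim that the existence threshold $m_{*,\lambda}(\Xi)$ and the finiteness threshold $m^*_\lambda(\Xi)$ coincide.

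Second, your concentrating family $\phi_\eps=\eps^{-d/2}\psi((\cdot-x_0)/\eps)$ with $\psi\in\mathcal{C}^\infty_c(\R^d)$ is not an admissible competitor in $d=2$: one has $\rho^{-1}\phi_\eps\in\W^{1-,2}\setminus\W^{1,2}$, so $\phi_\eps\notin\D^{1,2}=\rho\W^{1,2}$, and the formal expansion of $\langle -A\phi_\eps,\phi_\eps\rangle$ as ``pure NLS part plus $O(1)$ noise corrections'' would produce the nonexistent term $\int|\phi_\eps|^2|\nabla Y|^2\,\d x$. You correctly flagged this as the main obstacle but did not resolve it. The paper's resolution is to scale only the $v$-component, $u_\alpha=\alpha^{d/2}\rho(\cdot)v(\alpha\cdot)$, which stays in $\rho\W^{1,2}$ by construction; the price is that the mass is only asymptotically preserved (hence \cref{Lem-AsympEnergy}) and one must verify $\langle Z,|u_\alpha|^2\rangle=O(\alpha^s)$ for every $s>0$ (\cref{Lem-EstimScalTransf}), which makes the noise pairing genuinely subdominant to the $\alpha^2$ kinetic and $\alpha^{d\gamma}$ nonlinear terms. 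In dimension $1$ your argument goes through essentially unchanged, since there $\D^{1,2}=\W^{1,2}$, $Y\in\mathrm{H}^{1,\infty}$, and the paper indeed recovers $m^*_\lambda(\Xi)=m^*_\lambda$ in that case.
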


It is classical that the set of minimal energy states of fixed mass is stable in $\D^{1,2}$ for \cref{Eq-AGP}. In particular, \cref{Th-EnergyGS} implies the orbital stability of the ground-state $\phi_m$ in the defocusing case ($\lambda<0$). \cref{Prop-ExistenceEnergyGS,Prop-FrequencyEnergyGS} show that, for $\lambda\leqslant 0$ or $\gamma<\frac{2}{d}$, energy minimization at fixed mass provide positive solutions of \cref{Eq-StationaryAGP} for any possible value of $\omega$. We also prove the small mass asymptotic of $\phi_m$ and its frequency $\omega_m$.\\

In \cref{Sec-ActionGS}, we investigate the minimization of action $\mathcal{S}_\omega = \mathcal{E} + \omega \mathcal{M}$. In the focusing case ($\lambda> 0$), one easily verifies the action is not lowerbounded. We thus introduce the Nehari manifold and show the following result.

\begin{theo}\label{Th-ACtionGS}

    Let $d\in\{1,2\}$, $\lambda,\omega\in\R$, $\gamma>0$ and $A$ the self-adjoint operator constructed in \cref{Sec-AndersonOp1d,Sec-AndersonOp2d} with $\xi$ a spatial white noise. Let $\D^{1,2}=\mathrm{D}(\sqrt{|A|})$ and $\D^{-1,2}$ its dual. Denote by $\mu_0$ the smallest eigenvalue of $-A$. Let $\omega>-\mu_0$ and define $$\mathcal{S}_\omega = \mathcal{E} + \omega \mathcal{M}$$ and $$\mathcal{N}_\omega =\left\{u\in\D^{1,2}\backslash\{0\},\; \langle \nabla\mathcal{S}_\omega u,u \rangle_{\D^{-1,2},\D^{1,2}}=0\right\}.$$
    Then, there exists $\psi_\omega\in\displaystyle\argmin_{\mathcal{N}_\omega}\mathcal{S}_\omega$ such that $\psi_\omega>0$.
    
\end{theo}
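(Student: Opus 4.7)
The plan is to carry out the classical Nehari minimization, built on the observation that $\omega>-\mu_0$ makes the quadratic form $q_\omega(u):=\langle(-A+\omega)u,u\rangle_{\D^{-1,2},\D^{1,2}}$ positive definite on $\D^{1,2}$ and equivalent to $\|\cdot\|_{\D^{1,2}}^2$. A direct computation gives $\langle\nabla\mathcal{S}_\omega(u),u\rangle = q_\omega(u)-\lambda\|u\|_{\L^{2\gamma+2}}^{2\gamma+2}$, so that for each $u\in\D^{1,2}\setminus\{0\}$ the map $t\mapsto\mathcal{S}_\omega(tu)$ admits a unique positive maximizer $t_u$ with $t_u u\in\mathcal{N}_\omega$. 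On the Nehari manifold one reads off the identity
$$\mathcal{S}_\omega(u)=\frac{\gamma}{2(\gamma+1)}q_\omega(u),$$
which, together with the continuous embedding $\D^{1,2}\hookrightarrow\L^{2\gamma+2}$ and the Nehari constraint, yields a positive lower bound on $q_\omega(u)$ over $\mathcal{N}_\omega$; in particular $c_\omega:=\inf_{\mathcal{N}_\omega}\mathcal{S}_\omega>0$.

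Next, for a minimizing sequence $(u_n)\subset\mathcal{N}_\omega$ the identity above makes $(u_n)$ bounded in $\D^{1,2}$, so up to extraction $u_n\rightharpoonup\psi$ weakly there. The pivotal ingredient is the compact embedding $\D^{1,2}\hookrightarrow\L^{2\gamma+2}(\R^d)$, stemming from the $|x|^2$-confinement in $A$ (produced in the construction of $A$ in \cref{Sec-AndersonOp1d,Sec-AndersonOp2d} and already exploited in the proof of \cref{Th-EnergyGS}). This yields $u_n\to\psi$ strongly in $\L^{2\gamma+2}$, and the lower bound on $q_\omega(u_n)=\lambda\|u_n\|_{\L^{2\gamma+2}}^{2\gamma+2}$ transfers to $\|u_n\|_{\L^{2\gamma+2}}$, so $\psi\neq 0$. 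Letting $t_\psi>0$ be such that $t_\psi\psi\in\mathcal{N}_\omega$, weak lower semicontinuity of $q_\omega$, strong $\L^{2\gamma+2}$-convergence, and the maximality of $t_u$ give
$$\mathcal{S}_\omega(t_\psi\psi)\leqslant\liminf_n\mathcal{S}_\omega(t_\psi u_n)\leqslant\liminf_n\mathcal{S}_\omega(u_n)=c_\omega,$$
so $t_\psi\psi$ is a minimizer. A short case distinction on whether $q_\omega(\psi)<\liminf q_\omega(u_n)$ is strict shows $t_\psi<1$ would strictly beat $c_\omega$, so in fact $t_\psi=1$ and $\psi\in\mathcal{N}_\omega$ itself.

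To recover \cref{Eq-StationaryAGP}, the constraint function $G(u):=\langle\nabla\mathcal{S}_\omega(u),u\rangle$ satisfies $\langle\nabla G(\psi),\psi\rangle = -2\gamma\lambda\|\psi\|_{\L^{2\gamma+2}}^{2\gamma+2}\neq 0$, so $\mathcal{N}_\omega$ is a smooth manifold near $\psi$ and a Lagrange multiplier argument, combined with pairing the resulting identity with $\psi$, forces the multiplier to vanish. For positivity I would replace $\psi$ by $|\psi|$ using a Kato-type inequality $q_\omega(|\psi|)\leqslant q_\omega(\psi)$: in 1D this is direct from the explicit form of $A$; in 2D it goes through the exponential transform $\psi=\rho v$ of \cref{Sec-AndersonOp2d}, where $\rho>0$ makes $|\psi|=\rho|v|$, the $|x|^2$-weight and the Wick term $\wick{|\nabla Y|^2}$ are invariant under $v\mapsto|v|$, and $|\nabla|v||\leqslant|\nabla v|$. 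Rescaling back onto $\mathcal{N}_\omega$ as in the first step produces a non-negative minimizer, and strict positivity is upgraded via a strong maximum principle, most conveniently applied in 2D to the transformed profile whose elliptic equation has more regular coefficients. The main technical obstacle is precisely the compact embedding $\D^{1,2}\hookrightarrow\L^{2\gamma+2}$: on $\R^d$ it fails for plain $\H^1$, genuinely uses the harmonic confinement in $A$, and in dimension 2 has to coexist with the Wick renormalization; once that compactness is in hand, the rest is the classical Nehari template.
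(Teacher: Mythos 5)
Your proposal is correct, but it takes the direct route that the paper deliberately sidesteps. You minimize $\mathcal{S}_\omega$ on $\mathcal{N}_\omega$ itself via the classical Nehari template: boundedness of a minimizing sequence from the identity $\mathcal{S}_\omega=\frac{\gamma}{2(\gamma+1)}q_\omega$ on $\mathcal{N}_\omega$, compactness of $\D^{1,2}\hookrightarrow\L^{2\gamma+2}$ to rule out vanishing, weak lower semicontinuity plus the fibering maximality of $t\mapsto\mathcal{S}_\omega(tu)$ to land the weak limit (after rescaling by $t_\psi$) back on $\mathcal{N}_\omega$ as a minimizer, and the rescale-onto-Nehari trick again to pass from $\psi$ to $|\psi|$ using the diamagnetic inequality $q_\omega(|\psi|)\leqslant q_\omega(\psi)$. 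The paper instead proves \cref{Prop-EquivalentMinPbFoca}, showing the Nehari problem is equivalent (up to scalar multiples) to minimizing $\mathcal{P}_\omega$ at fixed $\mathcal{Q}=1$ and to minimizing the quotient $\mathcal{J}_\omega$; on the constraint $\{\mathcal{Q}=1\}$ the implication $u\in\argmin\Rightarrow|u|\in\argmin$ is immediate, which is exactly the step the paper judged ``a bit technical'' on $\mathcal{N}_\omega$ and which your $t_{|\psi|}$-rescaling handles directly. Your approach is shorter and self-contained for this one theorem; the paper's buys the equivalence of the three variational problems (and the scaling relations between minimizers for different $\lambda$ and $q$) as reusable information, and its compactness step is marginally simpler since the constraint $\mathcal{Q}(u)=1$ passes to the limit without any fibering argument. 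One caveat: for the final upgrade from $\psi_\omega\geqslant 0$ to $\psi_\omega>0$ you invoke a ``strong maximum principle,'' which is delicate in dimension $2$ where the transformed equation still carries the distributional coefficients $\nabla Y$, $xY$ and $\wick{|\nabla Y|^2}$ of regularity $0-$; the clean tool is \cref{Prop-Apriori>0}, which derives strict positivity from the simplicity of the lowest eigenvalue of $-A+W$ (\cref{Prop-SpectralGapA1d,Prop-SpectralGapA2d}, via Krein--Rutman) rather than from a pointwise elliptic argument, and this is what the paper uses. With that substitution your argument is complete.
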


Let us stress out an important property of solution of \cref{Eq-StationaryAGP} compared to the noiseless case. In presence of white noise, standing waves have a limited Sobolev regularity.
\begin{theo}\label{Th-MaxRegSW}
    Let $d\in\{1,2\}$, $\lambda,\omega\in\R$ and $\gamma>0$. Almost surely any solution $u$ of \cref{Eq-StationaryAGP} belongs to $\W^{2-\frac{d}{2}-,q}$ for all $q\in[2,+\infty)$, where $\W^{s,q}$ spaces are defined in \cref{Sec-ConfiningSobolev}. Moreover, almost surely any solution $u$ of \cref{Eq-StationaryAGP} which belongs to $\mathrm{H}^{2-\frac{d}{2},q}(U)$ for some open set $U\subset\R^d$ and $q\in[2,+\infty]$ vanishes everywhere in $U$.
\end{theo}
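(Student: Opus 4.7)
The plan is to reduce both assertions to regularity properties of $Y=(-H)^{-1}\xi$ via the exponential transform $u=\rho w$ with $\rho=\e^Y$, then to exploit the (almost sure) fact that the spatial white noise $\xi$ does not belong to $\H^{-d/2,q}$ on any nonempty open set — or equivalently that $Y\notin\H^{2-d/2,q}$ on any nonempty open set. Throughout, Sobolev embeddings in the confining scale $\W^{s,q}$ are used freely.

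\textbf{Upper bound on the regularity.} If $u\in\D^{1,2}$ solves \cref{Eq-StationaryAGP}, then $w=\rho^{-1}u$ satisfies a PDE of the form
\begin{equation*}
-Hw-2\nabla Y\cdot\nabla w-xY\cdot xw-\wick{|\nabla Y|^2}w-\lambda\rho^{2\gamma}|w|^{2\gamma}w+\omega w=0,
\end{equation*}
whose principal part is the Hermite operator. Starting from $w\in\mathrm{D}(\sqrt{|H|})$ (which follows from $u\in\D^{1,2}$ since multiplication by $\rho^{\pm1}$ is an isomorphism on the relevant spaces), I would bootstrap using the two-derivative gain of $H^{-1}$ on the scale $\W^{s,q}$, the multiplication estimates for the coefficients $\nabla Y$, $xY$, $\wick{|\nabla Y|^2}$ (all of controlled and, in $d=2$, renormalised regularity), and the fact that once $w\in\L^\infty$ the power nonlinearity $|w|^{2\gamma}w$ only improves regularity. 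Iteratively one reaches $w\in\W^{2,q}$ for every $q\in[2,+\infty)$; going back to $u=\rho w$, the regularity is then dictated by $\rho=\e^Y\in\W^{2-d/2-,q}$.

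\textbf{Sharpness.} Argue by contradiction: suppose $u\in\H^{2-d/2,q}(U)$ on some nonempty open $U\subset\R^d$ and $u\not\equiv 0$ on $U$. Choosing $q$ large enough so that $2-d/2>d/q$, $u$ is continuous on $U$, so there exists $x_0\in U$ with $u(x_0)\neq 0$; consequently $w(x_0)\neq 0$. On a neighbourhood $V\ni x_0$ on which $|w|\geqslant c>0$, write $\rho=u/w$. Since $\H^{2-d/2,q}(V)$ is an algebra for $q$ large and remains stable under division by functions bounded away from zero, $\rho\in\H^{2-d/2,q}(V)$; composition with $\log$ on a compact subset of $(0,+\infty)$ preserves this regularity, so $Y=\log\rho\in\H^{2-d/2,q}(V)$. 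From $HY=-\xi$ one then deduces $\xi=-\Delta Y+|x|^2Y\in\H^{-d/2,q}(V)$, contradicting the almost-sure property that a realisation of the spatial white noise does not lie in $\H^{-d/2,q}$ on any nonempty open set.

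\textbf{Main obstacle.} The delicate step is the bootstrap in dimension $2$: the coefficient $\wick{|\nabla Y|^2}$ is a genuine distribution of negative regularity and the product $\wick{|\nabla Y|^2}w$ only makes sense once $w$ has acquired sufficient positive regularity, which is itself what the bootstrap is meant to produce. This forces reliance on the paracontrolled decomposition underlying the renormalised operator $A$ recalled in \cref{Sec-AndersonOp2d}, rather than on a purely classical PDE bootstrap. A second minor difficulty is to make sure that the multiplication/division and composition with $\log$ used in the sharpness part are applied in the right local Sobolev scale and are compatible with the exponential transform performed globally on $\R^d$.
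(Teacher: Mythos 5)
Your overall strategy coincides with the paper's: the upper bound comes from the exponential transform and the elliptic gain of $H^{-1}$ applied to the equation for $w=\rho^{-1}u$, and the sharpness comes from extracting the local regularity of $Y$ from that of $u$ and contradicting an almost-sure non-membership of $\xi$ in $\H^{-d/2}$ on balls (which the paper gets from Sazonov's theorem together with a countable intersection over rational balls -- a point you should make explicit so that a single full-measure event works for all open sets simultaneously). Two remarks on where your write-up diverges. First, your ``main obstacle'' is not actually an obstacle in this setting: since $u\in\mathrm{D}(A)$, the domain characterizations (\cref{Prop-LocDomain1d,Prop-CaracDs2}) already give $w\in\W^{2-,2}\subset\W^{2-\frac d2-,q}$, and at that regularity every product in the equation, including $\wick{|\nabla Y|^2}\,w$ with $\wick{|\nabla Y|^2}\in\W^{0-,\infty}$, is classically well defined by \cref{Lem-ProductRule,Cor-ProductRuleNegPosReg}; a single application of $H^{-1}$ then yields $w\in\W^{3-\frac d2-,q}$ (note: not $\W^{2,q}$ -- the term $\nabla Y\cdot\nabla w$ caps the gain at $3-\frac d2-$, which is immaterial since $\rho$ caps $u$ at $2-\frac d2-$ anyway). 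No iteration and no paracontrolled decomposition is needed.

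Second, and this is the step that needs repair: in the sharpness part you cannot ``choose $q$ large'', since $q\in[2,+\infty]$ is part of the hypothesis, and the hardest case is $q=2$ in $d=2$, where $\H^{1}(\R^2)$ is neither an algebra nor stable under the composition with $\log$ in the naive sense you invoke. (Continuity of $u$ is not the issue -- it already follows from the first part globally.) The repair is to use the surplus regularity of $w\in\W^{3-\frac d2-,q'}$ for all $q'$: the division $\rho=u/w$ and the identity $\nabla Y=\nabla\rho/\rho$ then only require Leibniz plus H\"older against functions in $\mathcal C^{1-}\cap\L^\infty$ with bounded inverses, which is exactly how the paper proceeds -- it differentiates $u=\rho v$ once ($u'=\rho(v'+Y'v)$ in $d=1$, $\nabla u=(v\nabla Y+\nabla v)\rho$ in $d=2$) and isolates $\nabla Y\in\H^{1-\frac d2}$ locally directly, avoiding the composition with $\log$ altogether. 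With that adjustment your argument closes, and it buys nothing over the paper's more elementary one-derivative extraction.
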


\section{Preliminaries}\label{Sec-Prelim}

\subsection{Notations and conventions}

\begin{itemize}
    \item $\N=\{0,1,2,\dotsc\}$ and $\N^*=\{1,2,\dotsc\}$
    \item $d\in\N$ is the space dimension
    \item $\Delta = \sum_{j=1}^d \p_{x_j}^2$ is the laplacian
    \item For $x\in\mathbb{R}^d$, $|x|^2=\sum_{j=1}^d x_j^2$ and $\langle x\rangle^2 = 1+|x|^2$
    \item For $\alpha\in\N^d$, $|\alpha|=\sum_{j=1}^d\alpha_j$ and $\p^\alpha = \prod_{j=1}^d\p_{x_j}^{\alpha_j}$. We write $\alpha\leqslant\beta$ of $\alpha_j\leqslant \beta_j$ for all $1\leqslant j\leqslant d$.
    \item Let $E\subset\R^d$, then $\indic{E}:\R^d\to\{0,1\}$ denotes the characteristic function of $E$.
    \item For $p\in[1,+\infty)$ and a borelian $U\subset\R^d$, $\L^p(U)$ denotes the set of measurable functions $f:U\to\C$ such that $|f|^p$ is Lebesgue-integrable on $U$, modulo almost everywhere equality. Likewise, $\L^\infty(U)$ denotes the set of function bounded almost everywhere on $U$, modulo almost everywhere equality. When $p=2$, we endow $\L^2(U)$ with the inner product
    $$\forall f,g\in\L^2(U),\; (f,g)_{\L^2}=\re\int_U f(x) \overline{g(x)}\d x.$$ 
    Moreover, for $p\in[1,+\infty]$, $\L^p_{loc}(U)$ denotes the set of measurable functions $f$ on $U$, such that for any compact set $K\subset U$, $f\indic{K}\in\L^p(U)$.
    \item Let $E$ be a Banach space, $p\in[1,+\infty]$ and $U\subset\R^d$ be a borelian. Then $\L^p(U,E)$ denotes the set of measurable functions $f:U\to E$ such that $|f|_E\in\L^p(U)$.
    \item We denote by $\mathcal{C}^\infty_c(\R^d)$ the space of smooth and compactly supported functions on $\R^d$ and by $\D'(\R^d)$ its dual, that is the space of distributions on $\R^d$, endowed with the real bracket,  
    $$\forall T\in\L^1_{loc}(\mathbb{R}^d),\forall \phi\in\mathcal{C}^\infty_c(\mathbb{R}^d), \langle T,\phi\rangle_{\D'} = \re\left(\int_{\mathbb{R}^d} T(x) \overline{\phi(x)}\d x\right).$$
    \item We denote by $\mathcal{S}(\R^d)$ the Schwartz space on $\R^d$ and $\mathcal{S}'(\R^d)$ its dual, that is the space of tempered distributions on $\R^d$.
    \item For $s\in\R$ and $p\in(1,+\infty)$, the Sobolev spaces are defined as
    \[
    \mathrm{H}^{s,p}=\left\{u \in\mathcal{S}'(\mathbb{R}^d), \mathcal{F}^{-1}\left(\langle \eta \rangle^s \mathcal{F}u\right)\in\L^p(\mathbb{R}^d)\right\}
    \]
    endowed with the norm
    \[
    |u|_{\mathrm{H}^{s,p}_x} =  |\mathcal{F}^{-1}\left(\langle \eta \rangle^s \mathcal{F}u\right)|_{\L^p_x},
    \]
    where $\mathcal{F}$ is the Fourier transform. For $p\in\{1,+\infty\}$ and $k\in\N$, they are defined as
    \[
    \mathrm{H}^{k,p}=\left\{u\in\L^p(\R^d),\; \forall \alpha\in\N^d,\; |\alpha|\leqslant k \Rightarrow \p^\alpha u\in\L^p(\R^d)\right\}
    \]
    and extended by complex interpolation for $s\geqslant 0$.
    We also use the notation $\mathrm{H}^s=\mathrm{H}^{s,2}$.
    \item Let $E$ be a Banach space, we write $|\cdot|_E$ its norm, $\mathcal{L}(E)$ the set of continuous endomorphisms on $E$, and for $F$ another Banach space, $\mathcal{L}(E,F)$ the set of bounded linear maps from $E$ to $F$. In particular, the dual of $E$, defined by $\mathcal L(E,\R)$, is denoted $E'=\mathcal{L}(E,\R)$, and the duality bracket is defined as follows:
    \[
    \forall e\in E,\; \forall f\in E',\; \langle f,e\rangle_{E',E}=f(e).
    \]
    \item We write $A\lesssim B$ if there exists a constant $C>0$ independent of $A$ and $B$ such that $A\leqslant C B$. We write $A\approx B$ if $A\lesssim B$ and $B\lesssim A$. If $C$ depends on a parameter $\theta$, we may write $A\lesssim_\theta B$.
\end{itemize}

\subsection{Hermite-Sobolev spaces}\label{Sec-ConfiningSobolev}

We denote by $H=\Delta-|x|^2$ the Hermite operator. It is well-known that this operator has eigenfunctions $(h_k)_{k\in\N}$ verifying the relation $-H h_k = \lambda_k^2 h_k$ where $\lambda_k^2\sim ck^{\frac{1}{d}}$. Moreover, $(h_k)_{k\in\N}$ is a complete orthonormal system of $\L^2(\mathbb{R}^2,\R)$.\\

Let $s\in\R$ and $p\in(1,+\infty)$ and define
$$\W^{s,p} =\left\{u\in\S'(\mathbb{R}^2), (-H)^{\frac{s}{2}}u\in\L^p(\mathbb{R}^2)\right\}$$
endowed with the norm $$|u|_{\W^{s,p}_x} = |(-H)^{\frac{s}{2}}u|_{\L^p_x}.$$
It is known that these spaces are Banach spaces and that for $q$ such that $\frac{1}{p}+\frac{1}{q}=1$, we have $\left(\W^{s,p}\right)'=\W^{-s,q}$ with equal norm. Moreover, the subspace $\vspan\left\{h_k, k\in\N\right\}$ is dense in $\W^{s,q}$-spaces (see for example \cite{BongioanniHSspaces} for the case $s\geqslant0$) and for $s\geqslant0$, an equivalent norm is given for $p\in(1,+\infty)$ by (see \cite{Dziubanski})
$$|u|_{\W^{s,p}}\approx|u|_{\H^{s,p}_x}+|\langle x\rangle^s u|_{\L^p_x}.$$
This shows $\W^{s,p}=\left\{u\in \H^{s,p}(\mathbb{R}^2), \langle x\rangle^s u\in\L^p(\mathbb{R}^2)\right\}.$ Using the norm equivalence, we define $\W^{2,p}$ with $p\in\{1,+\infty\}$ as
$$\W^{2,p} = \left\{u\in \H^{2,p}(\mathbb{R}^2), \langle x\rangle^{2} u\in\L^p(\mathbb{R}^2)\right\}$$
endowed with the norm
$$|u|_{\W^{2,p}_x} = |u|_{\H^{2,p}_x}+|\langle x\rangle^{2} u|_{\L^p_{x}}$$
and extend this definition to $s\in[0,2]$ by complex interpolation. Moreover inspired by the relation $\left(\W^{s,p}\right)'=\W^{-s,q}$, for $s\in[-2,0)$, we define $\W^{s,\infty} = \left(\W^{-s,1}\right)'$. As usual, we cannot define spaces of negative $\L^1$ regularity as dual spaces of positive $\L^\infty$ regularity.

\begin{rem}{~}
    We do not claim that the first definition of $\W^{s,p}$ agrees with the one we give for spaces over $\L^1$ and $\L^\infty$. We use the same notation for simplicity as we will always control $\W^{s,\infty}$ norms by Sobolev embeddings.
\end{rem}

The choices we made ensure us that we can interpolate between spaces of positive regularity and some other convenient properties that we need in our analysis. For example, our choices allow us to have a simple product rule on our spaces, whose proof follows from Leibniz rule, Hölder inequality and complex bilinear interpolation. We start with product rules, recalling an important estimate going back to Kato and Ponce \cite{KatoPonce}. 
\begin{prop}{(Theorem 1.4 in \cite{GulisashviliKon96})} \label[prop]{Prop-Kato-Ponce}
    Let $r\in(1,+\infty)$ and $s\geqslant 0$. For any $1<p_1,q_1,p_2,q_2\leqslant +\infty$ with $\frac{1}{r}=\frac{1}{p_j}+\frac{1}{q_j}$ ($j\in\{1,2\}$), there exists $C>0$ such that, for any $f\in \mathrm{H}^{s,p_1}\cap\L^{p_2}$ and any $g\in \mathrm{H}^{s,q_2}\cap\L^{q_1}$, it holds
    $$|fg|_{\mathrm{H}^{s,r}}\leqslant C\left(|f|_{\mathrm{H}^{s,p_1}}|g|_{\L^{q_1}}+|f|_{\L^{p_2}}|g|_{\mathrm{H}^{s,q_2}}\right).$$    
\end{prop}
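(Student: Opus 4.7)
The plan is to prove this via a Littlewood-Paley decomposition combined with Bony's paraproduct, which is the standard modern route to Kato-Ponce type inequalities. Let $(\Delta_j)_{j\geqslant -1}$ be dyadic frequency projectors on $\R^d$, and $S_j=\sum_{k\leqslant j-1}\Delta_k$ the low-frequency truncations. For $r\in(1,+\infty)$ and $s\geqslant 0$, the Littlewood-Paley characterization gives $|u|_{\mathrm{H}^{s,r}}\approx \bigl|(\sum_j 2^{2js}|\Delta_j u|^2)^{1/2}\bigr|_{\L^r}$. The first step is to write $fg=T_f g+T_g f+R(f,g)$, where $T_f g=\sum_j S_{j-3}f\cdot\Delta_j g$ is the low-high paraproduct, $T_g f$ is its symmetric counterpart, and $R(f,g)=\sum_j \Delta_j f\cdot\widetilde{\Delta}_j g$ with $\widetilde{\Delta}_j=\sum_{|k-j|\leqslant 2}\Delta_k$ is the high-high remainder.

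For $T_f g$, each term $S_{j-3}f\cdot\Delta_j g$ has spectrum in an annulus of size $\sim 2^j$, so $\Delta_k(T_f g)$ vanishes unless $|k-j|\leqslant 4$ and the square-function norm reduces to estimating $\bigl|(\sum_j 2^{2js}|S_{j-3}f|^2 |\Delta_j g|^2)^{1/2}\bigr|_{\L^r}$. Using the pointwise bound $\sup_j |S_{j-3}f|\lesssim Mf$ with $M$ the Hardy-Littlewood maximal function, followed by Hölder with exponents $(p_2,q_2)$ and the Fefferman-Stein vector-valued maximal inequality (valid because $p_2\in(1,+\infty]$ and the standard $\L^{p_2}$-boundedness of $M$), yields $|T_f g|_{\mathrm{H}^{s,r}}\lesssim |f|_{\L^{p_2}}|g|_{\mathrm{H}^{s,q_2}}$; the symmetric argument produces $|T_g f|_{\mathrm{H}^{s,r}}\lesssim|f|_{\mathrm{H}^{s,p_1}}|g|_{\L^{q_1}}$. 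For the remainder $R(f,g)$, each term has spectrum contained in a ball of radius $\lesssim 2^j$, so for a fixed dyadic block $\Delta_k$ only the terms with $j\geqslant k-C$ contribute; absorbing the factor $2^{ks}$ by inserting $2^{(k-j)s}2^{js}$ (with $s\geqslant 0$, so $2^{(k-j)s}\leqslant 1$) and applying the vector-valued Hölder inequality once more gives the desired bound.

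The main obstacle is the endpoint behavior when $p_2=+\infty$ or $q_1=+\infty$: the maximal function argument must be replaced by the direct estimate $|S_{j-3}f|_{\L^\infty}\leqslant |f|_{\L^\infty}$, which suffices pointwise but must be carefully propagated through the square-function inequality. A secondary subtlety is that the $r=1$ and $r=+\infty$ endpoints are excluded precisely because the square-function characterization and the vector-valued maximal inequality both fail there. Since these endpoint refinements are exactly the content of Theorem~1.4 in \cite{GulisashviliKon96}, which handles the full range $1<p_i,q_i\leqslant+\infty$ with $r\in(1,+\infty)$ by a careful paraproduct analysis along these lines, the cleanest presentation is to quote that result directly rather than reproduce the endpoint bookkeeping.
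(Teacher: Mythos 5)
The paper offers no proof of this proposition at all: it is imported verbatim as Theorem~1.4 of \cite{GulisashviliKon96}, and your proposal ultimately lands in the same place by deferring the endpoint bookkeeping to that reference. Your Littlewood--Paley/paraproduct sketch is the standard and essentially correct route to such Kato--Ponce estimates (the decomposition, the maximal-function bound on the paraproducts, and the $2^{(k-j)s}$ summation for the remainder are all sound), so there is nothing to object to.
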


The proof of \cref{Lem-ProductRule,Cor-ProductRuleNegPosReg,Prop-action_d/dx,Sobolev-embeddings} below are given in \cite{Mackowiak_2025}.

\begin{lemme}\label[lemme]{Lem-ProductRule}
    Let $s\geqslant0$ and $1\leqslant p,q,r\leqslant+\infty$ such that $\frac{1}{p}+\frac{1}{q}=\frac{1}{r}$. There exists a constant $C>0$ such that for all $u\in\W^{s,p}$ and $v\in \mathrm{H}^{s,q}$, we have $uv\in\W^{s,r}$ and $|uv|_{\W^{s,r}}\leqslant C |u|_{\W^{s,p}}|v|_{\mathrm{H}^{s,q}}$.   
\end{lemme}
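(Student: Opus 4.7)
The plan is to reduce the estimate to the two constituent pieces of the equivalent norm
$$|w|_{\W^{s,p}}\;\approx\;|w|_{\H^{s,p}}+|\langle x\rangle^s w|_{\L^p},$$
which holds for $s\geqslant 0$ in the ranges of $p$ where $\W^{s,p}$ is defined: by the cited theorem of Dziubanski for $p\in(1,+\infty)$, and by construction (first at $s=2$, then via complex interpolation) for $p\in\{1,+\infty\}$. Granted this reduction, it suffices to prove separately
$$|uv|_{\H^{s,r}}\;\lesssim\;|u|_{\W^{s,p}}|v|_{\H^{s,q}}\qquad\text{and}\qquad|\langle x\rangle^s uv|_{\L^r}\;\lesssim\;|u|_{\W^{s,p}}|v|_{\H^{s,q}}.$$

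The weighted piece is immediate: since $\langle x\rangle^s$ depends only on $x$, Hölder's inequality with $\tfrac{1}{p}+\tfrac{1}{q}=\tfrac{1}{r}$ gives
$$|\langle x\rangle^s uv|_{\L^r}\;\leqslant\;|\langle x\rangle^s u|_{\L^p}|v|_{\L^q}\;\lesssim\;|u|_{\W^{s,p}}|v|_{\H^{s,q}},$$
using the trivial embedding $\H^{s,q}\hookrightarrow \L^q$ for $s\geqslant 0$.

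For the Sobolev piece in the interior case $p,q,r\in(1,+\infty)$, I apply Kato--Ponce (\cref{Prop-Kato-Ponce}) with $(p_1,q_1)=(p_2,q_2)=(p,q)$ to get
$$|uv|_{\H^{s,r}}\;\lesssim\;|u|_{\H^{s,p}}|v|_{\L^q}+|u|_{\L^p}|v|_{\H^{s,q}}\;\lesssim\;|u|_{\W^{s,p}}|v|_{\H^{s,q}},$$
which concludes this case.

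The boundary cases, where at least one of $p,q,r$ equals $1$ or $+\infty$, are not covered by \cref{Prop-Kato-Ponce}. In these cases the spaces $\W^{s,p}$ are only defined for $s\in[0,2]$, so I first treat the integer endpoints $s\in\{0,1,2\}$ by hand: $s=0$ is Hölder, while for $s\in\{1,2\}$ the Leibniz rule yields $\partial^\alpha(uv)=\sum_{\beta\leqslant\alpha}\binom{\alpha}{\beta}\partial^\beta u\,\partial^{\alpha-\beta}v$, and term-by-term Hölder gives the bound provided we absorb each missing derivative into the weight $\langle x\rangle^s$ when necessary, using the characterization of $\W^{s,p}$ as $\H^{s,p}\cap\langle x\rangle^{-s}\L^p$. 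The non-integer values $s\in(0,2)$ are then recovered by complex bilinear interpolation between these integer endpoints, which is compatible by construction with the interpolatory definition of $\W^{s,p}$ and $\H^{s,p}$ at boundary $p$. The main delicate point of the argument is precisely this bilinear interpolation step at the endpoints, which requires Calder\'on's bilinear complex interpolation theorem together with the density of Schwartz functions in all spaces involved; once established, it closes the proof uniformly in all admissible $(s,p,q,r)$.
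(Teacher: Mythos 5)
Your proposal is correct and follows essentially the route the paper indicates: the paper defers the detailed proof to \cite{Mackowiak_2025} but states explicitly that it ``follows from Leibniz rule, H\"older inequality and complex bilinear interpolation,'' recalling the Kato--Ponce estimate immediately beforehand, and your argument (splitting the norm into its Hermite--Sobolev and weighted pieces, applying Kato--Ponce and H\"older in the interior range, and handling boundary exponents by Leibniz at integer $s$ plus bilinear complex interpolation) fleshes out exactly that sketch. The only caution is that the asserted norm equivalence $|w|_{\W^{s,p}}\approx|w|_{\H^{s,p}}+|\langle x\rangle^s w|_{\L^p}$ at non-integer $s$ for $p\in\{1,+\infty\}$ is not really ``by construction'' (interpolation of an intersection need not equal the intersection of the interpolations), but your boundary-case argument bypasses it by interpolating the bilinear map directly, so nothing essential rests on it.
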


\begin{cor}\label[cor]{Cor-ProductRuleNegPosReg}
    Let $s\geqslant0$, $1<p<+\infty$ and $1\leqslant q,r\leqslant+\infty$ such that $1-\frac{1}{p}+\frac{1}{q}=1-\frac{1}{r}$. There exists a constant $C>0$ such that for all $u\in\W^{-s,r}$ and $v\in \mathrm{H}^{s,q}$, we have $uv\in\W^{-s,p}$ and $|uv|_{\W^{-s,p}}\leqslant C |u|_{\W^{-s,r}}|v|_{\mathrm{H}^{s,q}}$.
\end{cor}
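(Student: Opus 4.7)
The approach is duality: since $1<p<+\infty$, the paper's definitions give the identification $\W^{-s,p}=(\W^{s,p'})'$, where $p'$ denotes the Hölder conjugate of $p$. The plan is to define $uv$ as a continuous linear functional on $\W^{s,p'}$ by testing against Hermite combinations, and to bound its norm through the positive-regularity product rule of \cref{Lem-ProductRule}.

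Taking conjugates in $1-\tfrac{1}{p}+\tfrac{1}{q}=1-\tfrac{1}{r}$ yields $\tfrac{1}{p'}+\tfrac{1}{q}=\tfrac{1}{r'}$, which is exactly the scaling relation under which \cref{Lem-ProductRule} provides a continuous bilinear map $\W^{s,p'}\times\H^{s,q}\to\W^{s,r'}$. Moreover, the assumption $p>1$ combined with $\tfrac{1}{p}=\tfrac{1}{q}+\tfrac{1}{r}$ forces $r>1$, so $r'\in[1,+\infty)$ and the duality bracket $\W^{-s,r}\times\W^{s,r'}\to\R$ is available (in the endpoint $r=+\infty$ one has $r'=1$, and $\W^{-s,\infty}$ is defined in the paper precisely as $(\W^{s,1})'$).

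Concretely, for $\phi\in\vspan\{h_k,\;k\in\N\}\subset\W^{s,p'}$, set
\[
\langle uv,\phi\rangle := \langle u,v\phi\rangle_{\W^{-s,r},\W^{s,r'}}.
\]
The positive product rule gives $v\phi\in\W^{s,r'}$ with $|v\phi|_{\W^{s,r'}}\lesssim|v|_{\H^{s,q}}|\phi|_{\W^{s,p'}}$, hence
\[
|\langle uv,\phi\rangle|\leqslant |u|_{\W^{-s,r}}|v\phi|_{\W^{s,r'}}\lesssim|u|_{\W^{-s,r}}|v|_{\H^{s,q}}|\phi|_{\W^{s,p'}}.
\]
Since Hermite combinations are dense in $\W^{s,p'}$, this extends uniquely to a continuous linear form on $\W^{s,p'}$, identified via duality with an element of $\W^{-s,p}$ whose norm is bounded by $C|u|_{\W^{-s,r}}|v|_{\H^{s,q}}$, which is precisely the announced estimate.

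The only delicate point I anticipate is the consistency of this abstractly defined product with the pointwise product whenever both make classical sense; this is handled by approximating $u$ by its Hermite partial sums $u_N=\sum_{k\leqslant N}(u,h_k)h_k$, which are smooth so that $u_Nv$ makes classical sense, and applying the just-proved estimate to the differences $u_N-u_M$ to pass to the limit. Since all the analytic work is already encapsulated in \cref{Lem-ProductRule} and in the duality statement recalled in \cref{Sec-ConfiningSobolev}, I expect no further obstacle.
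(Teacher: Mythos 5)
Your duality argument is correct and is the expected proof: the exponent bookkeeping $\tfrac{1}{p'}+\tfrac{1}{q}=\tfrac{1}{r'}$ is right, the assumption $p>1$ does force $r>1$, \cref{Lem-ProductRule} applies to $v\phi$, and the identification $\W^{-s,p}=(\W^{s,p'})'$ (with $\W^{-s,\infty}=(\W^{s,1})'$ at the endpoint $r=+\infty$) closes the estimate; note that the paper itself gives no proof of this corollary and defers to \cite{Mackowiak_2025}. The only reservation concerns your consistency step: sharp Hermite partial-sum projections are not uniformly bounded on $\L^r$ outside a restricted range of exponents, so $u_N=\sum_{k\leqslant N}(u,h_k)h_k$ need not converge to $u$ in $\W^{-s,r}$; a smooth spectral cutoff $\chi(-H/N)u$ (bounded on these spaces by Hermite multiplier theorems) repairs this, and in any case the duality formula is the natural definition of the product here, so this step is inessential to the stated estimate.
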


On $\W^{s,q}$ spaces, derivation and multiplication by a power of $\langle x\rangle$ act directly on the regularity exponent.

\begin{prop}(see Corollary 2.5 in \cite{Mackowiak_2025})  \label[prop]{Prop-action_d/dx}
    Let $s\in\R$, $j\in\{1,2\}$ and $q\in(1,+\infty)$, then $\p_j,x_j\in\mathcal{L}(\W^{s,q},\W^{s-1,q})$.
    
\end{prop}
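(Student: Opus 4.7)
The plan is to reduce the statement to the annihilation and creation operators $A_j^\pm = x_j \mp \p_j$, which satisfy the commutation relations $[H, A_j^\pm] = \pm 2 A_j^\pm$. Since $\p_j = \tfrac{1}{2}(A_j^- - A_j^+)$ and $x_j = \tfrac{1}{2}(A_j^- + A_j^+)$, it suffices to show that each $A_j^\pm : \W^{s,q} \to \W^{s-1,q}$ is bounded; the commutation relation expresses that on the Hermite basis $(h_\alpha)$, $A_j^\pm$ acts as a ladder operator shifting the eigenvalue of $-H$ by $\mp 2$, so on the scale $\W^{s,q}$ these operators are morally of order $1$.

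I would first treat the case $s \geqslant 0$ using the norm equivalence
\[
|u|_{\W^{s,p}}\approx|u|_{\H^{s,p}_x}+|\langle x\rangle^s u|_{\L^p_x}
\]
recalled in the preliminaries. The pure Sobolev contribution is the easier piece: $|\p_j u|_{\H^{s-1,q}}\leqslant|u|_{\H^{s,q}}$ is immediate, while $|x_j u|_{\H^{s-1,q}}$ is controlled for integer $s$ by the Leibniz rule (which produces terms of the form $x_j\p^\alpha u$, bounded by $|\langle x\rangle \p^\alpha u|_{\L^q}$, together with lower-order contributions), and extended to real $s$ by complex interpolation. For the weighted-$\L^q$ contribution, one writes $\langle x\rangle^{s-1}\p_j u = \p_j(\langle x\rangle^{s-1}u) - (\p_j\langle x\rangle^{s-1})u$ and uses $|\p_j \langle x\rangle^{s-1}|\lesssim\langle x\rangle^{s-2}$, whereas $|\langle x\rangle^{s-1} x_j u|_{\L^q}\leqslant|\langle x\rangle^s u|_{\L^q}$ is trivial from $|x_j|\leqslant\langle x\rangle$. \cref{Prop-Kato-Ponce} handles the products that arise for non-integer $s$.

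Once the statement holds for $s\geqslant 0$, the case $s<0$ follows by duality. Since $(\W^{s,q})'=\W^{-s,q'}$ with $\frac{1}{q}+\frac{1}{q'}=1$, and $\vspan\{h_k,\; k\in\N\}$ is dense in each $\W^{s,q}$, the identities $\langle \p_j u,\phi\rangle = -\langle u,\p_j\phi\rangle$ and $\langle x_j u,\phi\rangle = \langle u,x_j\phi\rangle$ (valid on test functions in the Hermite span) transport a bound $\p_j, x_j : \W^{1-s,q'}\to\W^{-s,q'}$ (with $1-s>0$, falling in the range already treated) to the bound $\p_j, x_j : \W^{s,q}\to\W^{s-1,q}$.

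The main obstacle is the estimate $|x_j u|_{\H^{s-1,q}}\lesssim |u|_{\W^{s,q}}$ for non-integer $s\in(0,1)$: \cref{Prop-Kato-Ponce} does not apply directly because $x_j$ does not live in any Sobolev space of positive regularity on $\R^d$. The cleanest workaround is to establish the integer endpoints by hand ($s=0$ is trivial, and for $s=1$ one has $|x_j u|_{\L^q}\leqslant|\langle x\rangle u|_{\L^q}\lesssim|u|_{\W^{1,q}}$, while higher integers follow by iterated Leibniz) and then invoke that $\{\W^{s,q}\}_s$ is a complex interpolation scale (inherited from the spectral decomposition by $-H$) to fill in real $s\geqslant 0$.
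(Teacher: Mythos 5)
First, note that the paper itself does not prove this proposition: it is quoted from Corollary~2.5 of \cite{Mackowiak_2025}, so your argument has to stand on its own merits. Your overall architecture --- establish integer $s$ by hand, fill in real $s\geqslant 0$ by complex interpolation of the scale $(\W^{s,q})_s$, and reach $s<0$ by duality through $(\W^{s,q})'=\W^{-s,q'}$ and the density of the Hermite span --- is sound, and the duality and interpolation steps are fine as written. The gap is in the integer cases $s=k\geqslant 2$, which is where the actual content lives. Writing $|u|_{\W^{k,q}}\approx|u|_{\H^{k,q}}+|\langle x\rangle^k u|_{\L^q}$, what you must prove is a family of \emph{mixed} bounds $|\langle x\rangle^a\p^\alpha u|_{\L^q}\lesssim|u|_{\W^{k,q}}$ for $a+|\alpha|\leqslant k$ with both $a\geqslant 1$ and $|\alpha|\geqslant 1$ (already at $k=2$ you need $|\langle x\rangle\p_j u|_{\L^q}$ and $|x_j\p_k u|_{\L^q}$); these are controlled by neither of the two pure pieces of the equivalent norm. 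Your manipulations only shuffle them around: the identity $\langle x\rangle^{s-1}\p_j u=\p_j(\langle x\rangle^{s-1}u)-(\p_j\langle x\rangle^{s-1})u$ trades the desired bound for a bound on $|\langle x\rangle^{s-1}u|_{\H^{1,q}}$, which is a mixed norm of the same type, and the Leibniz expansion of $\p^\alpha(x_j u)$ produces exactly the terms $|x_j\p^\alpha u|_{\L^q}$ that you declare ``bounded by $|\langle x\rangle\p^\alpha u|_{\L^q}$'' --- true, but that quantity is again mixed and is not part of $|u|_{\W^{k,q}}$. So ``higher integers follow by iterated Leibniz'' is circular as stated. (By contrast, the difficulty you single out for non-integer $s\in(0,1)$ is not where the problem is: there the target index $s-1$ is negative, the norm equivalence does not apply to the target anyway, and that range is covered by interpolating between the $s=0$ and $s=1$ statements.)

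The fix is the route you sketch in your opening paragraph and then abandon. Work with $A_j^\pm=x_j\mp\p_j$ throughout: the $\L^q$-boundedness of the Hermite--Riesz transforms $A_j^\pm(-H)^{-1/2}$ for $q\in(1,+\infty)$ (Thangavelu, Stempak--Torrea) gives the case $s=1$, and the exact intertwining $(-H)^{\sigma}A_j^{\pm}=A_j^{\pm}(-H\pm2)^{\sigma}$ on the Hermite basis, combined with a spectral multiplier theorem showing that $(-H\pm2)^{\sigma}(-H)^{-\sigma}$ is bounded on $\L^q$ (the annihilation operator kills the bottom modes, so no spectral singularity occurs), transports it to every $s\in\R$ in one stroke --- with no need for the norm equivalence, the mixed estimates, or a separate duality argument. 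If you prefer to keep your structure, you must at least isolate and prove the mixed bounds at integer $k$ as a genuine lemma: for $q=2$ an integration by parts suffices, but for general $q\in(1,+\infty)$ you are led back to the Riesz transforms in any case.
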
   

The following corollary follows from \cref{Prop-action_d/dx} by Stein's complex interpolation method \cite{SteinInterpolation,VoigtAbstractInterpolation}.

\begin{cor}\label[cor]{Cor-action_V}

    Let $\alpha\geqslant 0$, $s\in\R$ and $q\in(1,+\infty)$. Then, $\langle x\rangle^{\alpha}\in\mathcal{L}(\W^{s,q},\W^{s-\alpha,q})$.
    
\end{cor}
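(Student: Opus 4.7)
The plan is to apply Stein's complex interpolation theorem to the analytic family of multiplications $T_z u:=\langle x\rangle^z u = \e^{z\log\langle x\rangle}u$, as suggested by the hint. Fix $\alpha\geqslant 0$ and choose an even integer $2n\geqslant\alpha$.

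The first step is the integer case $\alpha=2n$. Expanding $\langle x\rangle^{2n}=(1+|x|^2)^n=\sum_{|\beta|\leqslant 2n}c_\beta\,x^\beta$ as a polynomial in $x_1,\dotsc,x_d$ and iterating \cref{Prop-action_d/dx}, each monomial $x^\beta$ maps $\W^{s,q}$ into $\W^{s-|\beta|,q}\hookrightarrow\W^{s-2n,q}$, so $\langle x\rangle^{2n}\in\mathcal{L}(\W^{s,q},\W^{s-2n,q})$ for every $s\in\R$ and $q\in(1,+\infty)$.

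The second step sets up Stein's machinery on the strip $S=\{z\in\C:0<\re(z)<2n\}$. For $u,v$ in $\vspan\{h_k:k\in\N\}$ (dense in $\W^{s,q}$ and in its dual $\W^{\alpha-s,q'}$, where $1/q+1/q'=1$), the scalar $\langle T_z u,v\rangle$ is holomorphic in $S$ and continuous and admissibly bounded on $\overline{S}$. The endpoint bounds to verify are: on $\re(z)=0$, the unimodular multiplier $\langle x\rangle^{\complexI y}$ acts boundedly on $\W^{s,q}$ with norm at most polynomial in $|y|$; on $\re(z)=2n$, writing $\langle x\rangle^{2n+\complexI y}=\langle x\rangle^{\complexI y}\langle x\rangle^{2n}$ and composing with step~1 yields a bound $\W^{s,q}\to\W^{s-2n,q}$, again polynomial in $|y|$. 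Granting these, Stein's theorem, combined with the interpolation identity $[\W^{s,q},\W^{s-2n,q}]_\theta=\W^{s-2n\theta,q}$ (obtained by viewing the Hermite-Sobolev scale as the isometric images of $\L^q$ under the analytic family of powers $(-H)^{-s/2}$ and interpolating in the parameter), yields $T_\alpha\in\mathcal{L}(\W^{s,q},\W^{s-\alpha,q})$ at $\theta=\alpha/(2n)$.

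The hard part is the endpoint at $\re(z)=0$: one has to show that $\langle x\rangle^{\complexI y}$ preserves $\W^{s,q}$ with polynomial growth in $|y|$ for every real~$s$. I would reduce this to $\L^q$-boundedness of the conjugate $(-H)^{s/2}\langle x\rangle^{\complexI y}(-H)^{-s/2}$, control the commutator through \cref{Prop-Kato-Ponce} applied to the derivatives $\p^\beta\langle x\rangle^{\complexI y}=P_\beta(x,\complexI y)\langle x\rangle^{\complexI y-|\beta|}$, where $P_\beta$ is a polynomial of degree $|\beta|$ in $\complexI y$ with coefficients of controlled $x$-growth, and extend to fractional $s$ by one more round of complex interpolation in~$s$. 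Once this endpoint bound is in hand, the corollary follows at once.
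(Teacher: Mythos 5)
Your proposal is correct and is essentially the paper's own argument: the paper obtains this corollary precisely by combining the integer case, which follows from iterating \cref{Prop-action_d/dx}, with Stein's complex interpolation method on the analytic family $z\mapsto\langle x\rangle^{z}$. The only remark is that the endpoint bound for the unimodular multipliers $\langle x\rangle^{\complexI y}$ is obtained more directly from \cref{Lem-ProductRule,Cor-ProductRuleNegPosReg} (taking $v=\langle x\rangle^{\complexI y}\in\H^{\sigma,\infty}$, whose norm is $O\left((1+|y|)^{\sigma}\right)$) than from the commutator computation you sketch, but either route closes the argument.
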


The Sobolev-Hermite spaces verify some continuous embeddings as in the classical Sobolev framework. We will also refer to these embeddings as Sobolev embeddings.

\begin{prop}{(Sobolev embeddings)}\label[prop]{Sobolev-embeddings}
    Let $1< p\leqslant q<+\infty$ and $s>\sigma$ such that $\frac{1}{p}-\frac{s}{2}\leqslant\frac{1}{q}-\frac{\sigma}{2}$. Then $\W^{s,p}$ is continuously embedded in $\W^{\sigma, q}$. Moreover if $1<p< q\leqslant+\infty$ and $s>\sigma$ such that $\frac{1}{p}-\frac{s}{2}<\frac{1}{q}-\frac{\sigma}{2}$ and $\sigma\in[-2,2]$ if $q=+\infty$, then $\W^{s,p}$ is compactly embedded in $\W^{\sigma,q}$.
    
\end{prop}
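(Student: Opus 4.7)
My approach is to reduce to the range $s,\sigma\geqslant 0$ by duality and then combine the classical Sobolev embedding in $\R^2$ with a weighted interpolation argument handling the $\langle x\rangle^s$ factor. The key ingredient is the equivalent norm $|u|_{\W^{s,p}}\approx|u|_{\H^{s,p}}+|\langle x\rangle^s u|_{\L^p}$ recalled in the preliminaries, which identifies $\W^{s,p}$ with $\H^{s,p}\cap\L^p(\langle x\rangle^{sp}\,\d x)$ for $s\geqslant 0$ and $p\in(1,+\infty)$.

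For the continuous embedding in the case $0\leqslant\sigma\leqslant s$, the bound $|u|_{\H^{\sigma,q}}\lesssim|u|_{\H^{s,p}}$ is immediate from the classical Sobolev embedding in dimension $2$. For the weighted part $|\langle x\rangle^\sigma u|_{\L^q}$, I view $\W^{s,p}$ as embedded simultaneously into $\L^p(\langle x\rangle^{sp}\,\d x)$ by definition and into $\L^{q_0}$ with $\frac{1}{q_0}=\frac{1}{p}-\frac{s}{2}$ via classical Sobolev applied to $\H^{s,p}$, and I complex-interpolate between these two endpoints. A direct computation (setting $\theta=1-\sigma/s$ in the Stein--Weiss interpolation of weighted Lebesgue spaces) yields $\W^{s,p}\hookrightarrow\L^q(\langle x\rangle^{\sigma' q}\,\d x)$ precisely on the critical line $\frac{1}{q}-\frac{\sigma'}{2}=\frac{1}{p}-\frac{s}{2}$, and the subcritical case is recovered by the trivial inclusion $\L^q(\langle x\rangle^{\sigma' q}\,\d x)\hookrightarrow\L^q(\langle x\rangle^{\sigma q}\,\d x)$ whenever $\sigma'\geqslant\sigma$. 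The remaining sign configurations follow from $\L^q\hookrightarrow\W^{\sigma,q}$ for $\sigma\leqslant 0$ and from dualizing via $(\W^{s,p})'=\W^{-s,p'}$, whose scaling relation is preserved under $(s,p,\sigma,q)\leftrightarrow(-\sigma,q',-s,p')$.

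For the compact embedding under strict inequality, I would use a cutoff and tightness strategy. Given a bounded sequence $(u_n)$ in $\W^{s,p}$, pick $s'<s$ and $\sigma'>\sigma$ so that $\frac{1}{p}-\frac{s'}{2}<\frac{1}{q}-\frac{\sigma'}{2}$ still holds; on any ball $B_R$, classical Rellich--Kondrachov extracts a subsequence strongly convergent in $\H^{\sigma,q}(B_R)$. On the tail $B_R^c$, the inequality $\langle x\rangle^\sigma\leqslant R^{\sigma-\sigma'}\langle x\rangle^{\sigma'}$ together with the continuous embedding at level $(s,\sigma',q)$ already established provides a uniform tail bound vanishing as $R\to+\infty$. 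A diagonal extraction and the triangle inequality conclude. The endpoint $q=+\infty$ with $\sigma\in[-2,2]$ is handled by transposing a compact embedding of the predual, using $\W^{\sigma,\infty}=(\W^{-\sigma,1})'$ and that adjoints of compact operators are compact; the constraint $\sigma\in[-2,2]$ reflects the range over which this dual definition has been introduced.

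The main technical obstacle I anticipate is making the weighted-Lebesgue interpolation step fully rigorous at the critical exponent, since one must verify that the complex interpolation functor produces precisely the $\langle x\rangle^{\sigma' q}$-weighted $\L^q$ space and that $\langle x\rangle^s$ is an admissible weight throughout the range $p\in(1,+\infty)$. A robust alternative, which I would fall back on if this proves delicate, routes everything through the Mehler heat kernel of $\e^{tH}$: its explicit Gaussian structure yields sharp $\L^p\to\L^q$ bounds, and the subordination formula $(-H)^{-s/2}=\frac{1}{\Gamma(s/2)}\int_0^{+\infty}t^{s/2-1}\e^{tH}\,\d t$ then reduces both the continuous embedding and the compact one to kernel estimates, the latter via a Riesz--Fréchet--Kolmogorov argument exploiting the Gaussian tails of the kernel.
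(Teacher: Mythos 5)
The paper does not actually prove this proposition: it is one of the statements whose proofs are deferred to \cite{Mackowiak_2025}, so there is no in-paper argument to compare yours against. Judged on its own terms, your strategy for the \emph{continuous} embedding is sound and standard: the equivalent norm $|u|_{\W^{s,p}}\approx|u|_{\H^{s,p}}+|\langle x\rangle^s u|_{\L^p}$, classical Sobolev for the $\H^{s,p}$ component, and the weighted bound on the critical line. The exponent arithmetic checks out (with $\theta=\tfrac{2}{s}\left(\tfrac1p-\tfrac1q\right)$ one lands on $\tfrac1q-\tfrac{\sigma'}{2}=\tfrac1p-\tfrac s2$ with $\sigma'\geqslant\sigma$), and the step you flag as the main obstacle --- identifying the complex interpolation space of weighted Lebesgue spaces --- can be bypassed entirely: the inequality $|\langle x\rangle^{\sigma'}u|_{\L^q}\leqslant|\langle x\rangle^{s}u|_{\L^p}^{1-\theta}|u|_{\L^{q_0}}^{\theta}$ is just H\"older's inequality applied to $\left(\langle x\rangle^{s}|u|\right)^{(1-\theta)q}|u|^{\theta q}$ with exponents $\tfrac{p}{(1-\theta)q}$ and $\tfrac{q_0}{\theta q}$. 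The duality reduction for negative indices is also correct, since the relation $\tfrac1p-\tfrac s2\leqslant\tfrac1q-\tfrac\sigma2$ is invariant under $(s,p,\sigma,q)\mapsto(-\sigma,q',-s,p')$.

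Two steps in the compactness part are genuine gaps as written. First, for $\sigma>0$ the tail argument does not close: strong convergence in $\H^{\sigma,q}(B_R)$ together with smallness of $|\langle x\rangle^{\sigma}u_n|_{\L^q(B_R^c)}$ does not control the \emph{global} $\H^{\sigma,q}(\R^d)$ norm of $u_n-u$, because that norm is nonlocal and is not the sum of an interior piece and a weighted $\L^q$ tail. You need either a cutoff estimate of the form $|(1-\chi_R)w|_{\W^{\sigma,q}}\lesssim|(1-\chi_R)\langle x\rangle^{-\eps}|_{\H^{\sigma,\infty}}|\langle x\rangle^{\eps}w|_{\W^{\sigma,q}}\lesssim R^{-\eps}|w|_{\W^{\sigma+\eps,q}}$ via the paper's product rule (\cref{Lem-ProductRule}) and \cref{Cor-action_V}, or, more cheaply, to route compactness through $p=q=2$, where the discreteness of the spectrum of $H$ gives compactness of $\W^{s,2}\hookrightarrow\W^{s',2}$ for $s'<s$ directly, and then compose with the continuous embeddings. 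Second, your treatment of $q=+\infty$ by transposition is circular: realizing $\W^{s,p}\hookrightarrow\W^{\sigma,\infty}$ as the adjoint of $\W^{-\sigma,1}\hookrightarrow\W^{-s,p'}$ requires a compact embedding with domain over $\L^1$, i.e.\ the endpoint $p=1$ excluded from the statement you are proving; the usual fix is to factor as $\W^{s,p}\hookrightarrow\W^{\sigma+\eps,q_1}\hookrightarrow\W^{\sigma,\infty}$ with $q_1<+\infty$ large, the first arrow compact and the second a Morrey-type continuous embedding. Your fallback through the Mehler kernel and the subordination formula is in fact the cleanest route to both halves of the statement and would avoid these issues altogether.
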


We can also gain integrability for the elements of $\W^{s,2}$, using the trade-off between localization and regularity in confining Sobolev spaces.
\begin{prop}\label[prop]{Reverse-Sobolev-embeddings}
    Let $p\in[1,2]$ and $s,\sigma\geqslant 0$ such that $s>\sigma+d\left(\frac{1}{p}-\frac{1}{2}\right)$. Then $\W^{s,2}$ is compactly embedded in $\W^{\sigma,p}$.
\end{prop}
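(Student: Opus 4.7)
The plan is to split the embedding as $\W^{s,2}\hookrightarrow\W^{s',2}\hookrightarrow\W^{\sigma,p}$ for some intermediate exponent $s'\in(\sigma+d(\tfrac{1}{p}-\tfrac{1}{2}),s)$, with the first inclusion compact and the second merely continuous. The continuous part encodes the real content of the statement, namely a trade-off between localization and integrability, while the compact part is a Rellich-Kondrachov phenomenon for the Hermite operator.

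For the continuous step I treat first $p\in(1,2]$, where the operator definition $|u|_{\W^{\sigma,p}}=|(-H)^{\sigma/2}u|_{\L^p}$ is directly available. Pick $\alpha\in(d(\tfrac{1}{p}-\tfrac{1}{2}),s'-\sigma)$, nonempty by the choice of $s'$. Hölder's inequality with exponents $\tfrac{2}{p}$ and $\tfrac{2}{2-p}$ gives
\[
|u|_{\W^{\sigma,p}}\leq \bigl|\langle x\rangle^{-\alpha}\bigr|_{\L^{2p/(2-p)}}\,\bigl|\langle x\rangle^\alpha(-H)^{\sigma/2}u\bigr|_{\L^2},
\]
the first factor being finite precisely because $\alpha\cdot\tfrac{2p}{2-p}>d$. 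Applying \cref{Cor-action_V} to $v=(-H)^{\sigma/2}u$,
\[
\bigl|\langle x\rangle^\alpha v\bigr|_{\L^2}\lesssim |v|_{\W^{\alpha,2}}=|u|_{\W^{\sigma+\alpha,2}}\leq |u|_{\W^{s',2}}.
\]
The case $p=1$ is handled analogously at integer orders $\sigma\in\{0,2\}$, using the same Hölder-weight bound together with \cref{Prop-action_d/dx} to push derivatives past $\langle x\rangle^\alpha$, and then extended to all $\sigma\in[0,2]$ by the complex interpolation definition of $\W^{\sigma,1}$ adopted in the paper.

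For the compact step, expand $u=\sum_{k\in\N}c_k h_k$ in the Hermite basis: the norm reads $|u|_{\W^{t,2}}^2=\sum_k\lambda_k^{2t}|c_k|^2$, so the inclusion $\W^{s,2}\hookrightarrow\W^{s',2}$ is unitarily equivalent to the natural inclusion between weighted $\ell^2$ spaces with weight ratio $\lambda_k^{2(s'-s)}\to 0$. Given a bounded sequence $(u_n)$ in $\W^{s,2}$, a standard diagonal extraction yields a subsequence whose Hermite coefficients converge pointwise; the tail estimate $\sum_{k\geq K}\lambda_k^{2s'}|c^{(n)}_k-c_k|^2\leq \lambda_K^{2(s'-s)}\sum_{k\geq K}\lambda_k^{2s}|c^{(n)}_k-c_k|^2$ combined with $s>s'$ gives norm convergence in $\W^{s',2}$.

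The mildly subtle step is the case $p=1$, where the operator characterization of the norm is not directly available and one must decompose through integer orders and interpolate. Everything else is routine: the Hölder-weight trick is standard, and the compactness is purely spectral, inherited from the compactness of $(-H)^{-1}$ on $\L^2$.
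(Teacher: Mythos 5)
Your proof is correct and takes essentially the same route as the paper: the continuous embedding rests on the same H\"older/weight trade-off (which you apply directly to $(-H)^{\sigma/2}u$ for $p>1$, slightly streamlining the paper's reduction to $\sigma=0$ followed by integer orders and interpolation, while your $p=1$ treatment matches the paper's), and the compactness comes from the strict inequality combined with the compact embedding $\W^{s,2}\subset\W^{s',2}$, which you make explicit via the Hermite expansion where the paper merely cites it.
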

\begin{proof}
    The compactness of the embedding follows from the sharpness of the condition $s>\sigma+d\left(\frac{1}{p}-\frac{1}{2}\right)$ and the compact embedding $\W^{s,2}\subset\W^{s-\eps,2}$ for any $\eps>0$. Assume $\sigma=0$ first and let $u\in\W^{s,2}$. Then, by Hölder's inequality,
    $$|u|_{\L^p}\leqslant \left|\langle x\rangle^{-s}\right|_{\L^r}|u|_{\W^{s,2}}$$
    for $\frac{1}{r}=\frac{1}{p}-\frac{1}{2}$. As $s>d\left(\frac{1}{p}-\frac{1}{2}\right)$, $sr>d$ and thus $\left|\langle x\rangle^{-s}\right|_{\L^r}<+\infty$. This proves the case $\sigma=0$. The case $\sigma\in2\N$ follows from \cref{Prop-action_d/dx,Cor-action_V}. Finally, the general case $\sigma\geqslant0$ follows by complex interpolation. 
\end{proof}

\subsection{White noise}

A real centered gaussian random field $\{\langle\xi,\phi\rangle,\; \phi\in\mathcal{S}(\R^d)\}$ defined on $(\Omega,\mathcal{A},\P)$ is called a white noise if it verifies the Wiener isometry
$$\forall \phi,\psi\in\mathcal{S}(\R^d),\; \E\left[\langle\xi,\phi\rangle\langle\xi,\psi\rangle\right]=(\phi,\psi)_{\L^2},$$
allowing to extend it as an isometry $\xi:\L^2(\R^d)\to\mathbb{L}^2(\Omega)$. For any orthonormal basis $(e_n)_{n\in\N}$ of $\L^2(\R^d)$, one can show
$$\xi = \sum_{n\in\N}\langle\xi,e_n\rangle e_n,$$
where the series converges almost surely in any Hilbert space $\L^2(\R^d)\subset\mathcal{H}\subset\D'(\R^d)$ such that the canonical embedding of $\L^2(\R^d)$ in $\mathcal{H}$ is Hilbert-Schmidt (see for example Proposition 13.5 in \cite{gammaRadonSurvey}). Recall that a bounded linear map $T:\mathcal{H}_0\to\mathcal{H}_1$ is Hilbert-Schmidt if there exists an orthonormal basis $(e_n)_{n\in\N}$ of $\mathcal{H}_0$ verifying
$$\sum_{n\in\N}|Te_n|_{\mathcal{H}_1}^2 <+\infty.$$
If this sum is finite for some orthonormal basis, it is then finite for every orthonormal basis, with the same value. This shows, almost surely, $\xi$ is defined as a random Schwartz distribution. We now investigate the regularity of $\xi$ in confining Sobolev spaces. We introduce the kernel of $(-H)^{-1}$,
\begin{equation}\label{Eq-DefK}
    \forall x,y\in\R^d,\; K(x,y) = \sum_{n\in\N} \frac{h_n(x)h_n(y)}{\lambda_n^2}.
\end{equation}

The following lemma extends Proposition 4 in \cite{debouard_2d_GP} which deals with the case $d=2$.

\begin{lemme}\label[lemme]{Lem-KLrWa2}
    Let $d\in\N^*$ and $q\in[2,+\infty]$. For any $\alpha<2-\frac{d}{2}-\frac{d}{q}$, $K\in\L^q_x\W^{\alpha,2}_y$.
\end{lemme}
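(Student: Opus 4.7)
The plan is to reduce the problem to an explicit one-dimensional integral in a semigroup time parameter, combining Hermite spectral calculus with Mehler's formula on the diagonal. By the definition of the $\W^{\alpha,2}$ norm and the orthonormality of $(h_n)_{n\in\N}$,
\[
\left|K(x,\cdot)\right|_{\W^{\alpha,2}_y}^2 = \sum_{n\in\N}\lambda_n^{2\alpha-4}h_n(x)^2.
\]
Since the hypothesis forces $\alpha<2-\frac{d}{2}-\frac{d}{q}\leqslant 2$, I would rewrite the scalar weight via the subordination identity
\[
\lambda_n^{2\alpha-4}=\frac{1}{\Gamma(2-\alpha)}\int_0^{+\infty}t^{1-\alpha}\e^{-t\lambda_n^2}\d t
\]
and exchange sum and integral by Tonelli, turning the spectral sum into the diagonal of the Mehler kernel of the semigroup $\e^{tH}$, which has the closed form
\[
\sum_{n\in\N}\e^{-t\lambda_n^2}h_n(x)^2=(2\pi\sinh(2t))^{-d/2}\e^{-|x|^2\tanh(t)}.
\]

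Next, I would take the $\L^{q/2}_x$ norm of this identity (meaningful because $q/2\geqslant 1$) and apply Minkowski's integral inequality in $t$ to push the norm under the integral. The remaining $x$-integral is an elementary Gaussian equal to a constant multiple of $(\tanh t)^{-d/q}$, with the convention that this factor is $1$ when $q=+\infty$. Taking square roots yields
\[
|K|_{\L^q_x\W^{\alpha,2}_y}^2\lesssim\int_0^{+\infty}t^{1-\alpha}(\sinh(2t))^{-d/2}(\tanh t)^{-d/q}\d t.
\]

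The last step is to verify finiteness of this one-variable integral by elementary asymptotics: the integrand decays exponentially at $t=+\infty$, and near $t=0$ it behaves like $t^{1-\alpha-d/2-d/q}$, which is integrable exactly when $1-\alpha-d/2-d/q>-1$, i.e.\ under the hypothesis $\alpha<2-\frac{d}{2}-\frac{d}{q}$. I do not foresee any real obstacle here: the uses of Tonelli and Minkowski are automatic by positivity, and the endpoint $q=+\infty$ is the easy one. The key structural input is Mehler's formula, which makes the threshold $2-\frac{d}{2}-\frac{d}{q}$ emerge transparently and sharply from the singular behaviour at $t=0$; an alternative approach based on pointwise bounds for individual Hermite functions $h_n$ in dimension $d\geqslant 2$ would be substantially harder and would not recover this exponent as cleanly.
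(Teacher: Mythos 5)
Your argument is correct, but it follows a genuinely different route from the paper. The paper only treats the two endpoints: for $q=+\infty$ it argues by duality, bounding $\left|(-H)^{-1}f\right|_{\L^\infty_x}\lesssim |f|_{\W^{\sigma-2,2}_x}$ for $\sigma>\frac{d}{2}$ via Sobolev embedding, and for $q=2$ it computes $|K|_{\L^2_x\W^{\alpha,2}_y}^2=\sum_k\lambda_k^{2\alpha-4}$ directly from the kernel theorem and the Weyl asymptotics $\lambda_k^2\sim k^{1/d}$; the intermediate exponents are then obtained by complex interpolation between these two mixed-norm spaces. You instead treat all $q\in[2,+\infty]$ at once: starting from the same pointwise identity $|K(x,\cdot)|_{\W^{\alpha,2}_y}^2=\sum_n\lambda_n^{2\alpha-4}h_n(x)^2$ (which also appears in the paper's proof of \cref{Cor-RegWN}), you subordinate $\lambda_n^{2\alpha-4}$ to the semigroup via the Gamma integral (legitimate since $\alpha<2$), resum with Mehler's formula on the diagonal, and reduce everything to the integrability of $t^{1-\alpha}(\sinh 2t)^{-d/2}(\tanh t)^{-d/q}$ near $t=0$; all exchanges are justified by positivity. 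What your approach buys is uniformity in $q$ with no interpolation step, and a transparent derivation of the threshold $2-\frac{d}{2}-\frac{d}{q}$ from the short-time singularity of the heat kernel; the price is the extra input of Mehler's formula, whereas the paper's endpoint arguments use only Sobolev embedding and the eigenvalue asymptotics already set up in \cref{Sec-ConfiningSobolev}. Both are complete proofs of the stated membership; note only that the lemma asserts a one-sided inclusion, so the sharpness you emphasize, while a pleasant by-product (and consistent with the ``if and only if'' the paper records at $q=2$), is not required.
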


\begin{proof}
    We only prove the cases $q=2$ or $q=+\infty$, as the general result follows by complex interpolation. First, assume $q=+\infty$. Let $\sigma > \frac{d}{2}$, then Sobolev embeddings implies
    $$\forall f\in\mathcal{S}(\R^d),\; \sup_{x\in\R^d}\left|\int_{\R^d}K(x,y)f(y)\d y\right| = \left|(-H)^{-1}f\right|_{\L^\infty_x}\lesssim \left|(-H)^{-1}f\right|_{\W^{\sigma,2}_x}\lesssim \left|f\right|_{\W^{\sigma-2,2}_x}.$$
    By density of $\mathcal{S}(\R^d)$ in $\W^{\sigma-2,2}$, it holds $K\in\L^\infty_x\W^{2-\sigma,2}_y$. Now, assume $q=2$. Then, using the Kernel theorem, 
    $$|K|_{\L^2_x\W^{\alpha,2}_y}^2 = \sum_{k\in\N} \left|(-H)^{\frac{\alpha}{2}-1}h_k\right|_{\L^2}^2=\sum_{k\in\N} \lambda_k^{2\alpha-4}.$$
    Thus, $K\in\L^2_x\W^{\alpha,2}_y$ if and only if $\alpha<2-d$, as $\lambda_k^2\sim k^{\frac{1}{d}}$.
    
\end{proof}

\begin{cor}\label[cor]{Cor-RegWN}
    Let $d\in\N^*$. Almost surely, for any $q\in[2,+\infty]$, $s>\frac{d}{q}>\sigma$, $\xi\in\W^{-\frac{d}{2}-s,q}$, and $\xi\notin\W^{-\frac{d}{2}-\sigma,q}$.
\end{cor}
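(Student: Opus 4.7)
The plan is to prove the membership and non-membership statements separately, establishing each for a countable dense family of parameters $(q,s)$ and $(q,\sigma)$, and then recovering the full claim via the monotonicity $\W^{-d/2-s_1,q} \hookrightarrow \W^{-d/2-s_2,q}$ for $s_1 \leqslant s_2$, so that the countable intersection of the resulting full-measure events still has probability one.

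For the positive direction, I would fix $q \in [2,+\infty)$ and $s > d/q$, set $\beta = (d/2+s)/2$, and expand the white noise in the Hermite basis as $\xi = \sum_{n\in\N} X_n h_n$ with $(X_n)_{n\in\N}$ i.i.d.\ $\mathcal{N}(0,1)$. Then $(-H)^{-\beta}\xi(x) = \sum_{n\in\N} \lambda_n^{-2\beta} X_n h_n(x)$ is, for each $x$, centered Gaussian with variance
\[
v(x) = \sum_{n\in\N} \lambda_n^{-4\beta} h_n(x)^2 = |K(x,\cdot)|_{\W^{2-2\beta,2}_y}^2,
\]
the second equality following from the expansion \cref{Eq-DefK} since the coefficient of $h_n$ in $y \mapsto K(x,y)$ is $\lambda_n^{-2} h_n(x)$. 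Fubini and the Gaussian moment formula then give
\[
\E\bigl[|(-H)^{-\beta}\xi|_{\L^q}^q\bigr] \lesssim_q |K|_{\L^q_x \W^{2-2\beta,2}_y}^q,
\]
and the right-hand side is finite by \cref{Lem-KLrWa2}, since $2-2\beta = 2-d/2-s < 2-d/2-d/q$. Hence $\xi \in \W^{-d/2-s,q}$ almost surely. The boundary case $q = +\infty$ follows by bootstrap: given $s > 0$, I pick $p < +\infty$ with $p > 2d/s$, choose $s' \in (d/p, s - d/p)$, apply the previous step with $p$ in place of $q$, and conclude with \cref{Sobolev-embeddings}, which provides $\W^{-d/2-s',p} \hookrightarrow \W^{-d/2-s,+\infty}$.

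For non-membership, I would start with $q = 2$: the same expansion gives
\[
|\xi|_{\W^{-d/2-\sigma,2}}^2 = \sum_{n\in\N} \lambda_n^{-d-2\sigma} X_n^2,
\]
and since $\lambda_n^2 \sim c\, n^{1/d}$, the weights $\lambda_n^{-d-2\sigma} \sim n^{-1/2-\sigma/d}$ are non-summable for $\sigma < d/2$; Kolmogorov's three-series criterion, applied to the positive independent terms $\lambda_n^{-d-2\sigma} X_n^2$, then forces the series to diverge almost surely. For $q \in (2,+\infty]$ with $\sigma < d/q$, I dualize \cref{Reverse-Sobolev-embeddings}: with $p = q/(q-1) \in [1,2)$, duality turns the embedding $\W^{a,2} \hookrightarrow \W^{b,p}$ (valid for $a > b + d/2 - d/q$) into $\W^{-b,q} \hookrightarrow \W^{-a,2}$. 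Taking $b = d/2 + \sigma$, the interval $(d/2+\sigma-d/q,\, d/2)$ is non-empty precisely because $\sigma < d/q$, so any $s''$ in it yields $\W^{-d/2-\sigma,q} \hookrightarrow \W^{-d/2-s'',2}$ with $s'' < d/2$. If $\xi$ belonged to $\W^{-d/2-\sigma,q}$, it would therefore belong to $\W^{-d/2-s'',2}$, contradicting the $q = 2$ step.

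The main obstacle lies in the two boundary regimes $q = +\infty$: on the positive side the Gaussian field $(-H)^{-\beta}\xi$ is not directly shown to be bounded, so one must trade a bit of regularity for integrability via a finite-$p$ estimate combined with \cref{Sobolev-embeddings}; on the negative side one must dualize \cref{Reverse-Sobolev-embeddings} carefully to produce a \emph{strict} exponent $s'' < d/2$, which is exactly what the gap $\sigma < d/q$ provides.
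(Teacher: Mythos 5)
Your proof is correct and follows essentially the same route as the paper: the positive part rests on the identical Fubini--Gaussian reduction of the $\L^q$ moment to $|K|_{\L^q_x\W^{2-d/2-s,2}_y}$ and \cref{Lem-KLrWa2} (the paper just phrases it as a Cauchy estimate on partial sums $\xi_N-\xi_M$, which also settles the convergence issue you leave implicit), and the negative part uses the same $q=2$ base case transferred to $q>2$ by dualizing \cref{Reverse-Sobolev-embeddings}. The only cosmetic difference is that you justify the $q=2$ divergence by Kolmogorov's three-series theorem where the paper cites Sazonov's theorem; both are valid.
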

\begin{proof}
    By Sazonov's theorem, almost surely
    $$\xi\in\W^{-\frac{d}{2}-s,2} \iff s>\frac{d}{2}$$
    as the injection of $\L^2(\R^d)$ in $\W^{-\frac{d}{2}-s,2}$ is Hilbert-Schmidt if and only if $s>\frac{d}{2}$. Moreover, by \cref{Reverse-Sobolev-embeddings}, for any $q\in[2,+\infty]$ and any $\sigma<\frac{d}{q}$, $\W^{-\frac{d}{2}-\sigma,q}\subset\W^{-d,2}$. Thus, almost surely, for any $q\in[2,+\infty]$ and any $\sigma<\frac{d}{q}$, $\xi\notin\W^{-\frac{d}{2}-\sigma,q}$. In particular, almost surely $\xi\in\mathcal{S}'(\R^d)$. Let $N,M\in\N$ and $$\xi_N = \sum_{n=0}^N \xi_n h_n.$$ Almost surely, $\xi_N$ converges to $\xi$ in $\mathcal{S}'(\R^d)$. Assume $N>M$. Let $q\in(2,+\infty)$ and $\alpha\in\R$, then by Fubini and gaussianity, it holds
    \begin{align*}
        \E\left[|\xi_N-\xi_M|_{\W^{\alpha,q}}^q\right] &= \int_{\R^d} \E\left[\left|(-H)^{\frac{\alpha}{2}}(\xi_N-\xi_M)(x)\right|^q\right]\d x\\
        &\lesssim_q \int_{\R^d} \E\left[\left|(-H)^{\frac{\alpha}{2}}(\xi_N-\xi_M)(x)\right|^2\right]^{\frac{q}{2}}\d x &\lesssim_q \int_{\R^d} \left|\sum_{n=M+1}^N \lambda_n^{2\alpha} |h_n(x)|^2\right|^{\frac{q}{2}}\d x.
    \end{align*}
    Now, remark
    $$|K(x)|_{\W^{\sigma,2}}^2 = \sum_{n\in\N}\lambda_n^{2\sigma-4}|h_n(x)|^2,$$
    thus for $\eps>0$ small enough,
    $$\E\left[|\xi_N-\xi_M|_{\W^{\alpha,q}}^q\right] \lesssim_q \lambda_M^{-2\eps} |K|_{\L^q_x\W^{2+\alpha+\eps,2}_y}^q.$$
    For $\alpha=-\frac{d}{2}-s$, one can choose $\eps>0$ such that $\alpha+\eps<-\frac{d}{2}-\frac{d}{q}$ and \cref{Lem-KLrWa2} ensures $|K|_{\L^q_x\W^{2+\alpha+\eps,2}_y}^q<+\infty$. Hence, $\xi_N $ converges to $\xi$ in $\mathbb{L}^q(\Omega,\W^{-\frac{d}{2}-s,q})$ and $\xi\in\W^{-\frac{d}{2}-s,q}$ a.s. The case $q=+\infty$ follows from Sobolev embeddings.
\end{proof}

\subsection{Confining Anderson operator in dimension 1}\label{Sec-AndersonOp1d}

A first approach to define $A = H + \xi$ could be to follow the one used in \cite{DumazLabbeAndersonLoc1D,MinamiAiry}. Recall that, in dimension 1, $\xi=B'$ where $B$ is a brownian motion on $\R$. Thus, for $u$ "sufficiently regular" a formal computation gives
$$Au = u''+B'u-x^2u = u''+(Bu)'-Bu'-x^2u=(u'+Bu)'-Bu'-x^2u.$$
This formal computation allows to define $A$, in a distributional sense, on the set of absolutely continuous functions as
$$A:u\in\mathcal{AC}(\R)\mapsto (u'+Bu)'-Bu'-x^2u\in\D'(\R).$$
In order to define it as an unbounded operator over $\L^2(\R)$, one can define it on $$ \mathrm{D}_0=\left\{u\in \L^2(\R)\cap\mathcal{AC}(\R),\; u'+Bu\in\mathcal{AC}(\R) \text{ and } (u'+Bu)'-Bu',x^2u\in\L^2(\R)\right\},$$
but showing $(A,\mathrm{D}_0)$ defines a self-adjoint operator is then a bit difficult. In order to circumvent the difficulty of the Sturm-Liouville approach, one can try to define $A=H+\xi$ through its quadratic form. This idea goes back to \cite{FukushimaNakao} where they constructed the quadratic form of $\p_x^2+\xi$ with Dirichlet conditions on bounded intervals. The construction we present extends to any distributional potential $\xi\in\W^{-\kappa,\infty}$ with $\kappa\in(0,1)$.\\

A formal integration by part gives
$$(-(H+\xi)u,v)=(-Hu,v)-(\xi u,v)=\re\int_\R u'\overline{v'}+x^2u\overline{v}\d x -(\xi,\overline{u}v).$$
Define the symmetric bilinear form
\begin{equation}\label{Eq-QuadFormA1d}
     \forall u,v\in\W^{1,2},\; a(u,v) = \re\int_\R u'\overline{v'}+x^2u\overline{v}d x - \langle \xi, \overline{u}v\rangle_{\W^{-1,\infty},\W^{1,1}}.
\end{equation}
Using Cauchy-Schwartz inequality and \cref{Lem-ProductRule}, it holds
$$\forall u,v\in\W^{1,2},\; |a(u,v)|\lesssim (1+|\xi|_{\W^{-1,\infty}}) |u|_{\W^{1,2}}|v|_{\W^{1,2}}.$$
It shows the symmetric bilinear form $a$ is bounded on $\W^{1,2}$. In order to apply Theorem 6.2.6 in \cite{KatoPerturbationBook} and construct $-A$, it would be sufficient to prove that $a$ is coercive. Unfortunately, it is not in general but it is quasi-coercive.

\begin{prop}\label[prop]{Prop-Quasi-coercivity-a-1d}
    There exists $\delta=\delta(\xi)>0$ such that
    $$\forall u\in\W^{1,2},\; a(u,u)+\delta|u|_{\L^2}^2\geqslant \frac{1}{2}|u|_{\W^{1,2}}^2.$$
\end{prop}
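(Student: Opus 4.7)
The plan is to control the noise term in $a(u,u)$ by duality and interpolation, then absorb it via Young's inequality into the deterministic part.

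First, I would identify $\xi$ as an element of $\W^{-\kappa,\infty}$ for some $\kappa \in (0,1)$. By \cref{Cor-RegWN} applied with $d=1$ and $q=+\infty$, almost surely $\xi \in \W^{-\kappa,\infty}$ for any $\kappa > 1/2$, so we may fix such a $\kappa$ with $\kappa \in (1/2, 1)$. The duality $\left(\W^{\kappa,1}\right)' = \W^{-\kappa,\infty}$ then gives
$$\left|\langle \xi, \overline{u}u\rangle_{\W^{-1,\infty},\W^{1,1}}\right| \leqslant |\xi|_{\W^{-\kappa,\infty}}\, |\overline{u}u|_{\W^{\kappa,1}}.$$

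Next I would bound $|\overline{u}u|_{\W^{\kappa,1}}$. Using the product rule \cref{Lem-ProductRule} with $s=\kappa$, $p=q=2$, $r=1$, and the continuous embedding $\W^{\kappa,2} \hookrightarrow \H^{\kappa,2}$ which follows from the equivalent norm $|u|_{\W^{\kappa,2}} \approx |u|_{\H^{\kappa,2}} + |\langle x\rangle^\kappa u|_{\L^2}$ recalled in \cref{Sec-ConfiningSobolev}, I obtain
$$|\overline{u}u|_{\W^{\kappa,1}} \lesssim |u|_{\W^{\kappa,2}}|u|_{\H^{\kappa,2}} \lesssim |u|_{\W^{\kappa,2}}^2.$$
Complex interpolation between $\L^2 = \W^{0,2}$ and $\W^{1,2}$ yields $|u|_{\W^{\kappa,2}} \leqslant |u|_{\L^2}^{1-\kappa}|u|_{\W^{1,2}}^{\kappa}$, so that
$$\left|\langle \xi, \overline{u}u\rangle\right| \lesssim |\xi|_{\W^{-\kappa,\infty}}\, |u|_{\L^2}^{2(1-\kappa)}|u|_{\W^{1,2}}^{2\kappa}.$$

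Finally, since $\kappa < 1$, I would apply Young's inequality with exponents $1/\kappa$ and $1/(1-\kappa)$ to split the right-hand side as $\eps |u|_{\W^{1,2}}^2 + C_\eps(\xi) |u|_{\L^2}^2$ for any $\eps > 0$. Choosing $\eps$ small enough (depending on $|\xi|_{\W^{-\kappa,\infty}}$) so that the $\W^{1,2}$-contribution is absorbed into the deterministic quadratic form: using the norm equivalence $|u|_{\W^{1,2}}^2 \approx |u|_{\L^2}^2 + |u'|_{\L^2}^2 + |xu|_{\L^2}^2$, the deterministic part $\re\int_\R |u'|^2 + x^2|u|^2\, \d x$ controls $|u|_{\W^{1,2}}^2$ up to a multiple of $|u|_{\L^2}^2$, and picking $\eps$ suitably gives
$$a(u,u) \geqslant \tfrac{1}{2}|u|_{\W^{1,2}}^2 - \delta(\xi)|u|_{\L^2}^2,$$
which is the desired quasi-coercivity. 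The only real obstacle is making sure the exponent $\kappa < 1$ is sufficient to absorb the noise term, but this is precisely the reason we needed $\kappa \in (0,1)$ in the hypothesis; the argument would fail at the endpoint $\kappa = 1$.
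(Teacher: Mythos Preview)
Your proof is correct and follows essentially the same route as the paper's. The only cosmetic difference is that the paper bounds the noise term via the dual product rule \cref{Cor-ProductRuleNegPosReg}, writing $\langle \xi,|u|^2\rangle=\langle \xi u,u\rangle_{\W^{-\kappa,2},\W^{\kappa,2}}$ and estimating $|\xi u|_{\W^{-\kappa,2}}$, whereas you bound $|\overline{u}u|_{\W^{\kappa,1}}$ directly via \cref{Lem-ProductRule}; both yield $|\langle \xi,|u|^2\rangle|\lesssim |\xi|_{\W^{-\kappa,\infty}}|u|_{\W^{\kappa,2}}^2$ and the rest of the argument (interpolation and Young) is identical. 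One small simplification: with the paper's norm convention $|u|_{\W^{1,2}}^2=\langle -Hu,u\rangle=\int_\R |u'|^2+x^2|u|^2\,\d x$ exactly, so no extra $|u|_{\L^2}^2$ correction is needed when identifying the deterministic part with $|u|_{\W^{1,2}}^2$.
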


\begin{proof}

    Remark for any $u,v\in\W^{1,2}$, as $\xi\in\W^{-\kappa,\infty}$ with $\kappa\in(0,1)$, 
    $$\langle \xi, |u|^2\rangle_{\W^{-1,\infty},\W^{1,1}} = \langle \xi u,u\rangle_{\W^{-\kappa,2},\W^{\kappa,2}}$$
    and duality and \cref{Cor-ProductRuleNegPosReg} imply
    $$|\langle \xi u,u\rangle_{\W^{-\kappa,2},\W^{\kappa,2}}|\lesssim |\xi|_{\W^{-\kappa,\infty}}|u|_{\W^{\kappa,2}}^2.$$
    By interpolation and Young inequality, there exists $\delta>0$ such that
    $$|\langle \xi u,u\rangle_{\W^{-\kappa,2},\W^{\kappa,2}}|\leqslant \frac{1}{2}|u|_{\W^{1,2}}^2+\delta|u|_{\L^2}^2,$$
    thus
    $$a(u,u)+\delta|u|_{\L^2}^2\geqslant \frac{1}{2}|u|_{\W^{1,2}}^2.$$
    
\end{proof}

We can then apply Theorem 6.2.6 in \cite{KatoPerturbationBook} to construct $-A+\delta$ as a positive self-adjoint operator on 
$$\mathrm{D}(A)=\left\{u\in\W^{1,2},\; \exists C>0,\; \forall \phi\in\W^{1,2},\; |a(u,\phi)|\leqslant C|\phi|_{\L^2}\right\}.$$
As $\W^{1,2}$ is compactly embedded in $\L^2(\R)$, $A$ has a discrete spectrum.

Using the exponential transform introduced in \cite{Haire_Labbe} and extensively used in the 2d case \cite{debussche_weber_T2,debussche2017solution,debussche2023global,tzvetkov2020dimensional,Tzvetkov_2023,Mackowiak_2025,mouzard2023simple}, we can obtain an exact definition of the domain. Let $Y=(-H)^{-1}\xi$, then $Y\in\H^{1,\infty}$. Then one can define $\rho=\e^Y\in \H^{1,\infty}$ and for any $u=\rho v$, it holds formally
\begin{equation}
    Au = \rho (Hv+2Y'v'+xY\cdot xv+|Y'|^2v).\label{Eq-FormulaA1d}
\end{equation}
This formula is true for any $u=\rho v\in \W^{1,2}$ (recall \cref{Lem-ProductRule} implies $\rho\W^{1,2}=\W^{1,2}$) and the we deduce an explicit definition of the domain.

\begin{prop}\label[prop]{Prop-LocDomain1d}

    Let $Y=(-H)^{-1}\xi$ and $\rho=\e^Y$. Then $\mathrm{D}(A)=\rho\W^{2,2}$. In particular, for any $u\in\mathrm{D}(A)$, $x^2u\in\L^2(\R)$.
    
\end{prop}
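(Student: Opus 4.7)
My plan is to establish both inclusions $\rho \W^{2,2} \subset \mathrm{D}(A)$ and $\mathrm{D}(A) \subset \rho \W^{2,2}$ via the multiplicative substitution $u = \rho v$. The preliminary step, which drives everything, is a reformulation of the quadratic form: expanding $(\rho v)' = \rho(v' + Y' v)$ in the principal term of $a$, integrating the cross-term $\int \rho^2 Y'(v\bar\psi)'\,\d x$ by parts (legitimate since $(\rho^2)' = 2\rho^2 Y' \in \L^\infty$ and $v\bar\psi \in \W^{1,1}$ by \cref{Lem-ProductRule}), and using the distributional identity $Y'' = -\xi + x^2 Y$ coming from $-HY = \xi$, one obtains for all $v, \psi \in \W^{1,2}$
\[
a(\rho v, \rho \psi) = \re \int_\R \rho^2 v'\bar{\psi'}\,\d x + \re \int_\R \rho^2 W v \bar\psi \,\d x, \qquad W := x^2(1-Y) - (Y')^2,
\]
the pairing against $Y''$ cancelling exactly the $\langle \xi,\cdot\rangle$ contribution in the definition of $a$.

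For the inclusion $\rho \W^{2,2} \subset \mathrm{D}(A)$: given $u = \rho v$ with $v \in \W^{2,2}$, set $f := \rho(Hv + 2Y'v' + x^2 Y v + (Y')^2 v)$. Each factor in parentheses belongs to $\L^2$ by $Y, Y' \in \L^\infty$ and $v \in \W^{2,2}$, so $f \in \L^2$. Integrating $\rho v'' \bar\phi$ by parts against $\phi \in \W^{1,2}$, rigorous because $v \in \H^2$, yields $a(u, \phi) = -(f, \phi)_{\L^2}$ for every $\phi \in \W^{1,2}$, so $u \in \mathrm{D}(A)$ with $Au = f$.

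For the inclusion $\mathrm{D}(A) \subset \rho \W^{2,2}$: given $u \in \mathrm{D}(A)$, write $u = \rho v$ with $v \in \W^{1,2}$ using $\rho^{\pm 1} \in \mathrm H^{1,\infty}$ and \cref{Lem-ProductRule}. The identity $a(u, \phi) = -(Au, \phi)_{\L^2}$ together with the form identity at $\phi = \rho \psi$ yields distributionally
\[
Hv = \rho^{-1} Au - 2 Y' v' - x^2 Y v - (Y')^2 v.
\]
The crucial observation is that $x^2 Y v = (xY)(xv) \in \L^2$: indeed $xY \in \L^\infty$ because $Y \in \W^{1,\infty}$ (by \cref{Cor-RegWN} combined with the continuity of $(-H)^{-1}: \W^{s,\infty} \to \W^{s+2,\infty}$ coming from \cref{Prop-action_d/dx,Cor-action_V}), and $xv \in \L^2$ because $v \in \W^{1,2}$; the remaining terms $\rho^{-1} Au, Y' v', (Y')^2 v$ lie in $\L^2$ trivially. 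Hence $Hv \in \L^2$, and since $v \in \W^{1,2}$ is in the form domain of $-H$ with $|\langle -Hv, \psi\rangle| \leqslant |Hv|_{\L^2}|\psi|_{\L^2}$ for all $\psi \in \W^{1,2}$, the standard identification $\mathrm{D}(-H) = \W^{2,2}$ yields $v \in \W^{2,2}$. Therefore $u \in \rho \W^{2,2}$.

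The ``in particular'' statement then follows since $x^2 u = \rho(x^2 v) \in \L^\infty \cdot \L^2 \subset \L^2$. The main obstacle is the use of the Hermite-Sobolev regularity $Y \in \W^{1,\infty}$ (stronger than the $\H^{1,\infty}$ stated earlier in the section): this extra decay of $Y$ at infinity is precisely what makes the otherwise troublesome term $x^2 Y v$ integrable, by trading one power of $x$ from the coefficient against the $\W^{1,2}$-decay of $v$.
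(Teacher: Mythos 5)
Your proof is correct and follows essentially the same route as the paper: both directions rest on the transformed formula $A(\rho v)=\rho(Hv+2Y'v'+xY\cdot xv+(Y')^2v)$, with the converse inclusion obtained by isolating $Hv$ and observing that $Y',xY\in\L^\infty$ and $v,v',xv\in\L^2$ force $Hv\in\L^2$, i.e.\ $v\in\W^{2,2}$. The only difference is that you re-derive the quadratic-form identity behind \cref{Eq-FormulaA1d} in detail (correctly, via the cancellation of the $\xi$-pairing against $Y''=x^2Y-\xi$), where the paper simply invokes that formula.
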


\begin{proof}[Proof of \cref{Prop-LocDomain1d}.]

    It is easy to check $\rho\W^{2,2}\subset \mathrm{D}(A)$ from \cref{Eq-FormulaA1d}. We now show the converse. As $\mathrm{D}(A)\subset\W^{1,2}$, for any $u\in \mathrm{D}(A)$, $v=u\rho^{-1}\in\W^{1,2}$ and there exists $f\in\L^2(\R)$ such that $Au=f$. Using \cref{Eq-FormulaA1d}, it is equivalent to
    $$Hv = \rho^{-1}f - 2Y'v' - x^2Yv - |Y'|^2 v.$$
    Remark $Y',xY\in\L^\infty(\R)$ and $v,v',xv\in\L^2(\R)$, thus $Hv\in\L^2(\R)$ i.e. $v\in\W^{2,2}$.
    
\end{proof}

Inspired by this result, we define
 \begin{equation}
     \forall s\in[0,2),\; \forall p\in(1,+\infty),\;  \D^{s,p}=\rho\W^{s,p} \text{ and } \D^{-s,p}=(\D^{s,p'})',\label{Def-Dsq}
 \end{equation}
with $\frac{1}{p}+\frac{1}{p'}=1$.\\

Remark that in dimension 1, $\W^{1,1}\subset\H^{1,1}(\R)\subset\L^\infty(\R)$, thus for any $p\in(1,+\infty]$, $\L^p(\R)\subset\W^{-\frac{1}{p},\infty}$ and the previous construction applies to $A+W$ with $W\in\L^p$. \cref{Prop-PerturbationOp1d} below shows the domain of $A$ is stable under the perturbation by a potential in $\L^2(\R,\R)+\L^\infty(\R,\R)$. Recall that $\L^p(\R,\R)\subset \L^2(\R,\R)+\L^\infty(\R,\R)$ for any $p\in[2,+\infty]$, thus, \cref{Prop-PerturbationOp1d} provide the stability of $\mathrm{D}(A)$ under a quite large class of perturbations.
 \begin{prop}\label[prop]{Prop-PerturbationOp1d}
     Let $W\in\L^2(\R,\R)+\L^\infty(\R,\R)$. Then $(A+W,\mathrm{D}(A))$ is a self-adjoint operator.
 \end{prop}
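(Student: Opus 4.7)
The approach is the Kato-Rellich theorem: since $W$ is multiplication by a real measurable function, the operator it defines on any $\mathrm{D}\subset\L^2(\R,\R)$ such that $W\mathrm{D}\subset\L^2(\R,\R)$ is automatically symmetric; so it suffices to prove that $W$ has $A$-relative bound $0$, i.e.
$$\forall \varepsilon>0,\; \exists C_\varepsilon>0,\; \forall u\in\mathrm{D}(A),\quad |Wu|_{\L^2}\leqslant \varepsilon|Au|_{\L^2}+C_\varepsilon|u|_{\L^2}.$$
Kato-Rellich (e.g.\ Theorem V.4.3 in \cite{KatoPerturbationBook}) then ensures that $(A+W,\mathrm{D}(A))$ is self-adjoint.

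I would split $W=W_2+W_\infty$ with $W_2\in\L^2(\R,\R)$ and $W_\infty\in\L^\infty(\R,\R)$. The bounded part is trivially of relative bound $0$ via $|W_\infty u|_{\L^2}\leqslant |W_\infty|_{\L^\infty}|u|_{\L^2}$. For the singular part, Hölder gives $|W_2u|_{\L^2}\leqslant |W_2|_{\L^2}|u|_{\L^\infty}$, so the entire problem reduces to bounding $|u|_{\L^\infty}$ by $\varepsilon|Au|_{\L^2}+C_\varepsilon|u|_{\L^2}$ for $u\in\mathrm{D}(A)$.

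To prove such a bound, I would use the exponential transform of \cref{Prop-LocDomain1d}: write $u=\rho v$ with $v\in\W^{2,2}$. The identity \cref{Eq-FormulaA1d} rearranges as
$$Hv=\rho^{-1}Au-2Y'v'-x^2 Y v-|Y'|^2v,$$
and since $\rho^{\pm 1}\in\L^\infty$ (from $Y\in\mathrm{H}^{1,\infty}$) and $Y',\,xY,\,|Y'|^2\in\L^\infty$, we obtain $|Hv|_{\L^2}\lesssim |Au|_{\L^2}+|v|_{\W^{1,2}}$. Elliptic regularity for the Hermite operator (together with $|x^2v|_{\L^2}\lesssim|v|_{\W^{2,2}}$) yields $|v|_{\W^{2,2}}\lesssim |Hv|_{\L^2}+|v|_{\L^2}$, and the interpolation $|v|_{\W^{1,2}}\leqslant \eta|v|_{\W^{2,2}}+C_\eta|v|_{\L^2}$ then gives, after absorption, $|v|_{\W^{2,2}}\lesssim|Au|_{\L^2}+|u|_{\L^2}$. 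Because dimension $1$ is very favorable for Sobolev embeddings, the Gagliardo-Nirenberg inequality provides $|v|_{\L^\infty}\lesssim |v|_{\L^2}^{1/2}|v'|_{\L^2}^{1/2}\lesssim |v|_{\L^2}^{1/2}|v|_{\W^{2,2}}^{1/2}$, and hence $|u|_{\L^\infty}\leqslant |\rho|_{\L^\infty}|v|_{\L^\infty}$ combined with Young's inequality yields $|u|_{\L^\infty}\leqslant \varepsilon|Au|_{\L^2}+C_\varepsilon|u|_{\L^2}$ with $\varepsilon>0$ arbitrarily small.

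Chaining all the inequalities gives the relative bound $0$ for $W$, and Kato-Rellich concludes. The only mildly technical step is the interpolation $\L^\infty\hookrightarrow\L^2\cap\mathrm{D}(A)$ with arbitrarily small prefactor on the $|Au|_{\L^2}$ side; it is painless in $d=1$ thanks to the subcritical embedding $\mathrm{H}^{s,2}\hookrightarrow\L^\infty$ for $s>1/2$, but it is exactly the step that prevents the same statement from holding verbatim in dimension $2$, which is why the statement is restricted to the one-dimensional construction of \cref{Sec-AndersonOp1d}.
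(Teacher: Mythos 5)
Your proof is correct, but it takes a genuinely different route from the paper. The paper never touches Kato--Rellich: it observes that $\L^2(\R)+\L^\infty(\R)\subset\W^{-\frac12-,\infty}$, so the quadratic-form construction of \cref{Sec-AndersonOp1d} applies verbatim to the potential $\xi+W$ and produces a self-adjoint operator $(A+W,\mathrm{D}(A+W))$ with $\mathrm{D}(A+W)\subset\W^{1,2}$; it then shows $\mathrm{D}(A)\subset\mathrm{D}(A+W)$ because $A+W$ maps $\mathrm{D}(A)\subset\W^{1,2}\subset\L^2\cap\L^\infty$ into $\L^2$, and gets the reverse inclusion by the symmetric argument applied to $A=(A+W)-W$, concluding by maximality of self-adjoint operators. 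Your argument instead establishes the quantitative elliptic estimate $|v|_{\W^{2,2}}\lesssim|Au|_{\L^2}+|u|_{\L^2}$ for $u=\rho v\in\mathrm{D}(A)$ from \cref{Eq-FormulaA1d} (essentially a graph-norm version of \cref{Prop-LocDomain1d}), feeds it through the one-dimensional embedding into $\L^\infty$ with a Young absorption, and concludes that $W$ is infinitesimally $A$-bounded. All the ingredients you invoke ($Y',xY,|Y'|^2,\rho^{\pm1}\in\L^\infty$, the spectral interpolation inequality between $\W^{2,2}$ and $\L^2$) are available in the paper, so the chain closes. The trade-off: the paper's form-theoretic argument is softer and shorter, and its structure is what generalizes to the two-dimensional construction; yours costs an elliptic regularity estimate but yields the strictly stronger conclusion that $W$ has $A$-relative bound zero, which the paper's proof does not provide. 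Your closing remark correctly identifies the $\L^\infty$ embedding as the step that confines this route to $d=1$.
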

 \begin{proof}
    By construction $\mathrm{D}(A)\subset\W^{1,2}\subset\L^2(\R)\cap\L^\infty(\R)$. Thus, for any $W\in\L^2(\R,\R)+\L^\infty(\R,\R)$, $A+W:\mathrm{D}(A)\to\L^2(\R)$ is well-defined and symmetric as $W$ is real-valued. As $W\in\W^{-\frac{1}{2}-,\infty}$, there exists $\mathrm{D}(A+W)\subset\W^{1,2}$ such that $(A+W,\mathrm{D}(A+W))$ is a self-adjoint operator. Thus, $A+W:\mathrm{D}(A)\to\L^2(\R)$ implies $\mathrm{D}(A)\subset\mathrm{D}(A+W)$. Likewise, using $A=(A+W)-W$ and  $\mathrm{D}(A+W)\subset\L^2(\R)\cap\L^\infty(\R)$, one obtain $A:\mathrm{D}(A+W)\to\L^2(\R)$, thus $\mathrm{D}(A)=\mathrm{D}(A+W)$.  
 \end{proof}

Moreover, one can show simplicity of the spectrum using an elementary ODE argument.

\begin{prop}\label[prop]{Prop-SpectralGapA1d}
    Let $W\in\L^2(\R,\R)+\L^\infty(\R,\R)$. The spectrum of $-A+W$ is simple. Moreover, $-A+W$ the eigenfunction associated to the lowest eigenvalue can be chosen positive.
\end{prop}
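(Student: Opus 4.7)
The plan is to derive both claims from the second-order linear ODE satisfied by eigenfunctions under the exponential transform of \cref{Prop-LocDomain1d}: a Wronskian identity gives simplicity, and a Rayleigh quotient argument combined with Cauchy uniqueness gives positivity of the ground state.

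For simplicity, let $\mu$ be an eigenvalue of $-A+W$ and let $\phi \in \mathrm{D}(A)$ be an associated eigenfunction. Writing $\phi = \rho v$ with $v \in \W^{2,2} \subset \H^2(\R)$, \cref{Eq-FormulaA1d} turns the equation $-A\phi + W\phi = \mu\phi$ into
\[
-v'' - 2Y' v' + \bigl(x^2 - x^2 Y - |Y'|^2 + W - \mu\bigr) v = 0,
\]
a linear second-order ODE whose coefficients lie in $\L^1_{\mathrm{loc}}(\R)$ since $Y, Y' \in \L^\infty(\R)$ and $W \in \L^2 + \L^\infty$. Given two eigenfunctions $\phi_j = \rho v_j$ ($j=1,2$) at the same eigenvalue, the Wronskian $W_{12} := v_1 v_2' - v_1' v_2$ is absolutely continuous (since $v_j, v_j' \in \H^1(\R)$ and $v_j'' \in \L^2$), and using the ODE one computes $W_{12}' = -2 Y' W_{12}$ almost everywhere, so that
\[
W_{12}(x) = W_{12}(0)\,\e^{-2(Y(x) - Y(0))}.
\]
Since $Y \in \L^\infty(\R)$, this has modulus at least $|W_{12}(0)|\,\e^{-4|Y|_{\L^\infty}}$. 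On the other hand, $v_j, v_j' \in \H^1(\R)$ forces $v_j(x), v_j'(x) \to 0$ as $|x| \to \infty$, whence $W_{12}(x) \to 0$ and therefore $W_{12}(0) = 0$. Consequently $W_{12} \equiv 0$ and $v_1, v_2$ are colinear.

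For positivity, since $\xi$ and $W$ are real, $-A+W$ commutes with complex conjugation; combined with the simplicity just proved, this allows one to pick a real-valued ground state $\phi_0$ for $\mu_0 = \min \sigma(-A+W)$. Restricted to real-valued $u \in \W^{1,2}$, the quadratic form $a(u,u) + (Wu,u)_{\L^2}$ depends on $u$ only through $u^2$ and $(u')^2$, and by the chain rule for absolutely continuous functions one has $u^2 = |u|^2$ and $(u')^2 = (|u|')^2$ almost everywhere. Hence $|\phi_0| \in \W^{1,2}$ realizes the same Rayleigh quotient as $\phi_0$, is therefore also a minimizer and thus an eigenfunction for $\mu_0$, lying in $\mathrm{D}(A)$ by the variational characterization of the domain. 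Simplicity forces $|\phi_0| = c\,\phi_0$, so $\phi_0$ has constant sign and we may assume $\phi_0 \geq 0$. Should $\phi_0(x_0) = 0$ at some $x_0 \in \R$, then $v_0 := \rho^{-1}\phi_0 \in \H^2(\R)$ would be non-negative and $C^1$ with $v_0(x_0) = v_0'(x_0) = 0$; Cauchy uniqueness for the linear ODE above would then yield $v_0 \equiv 0$, contradicting $\phi_0 \neq 0$.

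The main technical point is the Carathéodory-type existence and uniqueness for the ODE, since the coefficient of $v$ only lies in $\L^1_{\mathrm{loc}}$. Rewriting the equation in vector form $(v, v')' = M(x)(v, v')^\top$ with $M \in \L^1_{\mathrm{loc}}(\R, \R^{2\times 2})$ places it in the standard Carathéodory framework where Grönwall's lemma yields uniqueness; with this in hand both the Wronskian computation and the Cauchy-data argument above are routine.
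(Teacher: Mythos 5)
Your proof is correct. The paper actually states \cref{Prop-SpectralGapA1d} without proof, remarking only that it follows from ``an elementary ODE argument''; your Wronskian identity for the transformed equation (giving simplicity via decay of $v_j,v_j'$ at infinity) together with the Rayleigh-quotient symmetrization and Carath\'eodory/Cauchy uniqueness for strict positivity is a faithful and complete realization of exactly that argument, and it correctly uses the ingredients the paper does supply (\cref{Prop-LocDomain1d}, \cref{Eq-FormulaA1d}, \cref{Prop-PerturbationOp1d}). It is also genuinely different from, and more elementary than, the Krein--Rutman/irreducible-semigroup route the paper must use in dimension 2 (\cref{Prop-SpectralGapA2d}), which is unavailable here only in the sense that the ODE shortcut makes it unnecessary.
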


Using Sazonov theorem, one can prove a maximal local Sobolev regularity for elements of $\mathrm{D}(A)$.

 \begin{cor}\label[cor]{Cor-MaximalRegDomain1d}
 
     Almost surely, any $u\in\mathrm{D}(A)$ which belongs to $\mathrm{H}^{\frac{3}{2},q}(I)$ for some interval $I\subset\R$ and $q\in[2,+\infty]$ vanishes everywhere on $I$.
     
 \end{cor}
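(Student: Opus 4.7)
Argue by contradiction. Assume $u\in\mathrm{D}(A)$ belongs to $\mathrm{H}^{\frac{3}{2},q}(I)$ for some open interval $I\subset\R$ and $q\in[2,+\infty]$, but $u$ does not vanish identically on $I$. By Sobolev embedding $\mathrm{H}^{\frac{3}{2},q}(I)\hookrightarrow\mathrm{C}(I)$, there is an open sub-interval $J\Subset I$ on which $|u|\geqslant c$ for some $c>0$. Since $\L^\infty(J)\hookrightarrow\L^{q'}(J)$ for every $q'<+\infty$ on the bounded set $J$, the case $q=+\infty$ reduces to $q\in[2,+\infty)$, which we now treat. The strategy is to upgrade the regularity of $\xi$ locally on $J$ until it contradicts \cref{Cor-RegWN}.

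The first step is to extract the distributional equation. By definition of $A$ through its quadratic form, $(-Au,\phi)_{\L^2}=a(u,\phi)$ for every $\phi\in\mathcal{C}_c^\infty(\R)\subset\W^{1,2}$; integrating by parts distributionally inside $a$ yields
$$\xi u = Au - u'' + x^2 u \qquad\text{in }\mathcal{D}'(\R),$$
where $\xi u$ is defined through the $\W^{-1,\infty}\times\W^{1,1}$ duality (note that $u\phi\in\W^{1,1}$ whenever $u\in\W^{1,2}$). Restricted to $J$, every summand on the right-hand side belongs to $\mathrm{H}^{-\frac{1}{2},q}(J)$: $Au$ and $x^2 u$ lie in $\L^2(J)$, which embeds into $\mathrm{H}^{-\frac{1}{2},q}(J)$ for $q\in[2,+\infty)$ by duality and the local embedding $\mathrm{H}^{\frac{1}{2},q'}(J)\subset\L^2(J)$; and $u''\in\mathrm{H}^{-\frac{1}{2},q}(J)$ by hypothesis. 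Hence $\xi u\in\mathrm{H}^{-\frac{1}{2},q}(J)$.

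Now divide by $u$. On $J$ the function $u$ is bounded away from $0$, so a composition estimate (composing $u$ with a smooth function on a compact range not containing $0$) gives $1/u\in\mathrm{H}^{\frac{3}{2},q}(J)$. Since $\frac{3}{2}>\frac{1}{q}$, the embedding $\mathrm{H}^{\frac{3}{2},q}(J)\hookrightarrow\L^\infty(J)$ together with a Kato–Ponce-type product estimate (\cref{Prop-Kato-Ponce} in its dual formulation) shows that multiplication by an element of $\mathrm{H}^{\frac{3}{2},q}(J)$ acts continuously on $\mathrm{H}^{-\frac{1}{2},q}(J)$. Therefore
$$\xi = \frac{1}{u}\cdot(\xi u) \in \mathrm{H}^{-\frac{1}{2},q}(J),$$
the identity being verified by testing against any $\phi\in\mathcal{C}_c^\infty(J)$ and unfolding the definition of $\xi u$. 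Multiplying by a cutoff $\chi\in\mathcal{C}_c^\infty(J)$ produces $\chi\xi\in\W^{-\frac{1}{2},q}(\R)$, since on compact sets the Hermite weight is bounded and the local $\mathrm{H}^{-\frac{1}{2},q}$ norm controls the $\W^{-\frac{1}{2},q}$ norm. This contradicts \cref{Cor-RegWN} (with $d=1$, $\sigma=0<\frac{1}{q}$): almost surely $\xi\notin\W^{-\frac{1}{2},q}(\R)$. Taking a countable intersection over a dense family of rational intervals and $q\in\{2,3,\dots\}$ produces a single a.s.\ event on which the conclusion holds for every $(I,q)$.

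The main obstacle is the product step: rigorously establishing both that $1/u\in\mathrm{H}^{\frac{3}{2},q}(J)$ through composition, and that multiplication by an $\mathrm{H}^{\frac{3}{2},q}$-function preserves $\mathrm{H}^{-\frac{1}{2},q}$-regularity. Both assertions are standard in the subcritical Hölder regime $\frac{3}{2}>\frac{1}{q}$ but require some care in their formulation; once they are in hand, the rest of the argument is a straightforward chain of embeddings and a direct appeal to \cref{Cor-RegWN}.
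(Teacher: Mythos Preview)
Your strategy is essentially the paper's: assume $u$ does not vanish on $I$, find a sub-interval $J$ on which $|u|\geqslant c$, and bootstrap the local regularity of $\xi$ on $J$ beyond what white noise allows. The paper organises the bookkeeping slightly differently --- it writes $u=\rho v$ with $v\in\W^{2,2}\subset\mathcal C^1$ and shows $Y'\in\mathrm H^{\frac{1}{2}}(J)$, so that the division step uses $v^{-1}\in\mathrm H^{1,\infty}(J)$ rather than your $u^{-1}\in\mathrm H^{\frac{3}{2},q}(J)$ --- but this is cosmetic; the paper's route avoids the composition lemma you flag as the ``main obstacle'', because $v$ has a full classical derivative.

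There is, however, a genuine gap in your contradiction step. You obtain $\chi\xi\in\W^{-\frac{1}{2},q}(\R)$ for a cutoff $\chi$ supported in $J$ and claim this contradicts \cref{Cor-RegWN}. It does not: \cref{Cor-RegWN} only asserts $\xi\notin\W^{-\frac{1}{2},q}(\R)$ \emph{globally}, which is entirely compatible with $\chi\xi$ being smoother for one particular $\chi$. What you actually need is the \emph{local} statement that a.s.\ $\xi\notin\mathrm H^{-\frac{1}{2}}(J)$ for every fixed interval $J$; this is not a corollary of the global statement but requires applying Sazonov's theorem separately on each interval (equivalently, using that the restriction of white noise to $J$ is again a white noise on $J$). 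The paper makes this explicit by working on the event
\[
\O_0=\{Y\in\mathrm H^{1,\infty}(\R)\}\cap\bigcap_{r\in\mathbb Q}\bigcap_{n\in\N}\bigl\{\xi\notin\mathrm H^{-\frac{1}{2}}\bigl([r-2^{-n},r+2^{-n}]\bigr)\bigr\},
\]
which has full probability precisely by this local Sazonov argument. Your final sentence about intersecting over rational intervals shows you sense the issue, but the probabilistic input you need --- that each local event above has probability one --- is never stated or justified, and it does not follow from \cref{Cor-RegWN} as written.
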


 \begin{proof}

     Recall $Y=(-H)^{-1}\xi$. We work on the event 
     $$\O_0=\{Y\in\mathrm{H}^{1,\infty}(\R)\}\cap\bigcap_{r\in\mathbb{Q}}\bigcap_{n\in\N}\left\{\xi\notin\mathrm{H}^{-\frac{1}{2}}\left(\left[r-2^{-n},r+2^{-n}\right]\right)\right\}$$
     which is of probability 1 by \cref{Cor-RegWN}, Sazonov theorem and Sobolev embeddings. By \cref{Prop-LocDomain1d,Cor-RegWN,Cor-action_V,Prop-action_d/dx,Sobolev-embeddings}, $\mathrm{D}(A)=\rho\W^{2,2}$ with $\rho=\e^Y$. Let $v\in\W^{2,2}$ such that $v_{|I}\neq 0$ and $u=\rho v\in \mathrm{H}^{\frac{3}{2},q}(I)$. Without lost of generality, one can assume $I$ is bounded and thus $u=\rho v\in\mathrm{H}^{\frac{3}{2}}(I)$. Then, $u'=\rho (v'+Y'v)\in\H^{\frac{1}{2}}(I)$. As $\rho\in\H^{1,\infty}(I)$ and $v'\in\H^1(I)$, it implies $Y'v\in \mathrm{H}^{\frac{1}{2}}(I)$. As $v_{|I}\neq 0$ and $\W^{2,2}\subset\mathcal{C}^1(\R)$, there exists a compact interval $J\subset I$ and $c>0$ such that 
     $$\forall x\in J,\; |v(x)|>c.$$ Thus $v^{-1}\in\L^\infty(J)$ and $\left(v^{-1}\right)'=-v^{-2}v'\in\L^\infty(J)$. Hence $v^{-1}\in \mathrm{H}^{1,\infty}(J)$ and by Kato-Ponce inequality, $Y'\in \mathrm{H}^{\frac{1}{2}}(J)$. As $x^2Y\in\L^\infty(J)\subset\L^2(J)$, it holds $\xi\in\mathrm{H}^{-\frac{1}{2}}$, which is impossible on $\O_0$. Thus $v_{|I}= 0$, i.e.\ $u_{|I}= 0$.
     
 \end{proof}

As already mentioned, we could have defined $A = H + \xi$ as in \cite{MinamiAiry,DumazLabbeAndersonLoc1D}, using $\xi=B'$ where $B$ is a brownian motion on $\R$ and the formal Sturm-Liouville formula.
$$Au = u''+B'u-x^2u = u''+(Bu)'-Bu'-x^2u=(u'+Bu)'-Bu'-x^2u.$$
This formal computation allows to define $A$, in a distributional sense, on the set of absolutely continuous functions as
$$A:u\in\mathcal{AC}(\R)\mapsto (u'+Bu)'-Bu'-x^2u\in\D'(\R).$$
In order to define it as an unbounded operator over $\L^2(\R)$, one can define it on $$ \mathrm{D}_0=\left\{u\in \L^2(\R)\cap\mathcal{AC}(\R),\; u'+Bu\in\mathcal{AC}(\R) \text{ and } (u'+Bu)'-Bu',x^2u\in\L^2(\R)\right\},$$
but showing $(A,\mathrm{D}_0)$ defines a self-adjoint operator is then a bit difficult. However, when $A$ is already defined as a self-adjoint operator, verifying that $\mathrm{D}(A)=\mathrm{D}_0$ is easier. Indeed, for $u\in \mathrm{D}(A)$, \cref{Prop-LocDomain1d} implies $u$ is sufficiently regular so that $$Au = Hu+\xi u = (u'+Bu)'-Bu'-x^2u$$ holds in $\D'(\R)$.

When there is no noise, it is known that $\mathrm{D}(-\p_x^2+x^2)=\mathrm{D}(-\p_x^2)\cap \mathrm{D}(x^2)$. The following result shows $\mathrm{D}(A)=\mathrm{D}(\p_x^2+\xi)\cap \mathrm{D}(x^2)$, where $\p_x^2+\xi$ is the self-adjoint operator constructed in \cite{DumazLabbeAndersonLoc1D}.

\begin{prop}\label[prop]{Prop-IPP1d}
    The domain of $A$ is given by
    $$\mathrm{D}(A) =\left\{u\in \L^2(\R)\cap\mathcal{AC}(\R),\; u'+Bu\in\mathcal{AC}(\R)\text{ and } (u'+Bu)'-Bu',x^2u\in\L^2(\R)\right\}.$$

    Moreover, for any $u,v\in \mathrm{D}(A)$ and any $a<b$, the following holds
    $$\int_a^b (-A u) v \d x = W[u,v](b)-W[u,v](a) + \int_a^b u (-A v) \d x,$$
    where $W[u,v](x)=u(x)v'(x)-u'(x)v(x)$ is the Wronskian of $u$ and $v$.

\end{prop}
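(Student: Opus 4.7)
My plan is to prove the set equality $\mathrm{D}(A) = \mathrm{D}_0$ by a double inclusion, then derive the integration by parts formula by direct computation. The forward inclusion is a direct verification using \cref{Prop-LocDomain1d}, while the reverse inclusion uses the self-adjointness $A = A^*$ to circumvent a delicate bootstrap argument.

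For $\mathrm{D}(A) \subset \mathrm{D}_0$, I would take $u = \rho v$ with $v \in \W^{2,2}$ and check each defining condition. The $\L^2$ conditions on $u$ and $x^2 u$, together with the absolute continuity of $u$, follow from $\rho, \rho^{-1} \in \L^\infty(\R)$ and $v \in \W^{2,2} \subset \H^2(\R) \subset \mathcal{AC}(\R)$. The key observation for $u' + Bu \in \mathcal{AC}(\R)$ is that integrating the defining equation $-Y'' + x^2 Y = \xi = B'$ yields $(Y'+B)(x) = c_0 + \int_0^x y^2 Y(y) \d y$, which is locally absolutely continuous, so that $u' + Bu = \rho(v' + (Y'+B)v)$ is a product of locally absolutely continuous factors. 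Finally, substituting $\xi u = (Bu)' - Bu'$ in the distributional identity $Au = u'' + \xi u - x^2 u$ gives $(u'+Bu)' - Bu' = Au + x^2 u \in \L^2$.

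For the reverse inclusion $\mathrm{D}_0 \subset \mathrm{D}(A)$, rather than directly proving $v := u/\rho \in \W^{2,2}$ (which would require a delicate truncation--bootstrap to establish $v' \in \L^2$ globally), I would show $u \in \mathrm{D}(A^*)$. Setting $f := (u'+Bu)' - Bu' - x^2 u \in \L^2$, I test against $\phi = \rho\tilde\phi$ with $\tilde\phi \in \mathcal{C}^\infty_c(\R)$: these form a core of $A$ because $\mathcal{C}^\infty_c$ is dense in $\W^{2,2}$ and the exponential transform formula \cref{Eq-FormulaA1d} yields $|A(\rho\tilde\phi_n - \rho v)|_{\L^2} \lesssim |\tilde\phi_n - v|_{\W^{2,2}}$, providing density in graph norm. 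Since $\phi$ is compactly supported and in $\H^{1,\infty}(\R)$, while $u \in \H^1_{loc}(\R)$ (as $u'+Bu \in \mathcal{AC}_{loc} \subset \L^\infty_{loc}$ and $Bu \in \L^2_{loc}$), repeated distributional integration by parts yields
\[
    \int u\,\overline{A\phi}\, \d x = \langle u'', \bar\phi\rangle_{\D'} - \int x^2 u \bar\phi\, \d x + \langle \xi, u\bar\phi\rangle = \int f \bar\phi\, \d x,
\]
using $\H^{-1}/\H^1$ duality on the compact support of $\phi$ for the first pairing and \cref{Lem-ProductRule} to justify the pairing of $\xi \in \W^{-1,\infty}$ against $u\bar\phi \in \W^{1,1}$ of compact support. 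The bound $|\int u\,\overline{A\phi}| \leqslant |f|_{\L^2}|\phi|_{\L^2}$ extends by density to all of $\mathrm{D}(A)$, placing $u \in \mathrm{D}(A^*) = \mathrm{D}(A)$ with $Au = f$.

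With the characterization established, the integration by parts formula is a direct computation. I substitute the Sturm--Liouville form of $-Au$ and $-Av$ on $[a,b]$ and integrate the $(u'+Bu)'$ terms by parts: the volume contributions $\int u'v'$, $\int x^2 uv$, and $\int B(u'v + uv')$ match symmetrically between $\int_a^b (-Au) v \,\d x$ and $\int_a^b u(-Av) \,\d x$, while the boundary contributions yield $[(u'+Bu)v - u(v'+Bv)]_a^b = [u'v - uv']_a^b$ (the $Buv$ pieces cancelling identically), which is exactly $-W[u,v](b) + W[u,v](a)$. The main obstacle is justifying the distributional IBP in the reverse inclusion: $\phi''$ is only a distribution (as $\rho$ is merely $\H^{1,\infty}$-regular), the $\H^{-1}/\H^1$ duality requires careful bookkeeping on compact supports, and the pairing of $\xi$ against $u\bar\phi$ must be legitimated via the product rules in \cref{Sec-ConfiningSobolev}.
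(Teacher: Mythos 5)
Your proposal is correct, and the first inclusion $\mathrm{D}(A)\subset\mathrm{D}_0$ together with the final Wronskian computation match the paper. The reverse inclusion, however, follows a genuinely different route. The paper proves the integration-by-parts formula first for all $u,v\in\mathrm{D}_0$, invokes the result of \cite{DumazLabbeAndersonLoc1D} that $W[u,v](c)\to 0$ as $c\to\pm\infty$ to conclude that the Sturm--Liouville operator $A_0$ on $\mathrm{D}_0$ is \emph{symmetric}, and then uses that a self-adjoint operator is maximal symmetric to get $A=A_0$; the Wronskian identity is thus an ingredient of the domain identification rather than a corollary. You instead show directly that $\mathrm{D}_0\subset\mathrm{D}(A^*)=\mathrm{D}(A)$ by pairing against the core $\rho\,\mathcal{C}^\infty_c(\R)$. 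Your version buys independence from the decay of the Wronskian at infinity (no boundary terms appear, since the test functions are compactly supported), and it only needs the weaker statement $A_0\subset A^*$ rather than full symmetry of $A_0$; the price is the heavier distributional bookkeeping you yourself flag: one must check that $\rho\,\mathcal{C}^\infty_c(\R)$ is a graph-norm core (which does follow from \cref{Eq-FormulaA1d} and density of $\mathcal{C}^\infty_c$ in $\W^{2,2}$), that $A\phi=\phi''+\xi\phi-x^2\phi$ holds in $\D'(\R)$ for $\phi$ in this core (the cancellation of the $\xi\rho\tilde\phi$ terms via $Y''=x^2Y-\xi$, which the paper also uses implicitly in the remark preceding the proposition), and that the $\W^{-1,\infty}/\W^{1,1}$ pairing of $\xi$ with $u\bar\phi$ agrees with the identity $\langle\xi,u\bar\phi\rangle=-\int B(u\bar\phi)'$. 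All of these are sound, so your argument is a valid, somewhat more self-contained but more technical alternative to the paper's maximal-symmetry shortcut.
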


\begin{proof}

    Let $$ \mathrm{D}_0=\left\{u\in \L^2(\R)\cap\mathcal{AC}(\R),\; u'+Bu\in\mathcal{AC}(\R) \text{ and } (u'+Bu)'-Bu',x^2u\in\L^2(\R)\right\},$$ and define $A_0 u=(u'+Bu)'-Bu'-x^2u$ on $\mathrm{D}_0$. Let $u\in \mathrm{D}(A)\subset\W^{1,2}$, then $u\in H^1(\R)$ and $(u'+Bu)'-Bu'-x^2u = Au\in\L^2(\R)$ in the distributional sense. Thus, $u'+Bu\in\mathcal{AC}(\R)$ and $u\in \mathrm{D}_0$, as \cref{Prop-LocDomain1d} implies $x^2u\in\L^2(\R)$, which implies $A\leqslant A_0$. For $u,v\in \mathrm{D}_0$, a direct integration by part gives
    $$\int_a^b (-A_0 u) v \d x = W[u,v](b)-W[u,v](a) + \int_a^b u (-A_0 v) \d x.$$
    It was shown in \cite{DumazLabbeAndersonLoc1D} that $W[u,v](c)$ goes to 0 when $c$ goes to $\pm\infty$. Sending $a$ to $-\infty$ and $b$ to $+\infty$, we obtain that $A_0$ is symmetric. As $A$ is self-adjoint, it is maximal symmetric, thus $\mathrm{D}(A)=\mathrm{D}_0$.
    
\end{proof}

\subsection{Confining Anderson operator in dimension 2}\label{Sec-AndersonOp2d}

Contrarily to the dimension 1, we cannot use anymore $\xi = B'$ as the two-dimensional counterpart is $\xi=\p_1\p_2 B$ where $(B_{x,y})_{x,y\in\R}$ is now a two-dimensional brownian sheet. We should thus look at the quadratic form approach, yet defining directly 
$$(-Au,v)=\re\int_{\R^2}\nabla u\cdot \overline{\nabla v} + |x|^2u\overline{v} \d x - \langle \xi u, v\rangle$$
is hopeless as $\xi\in\W^{-1-,\infty}$ a.s.\ by \cref{Cor-RegWN}, so this quadratic form is ill-defined on the energy space of $-H$. Thus, we use an exponential transform inspired by the one introduced in \cite{Haire_Labbe} to study the continuous parabolic Anderson model, and later used in \cite{debussche_weber_T2,tzvetkov2020dimensional,Tzvetkov_2023} to solve the Anderson non-linear Schrödinger equation on the torus $\T^2$ and in \cite{debussche2017solution,debussche2023global} to solve the same equation on the full space $\R^2$. Let $Y=(-H)^{-1}\xi$, it is almost surely of regularity $1-$, so one can define $\rho=\e^Y$ and search for a formula for $(H+\xi)u$ when $u$ is of the form $u=\rho v$. A formal computation gives
$$(H+\xi)u=\rho\left(Hv+2\nabla Y\cdot \nabla v + xY\cdot xv + |\nabla Y|^2v\right).$$
Unfortunately, $\nabla Y$ is of regularity $0-$, thus the product $|\nabla Y|^2$ barely fails to exist. The author proved in \cite{Mackowiak_2025} that the wick product $\wick{|\nabla Y|^2}$ can be defined as the almost sure limit, in suitable distribution spaces of regularity $0-$, of
$$|\nabla Y_N|^2-\E[|\nabla Y_N|^2],$$
where $Y_N$ is a suitable regularisation of $Y_N$ with only low modes. 

\begin{lemme}{(Proposition 3.11. in \cite{Mackowiak_2025})}

    Let $p\geqslant 1$, $q\in(2,+\infty]$ and $s>\frac{2}{q}$. Then $(\wick{|\nabla Y_N|^2})_{N\in\N}$ is Cauchy in the Banach space $\L^p(\Omega,\W^{-s,q})$. Moreover, denote by $\wick{|\nabla Y|^2}$ its limit, then almost surely, $\wick{|\nabla Y_N|^2}$ converges to $\wick{|\nabla Y|^2}$ in $\W^{-s,q}$.
    
\end{lemme}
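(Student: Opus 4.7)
The plan is to establish $L^p(\Omega,\W^{-s,q})$-Cauchy convergence first, and then promote it to almost sure convergence in $\W^{-s,q}$ via Borel--Cantelli. Writing $Y_N=\sum_{n\leqslant N}\lambda_n^{-2}\xi_n h_n$ with $\xi_n=\langle\xi,h_n\rangle$ i.i.d.\ standard Gaussian, the Wick product expands as
$$\wick{|\nabla Y_N|^2}(x)=\sum_{n,m\leqslant N}\frac{\nabla h_n(x)\cdot\nabla h_m(x)}{\lambda_n^2\lambda_m^2}\bigl(\xi_n\xi_m-\delta_{n,m}\bigr),$$
which, at each $x$, belongs to the second homogeneous Wiener chaos generated by $\xi$. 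This is the key structural observation, because Gaussian hypercontractivity reduces all $L^q(\Omega)$ moments on the second chaos to the $L^2(\Omega)$ moment, up to a constant of order $q-1$.

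First I would mimic the scheme of \cref{Cor-RegWN}: for $q\in(2,+\infty)$, combining Fubini and pointwise hypercontractivity yields
$$\E\bigl[|\wick{|\nabla Y_N|^2}-\wick{|\nabla Y_M|^2}|_{\W^{-s,q}}^q\bigr]\lesssim_q\int_{\R^2}\E\bigl[|(-H)^{-s/2}(\wick{|\nabla Y_N|^2}-\wick{|\nabla Y_M|^2})(x)|^2\bigr]^{q/2}\d x,$$
reducing everything to a pointwise second-moment estimate in $\omega$. Representing $\nabla Y_N(x)=\int_{\R^2}\nabla_xK_N(x,y)\xi(\d y)$ with $K_N$ the Mercer truncation of the kernel $K$ from \cref{Eq-DefK}, the Itô isometry for the second chaos identifies this pointwise moment, up to a factor $2$, with the $L^2(\R^4)$ norm squared of $(-H)^{-s/2}_x\bigl(\nabla_xK_N(\cdot,y_1)\cdot\nabla_xK_N(\cdot,y_2)-\nabla_xK_M(\cdot,y_1)\cdot\nabla_xK_M(\cdot,y_2)\bigr)(x)$ in $(y_1,y_2)$.

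The technical core is then to bound this pointwise $L^2_{y_1,y_2}$ norm by a quantity of the form $\lambda_M^{-2\eps}\Phi_{s,\eps}(x)$, where $\Phi_{s,\eps}(x)$ is a kernel-type norm comparable to $|K(x,\cdot)|_{\W^{2+\eps-s,2}_y}^2$. This amounts to absorbing the derivative loss from $\nabla\otimes\nabla$ into the smoothing $(-H)^{-s/2}$, using the product rules \cref{Lem-ProductRule,Cor-ProductRuleNegPosReg} and the action of $\nabla$ and polynomial weights on $\W^{t,p}$-scales (\cref{Prop-action_d/dx,Cor-action_V}). Integrating $\Phi_{s,\eps}^{q/2}$ in $x$ and invoking \cref{Lem-KLrWa2} with $\alpha=2+\eps-s$ gives finiteness exactly when $s>\frac{2}{q}+\eps$, i.e.\ the announced sharp threshold $s>\frac{2}{q}$ after sending $\eps\to0$, together with a geometric Cauchy rate $\lambda_M^{-\eps q}$. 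The case $q=+\infty$ is recovered from the $q$-finite estimate by Sobolev embedding (\cref{Sobolev-embeddings}), and control on every $L^p(\Omega,\W^{-s,q})$ with $p\geqslant 1$ follows by Jensen's inequality ($p\leqslant q$) and by Minkowski's integral inequality combined with a second application of hypercontractivity ($p>q$). Almost sure convergence in $\W^{-s,q}$ is then obtained by Markov and Borel--Cantelli along any sufficiently rapidly growing subsequence $(N_k)$.

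The main obstacle is the pointwise kernel estimate in the third step. The unrenormalized $|\nabla Y_N|^2$ carries a logarithmically divergent mean that the Wick subtraction precisely removes, and one must carefully identify which components of the chaos decomposition cancel in the difference $\wick{|\nabla Y_N|^2}-\wick{|\nabla Y_M|^2}$, and then balance the two derivatives against the smoothing of $(-H)^{-s/2}$ so that the final bound lands exactly on the threshold $s>\frac{2}{q}$ dictated by \cref{Lem-KLrWa2}. The confining structure of $H$ is essential here, both to make the kernel $K$ well-defined and to provide the $x$-decay that makes the integral in the $L^q_x$ norm finite.
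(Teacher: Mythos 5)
First, a point of order: the paper does not prove this lemma at all --- it is imported verbatim as Proposition 3.11 of \cite{Mackowiak_2025}, so there is no internal proof to match your attempt against. Your scaffolding is nonetheless the standard (and surely the intended) one: the chaos expansion $\wick{|\nabla Y_N|^2}(x)=\sum_{n,m\leqslant N}\lambda_n^{-2}\lambda_m^{-2}\nabla h_n(x)\cdot\nabla h_m(x)(\xi_n\xi_m-\delta_{n,m})$ is correct, hypercontractivity on the second Wiener chaos legitimately reduces all $\L^q(\Omega)$ moments to second moments, the Fubini reduction mimics \cref{Cor-RegWN}, and Markov plus Borel--Cantelli (with the rate $\lambda_M^{-2\eps}$ summable after taking $p$ large) delivers the almost sure convergence. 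Those outer layers are fine.

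The genuine gap is exactly where you locate it, and it is not a detail: the pointwise second-moment estimate \emph{is} the lemma, and the route you sketch for it does not close. Concretely, if $\Phi_{s,\eps}(x)$ were comparable to $|K(x,\cdot)|_{\W^{2+\eps-s,2}_y}^2$, then $\int_{\R^2}\Phi_{s,\eps}^{q/2}\,\d x=|K|_{\L^q_x\W^{2+\eps-s,2}_y}^q$, and \cref{Lem-KLrWa2} (with $d=2$) requires $2+\eps-s<1-\tfrac{2}{q}$, i.e.\ $s>1+\tfrac{2}{q}+\eps$ --- one full unit worse than the announced threshold $s>\tfrac{2}{q}$. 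This is not an accident of bookkeeping: any argument that controls the double sum $\sum_{n,m}|(-H)^{-s/2}(\lambda_n^{-2}\lambda_m^{-2}\nabla h_n\cdot\nabla h_m)(x)|^2$ by Cauchy--Schwarz in $(n,m)$, i.e.\ by the square of a first-chaos quantity, necessarily lands at the regularity of an unrenormalized product of two $\W^{0-,\cdot}$ objects and cannot see the gain coming from the Wick subtraction. The actual mechanism is that the second-chaos variance equals (twice) the square of the \emph{covariance} $\E[\nabla Y(x)\cdot\nabla Y(x')]$, suitably smoothed, and this covariance has only a logarithmic singularity on the diagonal whose square is integrable --- that off-diagonal cancellation, not a product rule on $\W^{s,p}$ scales, is what produces the threshold $s>\tfrac{2}{q}$. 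As written, your proposal names the obstacle honestly but neither overcomes it nor proposes machinery that could; the step you defer is the entire content of Proposition 3.11 of \cite{Mackowiak_2025}.
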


Thus, the renormalized operator can be formally defined as
\begin{equation}
    (H+\xi)u=\rho\left(Hv+2\nabla Y\cdot \nabla v + xY\cdot xv + \wick{|\nabla Y|^2}v\right).\label{Eq-FormulaH+Xi}
\end{equation}
This shows that knowing a trajectory of $\xi$ is not sufficient to define the operator $H+\xi$. It is a usual fact in the study of Anderson operators that one needs more data in order to define the operator (see for example \cite{allez2015continuous,mouzard2021weyl,bailleul2022analysis,hsu2024construction}). In what follows, in order to shorten the notations, we may write $Z=\wick{|\nabla Y|^2}$ and $\Xi=(Y,Z)$ the enhanced noise.\\

The formula \cref{Eq-FormulaH+Xi} does not make sense directly, but as usual, the bilinear form of the operator can be defined rigorously. Recall the space $\D^{1,2}=\rho\W^{1,2}$, as in dimension 1, and endow it with the norm
\begin{equation}\label{Eq-D12nom}
    \forall u=\rho v\in \D^{1,2},\; |u|_{\D^{1,2}}^2=\int_{\R^2}\left(|\nabla v|^2+|xv|^2\right)\rho^2(x)\d x.
\end{equation}
Remark that this norm verifies
\begin{equation}
    \forall v\in \W^{1,2},\; (\inf\rho)|v|_{\W^{1,2}}\leqslant|\rho v|_{\D^{1,2}}\leqslant(\sup\rho)|v|_{\W^{1,2}},\label{Eq-NormEquivD12}
\end{equation}
thus it immediately follows that $(\D^{1,2},|\cdot|_{\D^{1,2}})$ is a Hilbert space. For any $u_i=\rho v_i\in \D^{1,2}$, formally, one can write
$$\langle-(H+\xi)u_1,u_2\rangle=\re \int_{\R^2}\left(\nabla v_1\cdot \overline{\nabla v_2} + |x|^2(1-Y)v_1\overline{v_2}\right)\rho^2\d x - \langle Z, \overline{v_1}v_2\rho^2\rangle.$$
Let define the bilinear form
\begin{equation}
    \forall u_1,u_2\in \D^{1,2},\; a(u_1,u_2)=\re \int_{\R^2}\left(\nabla v_1\cdot \overline{\nabla v_2} + |x|^2(1-Y)v_1\overline{v_2}\right)\rho^2\d x - \langle Z, \overline{v_1}v_2\rho^2\rangle,\label{Eq-QuadraticFormH+xi}
\end{equation}
where $u_i=\rho v_i$. The author proved in \cite{MackowiakStrichartzConfAnderson} this bilinear form is well-defined, symmetric, continuous and quasi-coercive on $\D^{1,2}$.

\begin{prop}{(Proposition 3.4. in \cite{MackowiakStrichartzConfAnderson})}\label[prop]{Prop-Quasi-coercivity-a-2d}

    There exists $C_\Xi,\delta_\Xi>0$ such that
    $$\forall u\in \D^{1,2},\; \frac{1}{2}|u|_{\D^{1,2}}^2\leqslant a(u,u)+\delta_\Xi|u|_{\L^2}^2 \leqslant C_\Xi|u|_{\D^{1,2}}^2.$$
    
\end{prop}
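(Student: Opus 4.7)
I would follow the same template as \cref{Prop-Quasi-coercivity-a-1d}: start from the explicit formula \cref{Eq-QuadraticFormH+xi}, isolate the free quadratic-form piece $|u|_{\D^{1,2}}^2$, and bound the two remaining noise contributions by $\eta|u|_{\D^{1,2}}^2+C_{\eta,\Xi}|u|_{\L^2}^2$ for arbitrarily small $\eta$. Writing $u=\rho v$ and using $|v|^2\rho^2=|u|^2$, one gets
\begin{equation*}
    a(u,u)=|u|_{\D^{1,2}}^2 - \int_{\R^2}|x|^2 Y\,|v|^2\rho^2\,\d x - \langle Z,|u|^2\rangle.
\end{equation*}
The upper bound then amounts to controlling each of the two correction terms by a constant multiple of $|u|_{\D^{1,2}}^2$, and the lower bound is obtained by choosing $\eta\leqslant 1/2$.

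For the first correction, \cref{Cor-RegWN} together with \cref{Sobolev-embeddings} gives $Y\in\W^{1-\eps,q}\hookrightarrow\L^\infty$ almost surely, yielding the crude upper bound
$$\left|\int_{\R^2}|x|^2 Y\,|v|^2\rho^2\,\d x\right|\leqslant |Y|_{\L^\infty}|u|_{\D^{1,2}}^2.$$
For the lower bound I would use a spectral splitting $Y=Y_N+(Y-Y_N)$ with $Y_N$ the projection onto the first $N$ Hermite modes. Since $Y_N$ is a finite combination of Hermite functions, $|x|^2 Y_N\in\L^\infty$ with some constant $C_N$, producing a term of size $C_N|u|_{\L^2}^2$. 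The tail $Y-Y_N$ tends to zero in $\W^{1-\eps,q}$ with $q$ large, hence in $\L^\infty$ by embedding, so for $N$ large its contribution is at most $\eta|u|_{\D^{1,2}}^2$.

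For the renormalized term I dualize: $Z\in\W^{-s,q}$ almost surely for suitable $s>2/q$, so
$$|\langle Z,|u|^2\rangle|\leqslant |Z|_{\W^{-s,q}}\bigl||u|^2\bigr|_{\W^{s,q'}}.$$
\cref{Lem-ProductRule} applied to $u\cdot\bar u$ with the balanced integrability $p=2q/(q-1)$, combined with \cref{Sobolev-embeddings}, gives $\bigl||u|^2\bigr|_{\W^{s,q'}}\lesssim |u|_{\W^{1,2}}^2$ for $s$ sufficiently small, and the norm equivalence \cref{Eq-NormEquivD12} then converts this into $\lesssim|u|_{\D^{1,2}}^2$. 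The same truncation strategy then applies with $Z_N=|\nabla Y_N|^2-\E[|\nabla Y_N|^2]$ from \cite{Mackowiak_2025}: $Z_N$ is smooth, hence bounded, contributing $C_N|u|_{\L^2}^2$, while $|Z-Z_N|_{\W^{-s,q}}\to 0$ a.s.\ shrinks the tail.

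The main obstacle is the book-keeping of the exponents $(s,q,p)$ in the product-rule estimate for $|u|^2$; once those are fixed, the truncation arguments for $Y$ and $Z$ are parallel. A secondary technical point is the $\L^\infty$-convergence $Y_N\to Y$, which is not automatic for Hermite partial sums but should follow from a.s.\ convergence in $\W^{s,q}$ with $s>2/q$ (the same machinery used in the proof of \cref{Cor-RegWN}) and the Sobolev embedding into $\L^\infty$.
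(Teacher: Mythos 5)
First, note that the paper does not actually prove this statement: it is imported verbatim as Proposition 3.4 of \cite{MackowiakStrichartzConfAnderson}, so the only in-paper object to compare against is the one-dimensional analogue \cref{Prop-Quasi-coercivity-a-1d}. Your plan is sound and is essentially the standard form-boundedness argument, but your absorption mechanism differs from the one the paper uses in dimension 1: there, the noise pairing is bounded by $|\xi|_{\W^{-\kappa,\infty}}|u|_{\W^{\kappa,2}}^2$ and then absorbed via interpolation of $\W^{\kappa,2}$ between $\W^{1,2}$ and $\L^2$ plus Young's inequality, whereas you absorb by splitting $Y$ and $Z$ into a smooth low-Hermite-mode part (contributing $C_N|u|_{\L^2}^2$) and a tail that is small in the relevant negative-regularity norm. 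Both routes work; the interpolation route avoids having to upgrade the convergences $Y_N\to Y$ in $\L^\infty$ and $Z_N\to Z$ in $\W^{-s,q}$ to almost sure statements (which, as you note, requires a Borel--Cantelli step on top of the $\mathbb{L}^q(\Omega)$ convergence in \cref{Cor-RegWN}), while your truncation route is arguably more elementary once those convergences are in hand. Your exponent bookkeeping for the $Z$-term ($s>2/q$, $s\leqslant 1-1/q$, hence $q>3$) closes correctly.

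One step as written would not compile into a proof and needs repair: the intermediate bound $\bigl||u|^2\bigr|_{\W^{s,q'}}\lesssim |u|_{\W^{1,2}}^2$. In dimension 2 an element $u=\rho v\in\D^{1,2}=\rho\W^{1,2}$ need not belong to $\W^{1,2}$ at all ($\rho=\e^Y$ only multiplies $\W^{\sigma,2}$ into itself for $\sigma<1$), so $|u|_{\W^{1,2}}$ is not controlled by $|u|_{\D^{1,2}}$ and \cref{Eq-NormEquivD12} does not convert one into the other; that equivalence relates $|\rho v|_{\D^{1,2}}$ to $|v|_{\W^{1,2}}$, not to $|u|_{\W^{1,2}}$. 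The fix is immediate and in the spirit of what you wrote: write $|u|^2=\rho^2|v|^2$, apply \cref{Lem-ProductRule} once to $\rho^2\cdot|v|^2$ with $\rho^2\in\H^{s,\infty}$ (valid since $s<1$) and once to $v\cdot\bar v$, obtaining $\bigl||u|^2\bigr|_{\W^{s,q'}}\lesssim|v|_{\W^{1,2}}^2$, and only then invoke \cref{Eq-NormEquivD12}. The same remark applies to the $Y$-term, where your pointwise bound is already phrased on the $v$-side and is fine.
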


 Hence, Theorem 6.2.6 in \cite{KatoPerturbationBook} implies the existence of a unique lower-bounded self-adjoint operator with compact resolvent $-A:\mathrm{D}(-A)\subset\L^2(\R^2)\to\L^2(\R^2)$ such that 
 $$\forall u_1,u_2\in \D^{1,2},\; a(u_1,u_2) = \langle-Au_1,u_2\rangle_{\D^{-1,2},\D^{1,2}}.$$
Moreover, one can characterize the first Sobolev spaces of $A$.
\begin{prop}{(Proposition 3.5. in \cite{MackowiakStrichartzConfAnderson})}\label[prop]{Prop-CaracDs2}
    For any $s\in(-2,2)$, $\D^{s,2} = \mathrm{D}\left(|A|^{\frac{s}{2}}\right)$ with equivalent norm.
\end{prop}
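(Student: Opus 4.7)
The plan is to anchor the identification at $s=1$ using the form construction of $A$, extend to $s=-1$ by duality, to $s\in(-1,1)$ by complex interpolation, and finally to $1<|s|<2$ by elliptic regularity for the renormalized operator.

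For $s=1$: by \cref{Prop-Quasi-coercivity-a-2d}, the shifted form $a+\delta_\Xi(\cdot,\cdot)_{\L^2}$ is symmetric, continuous, coercive, and densely defined on $\D^{1,2}$, hence closed, with associated norm equivalent to $|\cdot|_{\D^{1,2}}$. Kato's representation theorem (Theorem 6.2.6 in \cite{KatoPerturbationBook}) — the very tool already used to define $A$ — identifies it with the form of the positive self-adjoint operator $-A+\delta_\Xi$ and gives $\mathrm{D}((-A+\delta_\Xi)^{1/2})=\D^{1,2}$ with equivalent norms. Since $-A$ is semibounded with compact resolvent, only finitely many eigenvalues lie below any given threshold, and an easy comparison using the spectral decomposition on the finite-dimensional low-eigenvalue subspace yields $\mathrm{D}(|A|^{1/2})=\mathrm{D}((-A+\delta_\Xi)^{1/2})$ with equivalent norms. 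The case $s=-1$ then follows from \cref{Def-Dsq}, namely $\D^{-1,2}=(\D^{1,2})'$, together with the standard duality $\mathrm{D}(|A|^{-1/2})=\mathrm{D}(|A|^{1/2})'$ under the $\L^2$ pivot (working, if necessary, with $|A|+1$ to absorb a possible zero eigenvalue).

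For $s\in(-1,1)$, I would apply complex interpolation. The functional calculus for the positive self-adjoint operator $-A+\delta_\Xi$ gives $\mathrm{D}(|A|^{s/2})=[\L^2,\mathrm{D}(|A|^{1/2})]_s$ for $s\in(0,1)$. On the Sobolev-Hermite side, $\W^{s,2}=[\L^2,\W^{1,2}]_s$ by the analogous interpolation for $-H$, and since multiplication by $\rho$ is an isomorphism of $\L^2$ (thanks to $\rho,\rho^{-1}\in\L^\infty$ under the event on which \cref{Eq-NormEquivD12} holds) and of $\W^{1,2}\to\D^{1,2}$ by construction, functoriality of the complex bracket under isomorphisms yields $\D^{s,2}=\rho[\L^2,\W^{1,2}]_s=[\L^2,\D^{1,2}]_s$. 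Combined with the case $s=1$ this settles $s\in(0,1)$, and duality handles $s\in(-1,0)$.

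The main obstacle is the range $1<|s|<2$. In sharp contrast with the one-dimensional case (compare \cref{Prop-LocDomain1d}), the endpoint $s=2$ does not admit a clean identification $\mathrm{D}(A)=\rho\W^{2,2}$: the renormalized term $\wick{|\nabla Y|^2}$ in \cref{Eq-FormulaH+Xi} has regularity strictly below zero, so that $A$ fails to send $\rho\W^{2,2}$ into $\L^2$. I would bypass this by proving directly that the resolvent $R=(-A+\delta_\Xi)^{-1}$ maps $\D^{s-2,2}$ continuously into $\D^{s,2}$ for $s\in(1,2)$, via a perturbative argument on \cref{Eq-FormulaH+Xi}: writing $(-A+\delta_\Xi)u=\rho[(-H+\delta_\Xi)v-2\nabla Y\cdot\nabla v-xY\cdot xv-Zv]$ with $v=\rho^{-1}u$, invert the principal part $\rho(-H+\delta_\Xi)\rho^{-1}$ between $\D^{s-2,2}$ and $\D^{s,2}$ using the spectral theory of $-H$, then absorb the three remainders by a Neumann series after a suitably large spectral shift, using \cref{Lem-ProductRule,Cor-ProductRuleNegPosReg} and the regularity of $Y$ and $Z$ (strictly better than $-1$) to show each loses strictly less than two derivatives on the $\W^{s,2}$-scale. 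Combining this mapping property with the spectral identity $\mathrm{D}(|A|^{s/2})=R^{s/2}(\L^2)$, the previous step, and duality gives the equivalence on the whole range $(-2,2)$; the exclusion of $s=\pm 2$ is intrinsic, tied to the regularity threshold of $Z$.
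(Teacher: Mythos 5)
This statement is quoted from the companion paper \cite{MackowiakStrichartzConfAnderson} (Proposition 3.5 there) and is not proved in the present paper, so there is no in-text proof to compare against; your proposal has to be judged on its own merits, and on those terms it follows the standard route for such identifications in the Anderson-operator literature and is essentially sound. The anchoring at $s=1$ via the representation theorem for the closed, coercive form $a+\delta_\Xi(\cdot,\cdot)_{\L^2}$ is correct (note that it is the \emph{second} representation theorem that identifies $\mathrm{D}((-A+\delta_\Xi)^{1/2})$ with the form domain, not merely the statement used to construct $A$); the interpolation step for $s\in(0,1)$ is legitimate because $v\mapsto\rho v$ is a common isomorphism of both endpoints of the couple $(\L^2,\W^{1,2})$, and indeed $\D^{s,2}=\W^{s,2}$ in this range; and you are right that the genuine content lies in $1<|s|<2$, where interpolation up to $s=2$ is unavailable precisely because $\mathrm{D}(A)\neq\rho\W^{2,2}$ in dimension $2$. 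Your accounting of derivative losses there is correct: for $s\in(1,2)$ and $\eps<\min(s-1,2-s)$, each of $2\nabla Y\cdot\nabla v$, $xY\cdot xv$, $Zv$ maps $\W^{s,2}$ into $\W^{-\eps,2}\subset\W^{s-2,2}$, so the composition with $(-H+\delta)^{-1}$ gains a positive power of $\delta^{-1}$ and the Neumann series closes.

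Two points deserve explicit attention in a written-out version. First, enlarging the spectral shift to make the Neumann series converge produces \emph{a} bounded inverse of $\rho(-H+\delta)\rho^{-1}-(\text{perturbation})$ on the scale $\W^{s-2,2}\to\W^{s,2}$; you must still verify that this inverse coincides with the resolvent of the form-defined operator $-A+\delta$ (e.g.\ by checking the form identity $a(u,\phi)+\delta(u,\phi)=\langle f,\phi\rangle$ for $u$ in its range and all $\phi\in\D^{1,2}$, then invoking uniqueness of the resolvent); without this the mapping property is established for the wrong operator. Second, the concluding step is cleaner if phrased as $\mathrm{D}((-A+\delta)^{s/2})=(-A+\delta)^{-1}\mathrm{D}((-A+\delta)^{(s-2)/2})$ together with the already-proved identification $\mathrm{D}((-A+\delta)^{(s-2)/2})=\D^{s-2,2}$ for $s-2\in(-1,0)$, rather than via $R^{s/2}(\L^2)$, and it requires the resolvent to be an isomorphism \emph{onto} $\D^{s,2}$, not merely bounded into it — which the Neumann series does give. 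With these verifications spelled out, the argument is complete.
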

In particular, $\mathrm{D}(-A)$ is dense and compactly embedded in any $\D^{s,2}$  with $|s|<2$, allowing to make sense of the formal formula \cref{Eq-FormulaH+Xi}.\\

As in dimension 1, when $\xi$ is a white noise, we get a maximal regularity result for elements of the domain, which relies on Sazonov theorem.

\begin{cor}\label[cor]{Cor-MaximalRegDomain2d}
    Almost surely, any $u\in\mathrm{D}(A)$ which belongs to $\mathrm{H}^{1,q}(U)$ for some open set $U\subset\R^2$ and $q\in[2,+\infty]$ vanishes everywhere on $U$.
\end{cor}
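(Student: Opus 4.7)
The plan is to adapt the proof of \cref{Cor-MaximalRegDomain1d} to dimension 2, with the 1d Sturm--Liouville formula replaced by the exponential transform $u=\rho v$ from \cref{Sec-AndersonOp2d}, and the contradiction ``$\xi\notin\H^{-\frac12}$ on subintervals'' replaced by ``$\xi\notin\H^{-1}(R)$ on every open rectangle $R\subset\R^2$ with rational corners''. First I would fix a full-probability event $\O_0$ on which (i) $Y=(-H)^{-1}\xi\in\H^{1,\infty}(\R^2)$, a consequence of \cref{Cor-RegWN} and Sobolev embeddings, and (ii) $\xi\notin\H^{-1}(R)$ for every rational rectangle $R$. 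Property (ii) follows from Sazonov's theorem applied to the restriction of $\xi$ to $R$, which is a white noise on $R$: the canonical embedding $\L^2(R)\hookrightarrow\H^{-1}(R)$ has singular values $\lambda_n^{-1/2}$, where $\lambda_n$ are the Dirichlet eigenvalues of $-\Delta$ on $R$, and Weyl's law $\lambda_n\sim n$ in dimension 2 implies $\sum\lambda_n^{-1}=+\infty$, so the embedding is not Hilbert--Schmidt. Countability of rational rectangles leaves a single exceptional null set.

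Assume for contradiction that $u\in\mathrm{D}(A)$ belongs to $\H^{1,q}(U)$ for some open $U$ and some $q\in[2,+\infty]$, and does not vanish identically on $U$. Writing $u=\rho v$, \cref{Prop-CaracDs2} gives $\mathrm{D}(A)\subset\D^{s,2}=\rho\W^{s,2}$ for every $s<2$, hence $v\in\W^{s,2}$ for every $s<2$, and 2d Sobolev embeddings (\cref{Sobolev-embeddings} combined with the norm equivalence $|v|_{\W^{s,2}}\approx|v|_{\H^{s,2}}+|\langle x\rangle^s v|_{\L^2}$) yield $v\in\mathcal{C}(\R^2)$. Since $\rho>0$, $v$ does not vanish identically on $U$, and continuity produces an open bounded neighborhood in $U$ on which $|v|\geqslant c>0$; inside it I pick a rational rectangle $R$. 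On this bounded $R$, $\H^{1,q}(U)\subset\H^1(R)$, so in particular $\nabla u\in\L^2(R)$.

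On $R$, the Leibniz rule, valid because $\rho,\rho^{-1}\in\H^{1,\infty}_{loc}$ and $v\in\H^1_{loc}$, reads in $\D'(R)$
\[
v\,\nabla Y \;=\; \rho^{-1}\nabla u-\nabla v.
\]
The right-hand side lies in $\L^2(R)$, so $v\,\nabla Y\in\L^2(R)$; since $|v|\geqslant c$ on $R$, we have $1/v\in\L^\infty(R)$ and dividing yields $\nabla Y\in\L^2(R)$, i.e.\ $Y\in\H^1(R)$. The global identity $-HY=\xi$ then reads $\xi=-\Delta Y+|x|^2Y$, and on the bounded rectangle $R$ we have $|x|^2Y\in\L^\infty(R)$ together with $\Delta Y\in\H^{-1}(R)$ (the distributional divergence of an $\L^2$ vector field), whence $\xi\in\H^{-1}(R)$, contradicting (ii). The main technical point is the rigorous distributional accounting for the Leibniz identity and for the division by $v$; both reduce to standard $\H^1_{loc}$--$\L^\infty$ multiplier rules once continuity and non-vanishing of $v$ on $R$ have been secured, and the reduction from general $q\in[2,+\infty]$ to $q=2$ is the same bounded-domain trick as in 1d.
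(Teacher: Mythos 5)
Your proof follows the same route as the paper's: restrict to a small ball or rectangle on which $|v|\geqslant c>0$, use the product rule on $\nabla(\rho v)$ to isolate $v\nabla Y$, divide by $v$ to conclude $\nabla Y\in\L^2$ locally, deduce $\xi=-\Delta Y+|x|^2Y\in\H^{-1}$ locally, and contradict Sazonov's theorem on a countable family of rational balls (the paper) or rectangles (you). One of your claims is wrong, however, and you should be aware of it: in dimension $2$ one does \emph{not} have $Y\in\H^{1,\infty}(\R^2)$ almost surely. Since $\xi\in\W^{-1-,\infty}$ only, $Y=(-H)^{-1}\xi$ has regularity $1-$, so $\nabla Y$ is a genuine distribution of regularity $0-$; this is precisely why $|\nabla Y|^2$ must be renormalized in \cref{Sec-AndersonOp2d}. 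Worse, your hypothesis (i) is incompatible with the statement you are proving: if $Y$, hence $\rho$ and $\rho^{-1}$, were in $\H^{1,\infty}_{loc}$, then every $u=\rho v\in\mathrm{D}(A)$ would belong to $\H^{1,q}_{loc}$ for every $q$, and the corollary would force $\mathrm{D}(A)=\{0\}$. The correct event to work on is $\{Y\in\L^\infty(\R^2)\}$ (as in the paper), and the Leibniz identity $\nabla(\rho v)=(\nabla v+v\nabla Y)\rho$ in $\mathcal D'(R)$ must be justified not by $\rho\in\H^{1,\infty}_{loc}$ but by the product rules of \cref{Sec-ConfiningSobolev}: $v\in\W^{2-,2}$ has strictly positive local Hölder regularity while $\nabla Y\in\W^{0-,\infty}$, so the product $v\nabla Y$ is well defined (the sum of regularities is positive) and the identity follows by regularizing $Y$. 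With that repair, the remainder of your argument — including the Weyl-law verification that $\L^2(R)\hookrightarrow\H^{-1}(R)$ is not Hilbert--Schmidt in dimension $2$ — goes through and matches the paper's proof.
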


\begin{proof}[Proof of \cref{Cor-MaximalRegDomain2d}.]

    We work on the event 
    $$\O_0=\{Y\in\L^{\infty}(\R^2)\}\cap\bigcap_{r\in\mathbb{Q}^2}\bigcap_{n\in\N}\left\{\xi\notin\mathrm{H}^{-1}\left(B(r,2^{-n})\right)\right\}$$
    which is of probability 1 by \cref{Cor-RegWN}, Sazonov theorem and Sobolev embeddings. By \cref{Prop-CaracDs2}, $\mathrm{D}(A)\subset\rho\W^{2-,2}$. Let $v\in\W^{2-,2}$ such that $v_{|U}\neq 0$ and $\rho v\in \mathrm{H}^{1,q}(U)$, there exists $r\in\mathbb{Q}^2$, $n\in\N$ and $c>0$ such that $B=B(r,2^{-n})\subset U$ and
    $$\forall x\in B,\; |v(x)|>c.$$  
    It follows from $\nabla(\rho v)=(v\nabla Y+\nabla v)\rho$ that
    $\nabla Y\in \L^{q}(B)\subset\L^2(B)$ as $\rho^{-1}\in\L^\infty(B)$, $\nabla v\in\W^{1-,2}\subset\L^2(B)$ and $v^{-1}\in\L^\infty(B)$. As $VY\in\L^\infty(B)\subset\L^2(B)$, it holds $\xi\in\mathrm{H}^{-1}(B)$, which is impossible on $\O_0$. Thus $v_{|U}= 0$, i.e.\ $u_{|U}= 0$.
    
\end{proof}

Let $p\in(2,+\infty]$ and $W\in\L^p(\R^2,\R)$. As in dimension 1, one can use the Sobolev injection of $\W^{1-,2}$ in $\L^q(\R^2)$ for any $q\in[2,+\infty)$ and the previous construction to show $(-A+W,\mathrm{D}(A))$ is still a well-defined self-adjoint operator of compact inverse. We want to show that $A+W$ has the spectral gap property, i.e.

\begin{prop}\label[prop]{Prop-SpectralGapA2d}
    Let $p\in(2,+\infty]$ and $W\in\L^p(\R^2,\R)$. Then the lowest eigenvalue of $-A+W$ is simple and the associated eigenfunction can be chosen positive.
\end{prop}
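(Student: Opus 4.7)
The plan is to adapt the standard Perron--Frobenius argument based on the variational characterization of $\mu_0$. Since $-A+W$ has compact resolvent by \cref{Prop-CaracDs2} together with \cref{Sobolev-embeddings}, the lowest eigenvalue admits the Rayleigh quotient expression
\[
\mu_0 = \inf_{u \in \D^{1,2}\setminus\{0\}} \frac{a(u,u) + \int_{\R^2} W|u|^2\,\d x}{|u|_{\L^2}^2},
\]
attained on a finite-dimensional ground-state eigenspace $E_0 \subset \mathrm{D}(A)$. I would show successively that (i) $E_0$ contains a non-negative element, (ii) every non-negative element of $E_0$ is in fact strictly positive, and (iii) $\dim_\C E_0 = 1$.

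Step (i) is Kato's inequality in the $v$ variable. For $u = \rho v \in \D^{1,2}$, one has $|u| = \rho|v|$ since $\rho > 0$, with $|v| \in \W^{1,2}$ and the pointwise estimate $|\nabla|v||^2 \leq |\nabla v|^2$ almost everywhere. Inserting this into the explicit expression \cref{Eq-QuadraticFormH+xi} and noting that the remaining terms $\langle Z\rho^2,|v|^2\rangle$, $\int |x|^2(1-Y)|v|^2\rho^2 \d x$ and $\int W|v|^2\rho^2 \d x$ depend on $|v|^2$ only, one obtains
\[
a(|u|,|u|) + \int W|u|^2 \d x \leq a(u,u) + \int W|u|^2 \d x,
\]
together with $|\,|u|\,|_{\L^2} = |u|_{\L^2}$. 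Therefore $|\phi| \in E_0$ whenever $\phi \in E_0$.

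Step (ii) is the main technical obstacle. Writing $\phi = \rho w$ with $w \geq 0$, one has $w \in \W^{2-,2}$; since in dimension 2 this space embeds into $\mathcal{C}^0(\R^2)$ by \cref{Sobolev-embeddings}, $w$ is continuous. To upgrade $w \geq 0$ to $w > 0$ pointwise one needs a form of strong maximum principle or Harnack inequality, which is delicate because $Z = \wick{|\nabla Y|^2}$ is only a distribution and the classical theory does not apply directly. The plan is a regularization argument: let $\phi_N$ be the ground state of the regularized operator $-A_N + W$ built from $(Y_N, Z_N)$; each $-A_N + W$ has bounded zeroth-order term, so the classical strong maximum principle yields $\phi_N > 0$. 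Combining a local Harnack estimate uniform in $N$ with the convergence $A_N \to A$ established in \cite{Mackowiak_2025, MackowiakStrichartzConfAnderson}, one then obtains a positive pointwise lower bound for $\phi$ on every compact set, hence $w > 0$ on $\R^2$. Equivalently, one may prove positivity improvement of the semigroup $e^{-t(-A+W)}$ directly: positivity preservation is the Beurling--Deny counterpart of step (i), and irreducibility on $\R^2$ follows from a Feynman--Kac type representation.

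For step (iii), suppose $\phi_1, \phi_2 \in E_0$ are $\C$-linearly independent. Since $-A + W$ commutes with complex conjugation, taking real and imaginary parts produces two $\R$-linearly independent real eigenfunctions $\psi_1, \psi_2 \in E_0$. Both are continuous, so one can choose real $(c_1, c_2) \neq (0,0)$ such that $\phi := c_1\psi_1 + c_2\psi_2 \in E_0$ is not identically zero but vanishes at some $x_0 \in \R^2$. By step (i), $|\phi| \in E_0$; by step (ii), $|\phi| > 0$ everywhere, contradicting $|\phi|(x_0) = 0$. Hence $\dim E_0 = 1$, and step (i) applied to any generator produces a non-negative eigenfunction, which is strictly positive by step (ii).
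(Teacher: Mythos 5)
Your steps (i) and (iii) are sound: the diamagnetic inequality $|\nabla|v||\leqslant|\nabla v|$ in the $v$ variable does show that the ground-state eigenspace is stable under $\phi\mapsto|\phi|$, and the linear-algebra argument deducing simplicity from strict positivity is standard and correct. The genuine gap is step (ii), which you yourself flag as ``the main technical obstacle'' but then only sketch. The proposed route via a Harnack estimate \emph{uniform in $N$} for the regularized operators $-A_N+W$ is not justified and is in fact the crux of the difficulty: classical Harnack constants for $-\Delta+V$ depend on Lebesgue norms of the zeroth-order term, and here the renormalized potentials $Z_N=|\nabla Y_N|^2-\E[|\nabla Y_N|^2]$ are unbounded in every $\L^p$ as $N\to\infty$ (they only converge in spaces of regularity $0-$), so the classical constants degenerate and no uniform local lower bound follows without substantial new input. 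The alternative you mention in passing --- irreducibility ``from a Feynman--Kac type representation'' --- is equally unsubstantiated: such a representation for the two-dimensional renormalized Anderson operator is itself a nontrivial result and is nowhere established in this paper or its references. As written, strict positivity of the ground state is assumed rather than proved, and everything downstream (simplicity) rests on it.

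The paper avoids this obstacle entirely by working at the level of the quadratic form: it invokes the Ouhabaz criterion (\cref{Prop:PosIrredSemiGroup}), under which positivity of the semigroup is your step (i) in form language, while \emph{irreducibility} is characterized by the statement that any $D\subset\R^2$ with $\indic{D}u\in\D^{1,2}$ for all $u$ in a dense set must satisfy $|D|=0$ or $|D^c|=0$. Via the exponential transform this reduces to the same assertion for $\H^1(\R^2)$, i.e.\ to the known irreducibility of the free heat semigroup, with no pointwise analysis of eigenfunctions, no Harnack inequality, and no regularization. Strict positivity and simplicity then follow from Krein--Rutman. If you want to keep your direct Perron--Frobenius structure, you need to replace step (ii) by an argument of this soft type (or genuinely prove a uniform Harnack bound, which would require controlling $Z_N$ in a way the known estimates do not provide).
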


To show $T:\mathrm{D}(T)\subset\L^2(\R^d)\to\L^2(\R^d)$ has a spectral gap, it is a classical consequence of the Krein-Rutman theorem that a sufficient condition is $(\e^{-tT})_{t>0}$ is positive and irreducible, that is
$$\forall f\in\L^2(\R^d),\; f\neq 0,\; f\geqslant 0\; a.e.\; \Rightarrow \forall t>0,\; \e^{-tT}f>0\; a.e.$$
In that case, we can even take the eigenfunction associated to $\lambda_0$ to be positive. The following result, which is a synthesis of Theorems 2.6 and 2.10 of \cite{OuhabazHeatEquation}, gives a characterization of positivity and irreducibility through the quadratic form associated to $T$.

\begin{prop}\label{Prop:PosIrredSemiGroup}
    Let $\mathcal{H}$ be a Hilbert space continuously embedded in $\L^2(\R^d)$. Let $q$ be a continuous, symmetric and coercive bilinear map on $\mathcal{H}$ and $(T,\mathrm{D})$ be the associated self-adjoint operator given by Riesz theorem. Then, $(\e^{-tT})_{t>0}$ is positive and irreductible if and only if there exists dense subsets $\mathcal{H}_0$ and $\mathcal{H}_1$ of $\mathcal{H}$ such that
    \begin{enumerate}
        \item $\forall u\in \mathcal{H}_0,\; (\re u)_+\in \mathcal{H},\; q(\re u,\im u)\in\R$ and $q((\re u)_+,(\re u)_-)\leqslant 0,$
        \item for any $D\subset \R^d$ such that 
        $$\forall u\in \mathcal{H}_1,\; \indic{D}u\in \mathcal{H} \text{ and } q(\indic{D}u,\indic{D^c}u)\geqslant 0,$$
        it holds $|D|=0$ or $|D^c|=0$.
    \end{enumerate}
    In that case, 1. and 2. hold for $\mathcal{H}_0=\mathcal{H}_1=\mathcal{H}$.
\end{prop}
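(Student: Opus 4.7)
The plan is to derive \cref{Prop:PosIrredSemiGroup} from Theorems 2.6 and 2.10 of \cite{OuhabazHeatEquation}, which characterize respectively the positivity and the irreducibility of $(\e^{-tT})_{t>0}$ through conditions of the form 1 and 2 but stated for all of $\mathcal{H}$. The content of the present proposition beyond those theorems is that it is enough to verify these conditions on dense subsets $\mathcal{H}_0,\mathcal{H}_1$ of $\mathcal{H}$.

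The ``only if'' direction, together with the final ``In that case'' sentence, is an immediate application: if $(\e^{-tT})_{t>0}$ is positive and irreducible, Theorems 2.6 and 2.10 of \cite{OuhabazHeatEquation} assert that conditions 1 and 2 hold with $\mathcal{H}_0=\mathcal{H}_1=\mathcal{H}$, which in particular supplies dense subsets with the required properties.

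For the ``if'' direction, I would upgrade each condition from the dense subset to all of $\mathcal{H}$ and then invoke the Beurling--Deny and irreducibility criteria. Fix $u\in\mathcal{H}$ and take $u_n\in\mathcal{H}_0$ with $u_n\to u$ in $\mathcal{H}$. Since by hypothesis $q((\re u_n)_+,(\re u_n)_-)\le 0$, the bilinear expansion $q(\re u_n,\re u_n)=q((\re u_n)_+,(\re u_n)_+)-2q((\re u_n)_+,(\re u_n)_-)+q((\re u_n)_-,(\re u_n)_-)$ and coercivity of $q$ on $(\re u_n)_-$ yield the a priori bound $q((\re u_n)_+,(\re u_n)_+)\le q(\re u_n,\re u_n)$, hence, using continuity of $q$ and coercivity, a uniform $\mathcal{H}$-bound on $(\re u_n)_+$. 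A weakly convergent subsequence in $\mathcal{H}$ then exists; its limit is identified with $(\re u)_+$ via the continuous embedding $\mathcal{H}\hookrightarrow \L^2$ and the $\L^2$-continuity of the positive part. This proves $(\re u)_+\in\mathcal{H}$; the inequality $q((\re u)_+,(\re u)_-)\le 0$ and the realness of $q(\re u,\im u)$ then pass to the limit by bilinearity, continuity and weak lower semicontinuity of $q$. An analogous approximation from $\mathcal{H}_1$ upgrades condition 2 to all of $\mathcal{H}$: if a set $D$ satisfies the stability condition on $\mathcal{H}_1$, then, coercivity providing a uniform $\mathcal{H}$-bound on $\indic{D}u_n$, the same weak compactness argument extends it to $\mathcal{H}$. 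With both conditions verified on all of $\mathcal{H}$, Theorems 2.6 and 2.10 of \cite{OuhabazHeatEquation} yield positivity and irreducibility of $(\e^{-tT})_{t>0}$.

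The main technical obstacle is precisely this density upgrade, specifically the identification of the weak $\mathcal{H}$-limit of $(\re u_n)_+$ (respectively $\indic{D}u_n$) with $(\re u)_+$ (respectively $\indic{D}u$). Coercivity of $q$ is exactly what turns the dense-subset hypothesis into a full $\mathcal{H}$-statement, and every subsequent passage to the limit is standard in Dirichlet form theory once weak lower semicontinuity of $q$ is available.
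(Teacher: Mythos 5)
The paper offers no proof of \cref{Prop:PosIrredSemiGroup} beyond attributing it to Theorems 2.6 and 2.10 of \cite{OuhabazHeatEquation}, so your reconstruction follows the only available route and is essentially what the paper intends; your density upgrade for condition 1 (coercivity plus $q((\re u_n)_+,(\re u_n)_-)\leqslant 0$ giving a uniform bound on $(\re u_n)_+$, weak compactness, identification of the limit in $\L^2$, and passage to the limit of the inequality via weak lower semicontinuity applied to $q(|\re u_n|,|\re u_n|)\leqslant q(\re u_n,\re u_n)$) is correct and is the genuinely nontrivial part. One small remark: for condition 2 no upgrade is needed at all, since the property ``$\forall u\in\mathcal{H}_1,\ \indic{D}u\in\mathcal H$ and $q(\indic{D}u,\indic{D^c}u)\geqslant 0$'' is implied by the corresponding property over all of $\mathcal{H}$, so requiring triviality of every $D$ satisfying it on the dense subset $\mathcal{H}_1$ is already formally \emph{stronger} than Ouhabaz's full-domain criterion; your weak-compactness argument there is harmless but superfluous, and this monotonicity is precisely what the paper exploits when it verifies condition 2 only on $\rho\mathcal{C}^\infty_c(\R^2)$ in the proof of \cref{Prop-SpectralGapA2d}.
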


\begin{proof}[Proof of \cref{Prop-SpectralGapA2d}.]

    The quadratic form associated to $-A+W$ is 
    $$\forall u_1,u_2 \in \D^{1,2},\; q(u_1,u_2) = a(u_1,u_2)+\re\int Wu_1\overline{u_2}\d x.$$
    Let $u\in\D^{1,2}$, the diamagnetic inequality implies $(\re u)_+\in\D^{1,2}$ Moreover, one easily check it verifies $q((\re u)_+,(\re u)_-)=0$, $q(\re u, \im u)\in\R$ and $q(u_1,u_2)=0$ if $\supp(u_1)\cap\supp(u_2)=\emptyset$. Hence, according to \cref{Prop:PosIrredSemiGroup}, it is sufficient to show that, if $D\subset\R^2$ verify
    $$\forall u\in \rho\mathcal{C}^\infty_c(\R^2)\subset \D^{1,2},\; \indic{D}u\in\D^{1,2},$$
    then $|D|=0$ or $|D^c|=0$. Let such a $D$, then for all $u=\rho v\in\rho\mathcal{C}^\infty_c(\R^2)$, it holds $\indic{D}u \in\D^{1,2}$ thus $\indic{D}v\in\H^1(\R^2)$. We conclude using the case of the laplacian, as one can easily check $-\Delta+1$ has a positive and irreducible semigroup from the formula
    $$\forall t>0,\; \forall u\in\L^2(\R^2),\; \e^{t(\Delta-1)}u(x) = \e^{-t}\int_{\R^2}\e^{-\frac{|x-y|^2}{4t}}u(y)\frac{\d y}{4\pi t}\; a.e.$$
    The conclusion then follows by a standard Krein-Rutman argument. 
    
\end{proof}

\section{A priori properties of standing waves}\label{Sec-APriori}
\sectionmark{A priori properties}

In this section, we explore a priori results on solutions of \cref{Eq-StationaryAGP} in $\D^{1,2}=\rho\W^{1,2}$. Recall that, in dimension 1, $\D^{1,2}=\W^{1,2}$ and that in dimension 1 and 2, the Riesz theorem implies $-A+\delta:\D^{1,2}\to\D^{-1,2}=(\D^{1,2})'$ is invertible for $\delta$ given by \cref{Prop-Quasi-coercivity-a-1d,Prop-Quasi-coercivity-a-2d}. In both dimensions, $\D^{1,2}$ is the energy domain of $A$. In order to explore the structure of solutions of \cref{Eq-StationaryAGP}, we introduce the following sets
$$\S_{\lambda,\gamma} = \left\{(\omega,u)\in\R\times\D^{1,2},\; -Au-\lambda|u|^{2\gamma}u+\omega u = 0 \text{ and } u\neq 0\right\}$$
and $\overline{\S}_{\lambda,\gamma}$ its closure in $\R\times\D^{1,2}$. Using the continuous embedding of $\D^{1,2}$
in $\L^q(\R^d)$ for any $q\in[2,+\infty)$, it is easy to show
$$\overline{\S}_{\lambda,\gamma} = \S_{\lambda,\gamma} \cup (\R\times\{0\})\subset \R\times\mathrm{D}(A).$$
The set $\R\times\{0\}$ is called the trivial line of solutions of \cref{Eq-StationaryAGP}. Taking into account the exponential transform, one can show 
$$\S_{\lambda,\gamma} = \left\{(\omega,\rho v)\in\R\times\D^{1,2},\; -\Tilde{A}v-\lambda\rho^{2\gamma}|v|^{2\gamma}v+\omega v = 0 \text{ and } v\neq 0\right\},$$
where $$\Tilde{A}v=\rho^{-1}A(\rho v)=Hv+2\nabla Y\cdot\nabla v+ xY\cdot xv + Zv,$$
with $Z=|Y'|^2$ in dimension 1 and $Z=\wick{|\nabla Y|^2}$ in dimension 2. Finally, we define 
$$\Hat{\S}_{\lambda,\gamma} = \left\{u\in\D^{1,2},\; \exists \omega\in\R,\; (\omega,u)\in\S_{\lambda,\gamma}\right\}.$$
As $\Hat{\S}_{\lambda,\gamma}\subset \mathrm{D}(A)$, \cref{Cor-MaximalRegDomain1d,Cor-MaximalRegDomain2d} apply to solutions of \cref{Eq-StationaryAGP}. To prove \cref{Th-MaxRegSW}, it is thus sufficient to prove the following lemma.

\begin{lemme}\label[lemme]{Lem-RegAprioriSW}
    Let $\lambda\in\R$ and $\gamma>0$. A.s. it holds
    $$ \Hat{\S}_{\lambda,\gamma}\subset\bigcap_{q\in[2,+\infty)}\rho\W^{3-\frac{d}{2}-,q}\subset\bigcap_{q\in[2,+\infty)}\W^{2-\frac{d}{2}-,q}.$$
    In particular, a.s.\ $\Hat{\S}_{\lambda,\gamma}\subset\rho\mathcal{C}^{3-\frac{d}{2}-}(\R^d)\subset\mathcal{C}^{2-\frac{d}{2}-}(\R^d).$
\end{lemme}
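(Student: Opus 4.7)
The strategy is a bootstrap on the equation satisfied by $v := \rho^{-1}u$, namely
$$Hv = -2\nabla Y\cdot\nabla v - xY\cdot xv - Zv + \lambda\rho^{2\gamma}|v|^{2\gamma}v - \omega v,$$
which follows from \cref{Eq-FormulaA1d} in dimension $1$ and \cref{Eq-FormulaH+Xi} in dimension $2$. The input $u\in\Hat{\S}_{\lambda,\gamma}\subset\mathrm{D}(A)$ provides the base regularity $v\in\W^{2,2}$ in dimension $1$ (via \cref{Prop-LocDomain1d}) and $v\in\W^{2-,2}$ in dimension $2$ (via \cref{Prop-CaracDs2}). In either case the Sobolev embeddings of \cref{Sobolev-embeddings} already give $v\in\L^q$ for every $q\in[2,+\infty)$ and $v\in\L^\infty$, so that the nonlinear term $\rho^{2\gamma}|v|^{2\gamma}v$ is as regular as $v$ itself by a Kato--Ponce estimate (\cref{Prop-Kato-Ponce}, \cref{Lem-ProductRule}), using that $\rho=\e^Y$ is smooth enough in the Hermite scale thanks to \cref{Cor-RegWN}.

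The bootstrap alternates two moves: (a) convert $\L^2$-based regularity of $v$ into $\L^q$-based regularity with a controlled loss of derivatives by \cref{Sobolev-embeddings}; (b) apply $H^{-1}$, which gains two derivatives in the $\W^{s,q}$ scale by \cref{Prop-action_d/dx} and spectral theory, to the equation above to promote $v$ into a smaller space. The rough terms are controlled using the regularities recalled in \cref{Cor-RegWN}: one has $Y\in\W^{2-\frac{d}{2}-,q}$ and $\nabla Y,xY\in\W^{1-\frac{d}{2}-,q}$ for every $q<+\infty$, while $Z\in\L^\infty$ in dimension $1$ and $Z\in\W^{0-,q}$ in dimension $2$. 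The products of a negative-regularity noise factor against a positive-regularity function are handled by \cref{Cor-ProductRuleNegPosReg}, while products of two positive-regularity factors are handled by \cref{Lem-ProductRule}.

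Iterating a finite number of such steps, one reaches $v\in\W^{3-\frac{d}{2}-,q}$ for every $q\in[2,+\infty)$, and then $u=\rho v$ belongs to $\W^{2-\frac{d}{2}-,q}$ by \cref{Lem-ProductRule} applied to $\rho\in\W^{2-\frac{d}{2}-,q}$. The ceiling is dictated by the least regular noise object: in dimension $2$ the Wick square $Z$ only lies in $\W^{0-,q}$, so $Zv$ cannot be better than $\W^{0-,q}$ even when $v$ is very smooth, and inversion of $H$ caps $v$ at $\W^{2-,q}$; in dimension $1$ the analogous bottleneck is $Y' v'$ together with $x^2 Y v$, which caps $v$ at $\W^{\frac{5}{2}-,q}$. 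The Hölder statement $\Hat{\S}_{\lambda,\gamma}\subset\rho\mathcal{C}^{3-\frac{d}{2}-}(\R^d)$ then follows from the embedding $\W^{s,q}\subset\mathcal{C}^\sigma$ for $s-\sigma>\frac{2}{q}$ and $q$ large.

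The main obstacle is bookkeeping the products of noise factors of negative Hermite--Sobolev regularity against $v$, $\nabla v$, $xv$: at each stage one must ensure the companion factor carries enough positive regularity \emph{and} enough integrability for \cref{Cor-ProductRuleNegPosReg} to apply with parameters that strictly upgrade the current knowledge on $v$. This is why the bootstrap must first improve the integrability of $v$ (leveraging $d\leq 2$ in \cref{Sobolev-embeddings}) before attempting to improve its smoothness, and why only finitely many iterations suffice to reach the $3-\frac{d}{2}-$ threshold rather than an unbounded gain.
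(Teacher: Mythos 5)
Your proposal follows the same route as the paper: pass to $v=\rho^{-1}u$, use the elliptic equation $-Hv = 2\nabla Y\cdot\nabla v + xY\cdot xv + Zv + \lambda\rho^{2\gamma}|v|^{2\gamma}v - \omega v$, estimate the right-hand side with \cref{Lem-ProductRule,Cor-ProductRuleNegPosReg,Prop-action_d/dx,Cor-action_V} and the a.s.\ regularities of the noise objects, and invert $H$ to gain two derivatives; the paper closes this in a single step starting from $v\in\W^{2-\frac{d}{2}-,q}$, working on the event $\{Y\in \H^{2-\frac{d}{2}-,\infty},\ \nabla Y, xY, Z\in \H^{1-\frac{d}{2}-,\infty}\}$. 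One point to fix in your bookkeeping: in dimension $1$ you record only $Z\in\L^\infty$, which would let $Zv$ contribute merely $\L^q$ to $Hv$ and cap $v$ at $\W^{2,q}$, short of the target $\W^{\frac{5}{2}-,q}$; you need $Z=(Y')^2\in\H^{\frac{1}{2}-,\infty}$, which follows from $Y'\in\H^{\frac{1}{2}-,\infty}$ and \cref{Prop-Kato-Ponce}, exactly as you already treat the bottleneck terms $Y'v'$ and $x^2Yv$.
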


\begin{proof}

    Let $u\in\Hat{\S}_{\lambda,\gamma}$, there exists $\omega\in\R$ such that $(\omega,u)\in \S_{\lambda,\gamma}$. Write $u=\rho v$ with $v\in\W^{2-,2}$ according to \cref{Prop-LocDomain1d,Prop-CaracDs2}. Then, one has
    $$-Hv=2\nabla Y\cdot \nabla v + xY\cdot xv + Zv + \lambda|v|^{2\gamma}v\rho^{2\gamma}-\omega v,$$
    where $Z=|Y'|^2$ in dimension 1 and $Z=\wick{|\nabla Y|^2}$ in dimension 2. We work on the event $$\{Y\in \H^{2-\frac{d}{2}-,\infty},\; \nabla Y,xY, Z\in \H^{1-\frac{d}{2}-,\infty}\}$$ which is of full probability. As $u\in \mathrm{D}(A)\subset \rho \W^{2-\frac{d}{2}-,q}$ for any $q\in[2,+\infty)$ by Sobolev embeddings, \cref{Lem-ProductRule,Cor-ProductRuleNegPosReg,Prop-action_d/dx,Cor-action_V} imply $v\in \W^{3-\frac{d}{2}-,q}$ for any $q\in[2,+\infty)$. Hölder regularity follows from usual Sobolev embeddings.
    
\end{proof}

Using elementary computations, we can prove a priori conditions on $\omega$ for the existence of non-negative standing waves.

\begin{prop}\label[prop]{Prop-APrioriConditionExistence}
    Let $\omega\in \R$. There are three cases : 
    \begin{itemize}
        \item If $\lambda=0$, there exists $u\geqslant0$ with $(\omega,u)\in\S_{0,\gamma}$ if and only if $\omega=-\mu_0$ and in that case, $u=\alpha\varphi_0$ ($\alpha>0$).
        \item If $\lambda>0$ and there exists $u\geqslant0$ with $(\omega,u)\in\S_{\lambda,\gamma}$ then $\omega>-\mu_0$. If $\omega>-\mu_0$ and there exists $u\geqslant0$ with $(\omega,u)\in\S_{\lambda,\gamma}$ then $\lambda>0$.
        \item If $\lambda<0$ and there exists $u\geqslant0$ with $(\omega,u)\in\S_{\lambda,\gamma}$ then $\omega<-\mu_0$. If $\omega<-\mu_0$ and there exists $u\geqslant0$ with $(\omega,u)\in\S_{\lambda,\gamma}$ then $\lambda<0$.
    \end{itemize}  
\end{prop}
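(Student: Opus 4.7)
The plan is to test the stationary equation $-Au - \lambda |u|^{2\gamma}u + \omega u = 0$ against the ground state eigenfunction $\varphi_0 > 0$ of $-A$, whose existence with strict positivity is granted by \cref{Prop-SpectralGapA1d,Prop-SpectralGapA2d}. For any $(\omega,u) \in \S_{\lambda,\gamma}$ with $u \geqslant 0$, one has $u \in \mathrm{D}(A)$ as recalled at the beginning of \cref{Sec-APriori}, and $\varphi_0 \in \mathrm{D}(A)$. Using the symmetry of the bilinear form $a$ on $\D^{1,2}$ together with $-A\varphi_0 = \mu_0 \varphi_0$,
\begin{equation*}
\langle -Au, \varphi_0 \rangle_{\D^{-1,2},\D^{1,2}} = a(u,\varphi_0) = a(\varphi_0,u) = \mu_0 \int_{\R^d} u \varphi_0 \, \d x.
\end{equation*}
Pairing the equation with $\varphi_0$ then yields the key identity
\begin{equation*}
(\omega + \mu_0) \int_{\R^d} u \varphi_0 \, \d x = \lambda \int_{\R^d} u^{2\gamma+1} \varphi_0 \, \d x,
\end{equation*}
all integrals being absolutely convergent thanks to the Sobolev embedding $\D^{1,2} \hookrightarrow \L^q$ for $q \in [2,+\infty)$ and the boundedness of $\varphi_0$ (\cref{Lem-RegAprioriSW}). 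Both integrals are in fact strictly positive, since $u \geqslant 0$ is non-trivial and $\varphi_0 > 0$ almost everywhere.

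The three cases of the statement now follow by simple sign inspection. If $\lambda > 0$, the right-hand side is strictly positive, forcing $\omega + \mu_0 > 0$, i.e.\ $\omega > -\mu_0$. Conversely, if $\omega > -\mu_0$ and a non-trivial non-negative $u$ exists, the left-hand side is strictly positive, forcing $\lambda > 0$. The case $\lambda < 0$ (and symmetrically $\omega < -\mu_0$) is handled identically, producing the opposite strict inequality.

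For $\lambda = 0$, the equation reduces to $-Au = -\omega u$, so $u$ is a non-negative eigenfunction of $-A$ associated with the eigenvalue $-\omega$. By simplicity of the spectrum of $-A$ (\cref{Prop-SpectralGapA1d,Prop-SpectralGapA2d}), any eigenfunction not proportional to $\varphi_0$ is $\L^2$-orthogonal to $\varphi_0 > 0$ and must therefore change sign; the only non-negative eigenfunctions are positive multiples of $\varphi_0$. Hence $u = \alpha \varphi_0$ for some $\alpha > 0$, and necessarily $\omega = -\mu_0$. The argument is essentially mechanical once the positive ground state $\varphi_0$ is available; no substantive new input beyond the spectral gap results is required, and the only mildly delicate step is the symmetry identity for $a$ above, which is immediate from its very definition.
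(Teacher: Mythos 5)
Your proof is correct and follows essentially the same route as the paper: pair the stationary equation with the positive ground state $\varphi_0$, read off the signs of the resulting integrals, and invoke the spectral-gap propositions for the case $\lambda=0$. The only (harmless) difference is that you extract all four sign implications from the single identity $(\omega+\mu_0)\int_{\R^d} u\varphi_0\,\d x=\lambda\int_{\R^d} u^{2\gamma+1}\varphi_0\,\d x$, whereas the paper also pairs the equation with $u$ itself to obtain $\omega>-\mu_0\Rightarrow\lambda>0$; your version is marginally more economical.
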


\begin{proof}

    The case $\lambda=0$ comes from \cref{Prop-SpectralGapA1d,Prop-SpectralGapA2d}. Now, assume $\omega>-\mu_0$ and $u\geqslant 0$ with $(\omega,u)\in\S_{\lambda,\gamma}$, taking a duality bracket of \cref{Eq-StationaryAGP} with $u$, it holds
    $$0<\langle (-A+\omega)u,u\rangle = \lambda \int_{\R^d}|u|^{2\gamma+2}\d x,$$
    thus $\lambda>0$. Conversely, if $\lambda>0$ and $u$ as previously, taking a duality bracket of \cref{Eq-StationaryAGP} with $\varphi_0$, it holds
    $$0<\lambda \int_{\R^d}|u|^{2\gamma}u\varphi_0\d x = \langle (-A+\omega)u,\varphi_0\rangle = (\omega+\mu_0)(u,\varphi_0)_{\L^2}.$$
    As $\varphi_0>0$ and $u\geqslant 0$ is non-zero, it holds $\omega>-\mu_0$. Similarly, if $\lambda<0$, 
    $$0>\lambda \int_{\R^d}|u|^{2\gamma}u\varphi_0\d x = \langle (-A+\omega)u,\varphi_0\rangle = (\omega+\mu_0)(u,\varphi_0)_{\L^2},$$
    thus $\omega<-\mu_0$. Finally, if $\omega<-\mu_0$ and $u\geqslant 0$ with $(\omega,u)\in\S_{\lambda,\gamma}$, previous results show $\lambda<0$ as it cannot be non-negative because of $\omega$.
    
\end{proof}

We now prove the a priori strict positivity of solutions.

\begin{prop}\label[prop]{Prop-Apriori>0}
    Let $(\omega,u)\in\S_{\lambda,\gamma}$ with $u\geqslant 0$. Then, $u>0$ and for every $(\omega,\phi)\in\S_{\lambda,\gamma}$ with $|\phi|=u$, there exists $\theta>0$ such that $\phi=\e^{i\theta}u$.
\end{prop}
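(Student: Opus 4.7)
The approach is to linearize \cref{Eq-StationaryAGP} around $u$ and reduce both claims to a Perron--Frobenius-type argument for a single self-adjoint operator. Since $|\phi|^{2\gamma}=u^{2\gamma}$ whenever $|\phi|=u$, both $u$ and any such $\phi$ are annihilated by the same linear operator
\[
L := -A + \omega - \lambda u^{2\gamma}.
\]
Strict positivity of $u$ will come from identifying $u$ with the ground state of $L$, and the phase uniqueness of $\phi$ from the one-dimensionality of $\ker L$.

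First, I would verify that $L$ falls within the framework of \cref{Sec-AndersonOp1d,Sec-AndersonOp2d}. By \cref{Lem-RegAprioriSW}, $u$ is continuous and bounded on $\R^d$, so the perturbation $W := \omega - \lambda u^{2\gamma}$ belongs to $\L^\infty(\R^d,\R)$. \cref{Prop-PerturbationOp1d} in dimension~$1$, and the analogous construction from \cref{Sec-AndersonOp2d} in dimension~$2$, yield that $L$ is self-adjoint on $\mathrm{D}(A)$ with compact resolvent, while \cref{Prop-SpectralGapA1d,Prop-SpectralGapA2d} provide a simple lowest eigenvalue $\mu_0(L)$ with an almost everywhere positive ground state $\phi_0$.

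Since $Lu=0$ and $u\geqslant 0$ is non-trivial, $(u,\phi_0)_{\L^2}>0$, which by self-adjointness of $L$ and orthogonality of distinct eigenspaces forces $u$ to correspond to $\mu_0(L)$. Simplicity then gives $u=c\phi_0$ with $c>0$ and in particular $\mu_0(L)=0$, so $u>0$ almost everywhere. To upgrade this to pointwise positivity I would use the continuity of $u$ from \cref{Lem-RegAprioriSW}: in dimension~$1$, writing $u=\rho v$ with $v\in\mathcal{C}^2$, a zero of $v$ at an interior point would be a minimum, so $v$ and $v'$ would vanish there simultaneously, contradicting ODE uniqueness for the Sturm--Liouville formulation of \cref{Prop-IPP1d}; in dimension~$2$, I would invoke the strict pointwise positivity of the heat kernel of $L$ applied to the fixed-point identity $u=\mathrm{e}^{-tL}u$.

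For the phase uniqueness, let $(\omega,\phi)\in\S_{\lambda,\gamma}$ with $|\phi|=u$. As noted, $L\phi=0$. Writing $\phi=\phi_r+\complexI\phi_i$ with $\phi_r,\phi_i$ real and using that $L$ has real coefficients, both $\phi_r$ and $\phi_i$ solve $L\cdot=0$. Simplicity of the $0$ eigenvalue then gives $\phi_r=au$ and $\phi_i=bu$ for some $a,b\in\R$, so $\phi=(a+\complexI b)u$, and the constraint $|\phi|=u$ on $\{u>0\}$ forces $a^2+b^2=1$, yielding $\phi=\mathrm{e}^{\complexI\theta}u$. The main technical obstacle is the pointwise positivity of $u$ in dimension~$2$: because the equation for $v=u/\rho$ involves the distributional Wick product $Z=\wick{|\nabla Y|^2}$, a classical strong maximum principle does not apply directly, and one has to rely on the semigroup representation or on a probabilistic construction of the associated heat kernel.
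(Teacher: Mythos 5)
Your proposal is correct and follows essentially the same route as the paper: both arguments identify $u$ as a zero-eigenvalue eigenfunction of the self-adjoint operator $-A+W$ with $W=\omega-\lambda u^{2\gamma}\in\L^\infty(\R^d)$ (bounded by \cref{Lem-RegAprioriSW}), invoke \cref{Prop-SpectralGapA1d,Prop-SpectralGapA2d} to place $u$ at the simple lowest eigenvalue, and deduce both the positivity of $u$ and the one-dimensionality of the kernel, from which $\phi=\e^{\complexI\theta}u$ follows. The only divergence is the extra machinery you deploy to upgrade almost-everywhere positivity to pointwise positivity (Sturm--Liouville uniqueness in dimension $1$, heat-kernel positivity in dimension $2$); the paper reads positivity directly off the spectral-gap propositions without this discussion, so your added care is a legitimate refinement rather than a different method.
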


\begin{proof}

    Let $W=\omega-\lambda u^{2\gamma}\in\L^\infty(\R^d)$ according to \cref{Lem-RegAprioriSW}. It holds $$(-A+W)u=0,$$ so $u$ is an eigenfunction of $-A+W$. As $u\geqslant 0$, \cref{Prop-SpectralGapA1d,Prop-SpectralGapA2d} imply it must be associated to the lowest eigenvalue, $u>0$ and every other solution of $(-A+W)\phi=0$ is of the form $\phi=\alpha u$ with $\alpha\in\C$. Thus if $(\omega,\phi)\in\S_{\lambda,\gamma}$ with $|\phi|=u$, there exists $\theta>0$ such that $\phi=\e^{i\theta}u$.
    
\end{proof}

\subsection{Exponential localization}

In this section, we will extend the exponential localization result from \cite{FukuizumiExponentialDecay} to focusing standing waves with white noise potential. This approach is mostly perturbative. Let $\eps\in(0,1)$ and $\phi\in\mathcal{C}^1(\R^d\backslash\{0\},\R)\cap\mathcal{C}(\R^d,(0,+\infty))$ verifying
\begin{equation}\label{Eq-PhiLocExpo}
    \forall x\in\R^d\backslash\{0\},\; |\nabla \phi(x)|^2\leqslant 4(1-\eps)^2 V(x).
\end{equation}

\begin{prop}\label[prop]{Prop-LocalExpoFoca}

    Let $\lambda,\gamma>0$. There exists $c=c(\Xi,\eps,V)>0$, such that for all $(\omega,u)\in\S_{\lambda,\gamma}$ with $u=\rho v$, $\e^\phi\left(|\nabla v|^2+V|v|^2+|v|^{2\gamma+2}\right)\in\L^1(\R^d)$ and 
    \begin{align*}
        &\frac{\eps}{2}\int \e^\phi \left(|\nabla v|^2+|xv|^2\right) \rho^2 \d x + (\omega-c)\int \e^\phi |v|^2\rho^2 \d x\\ 
        &\leqslant \lambda \int \e^\phi |v|^{2\gamma+2}\rho^{2\gamma+2} \d x\\ 
        &\leqslant \left(2-\frac{\eps}{2}\right)\int \e^\phi \left(|\nabla v|^2+|xv|^2\right)\rho^2 \d x + (\omega+c)\int \e^\phi |v|^2\rho^2 \d x.
    \end{align*}
    
\end{prop}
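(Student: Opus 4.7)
The plan is to perform an Agmon-type weighted energy estimate, testing the equation $-Au+\omega u=\lambda|u|^{2\gamma}u$ against $w^2 u$ with $w=\e^{\phi/2}$, and to truncate $\phi$ so that all weighted quantities are a priori finite. More precisely, I replace $\phi$ by a bounded approximation $\phi_n\in\mathcal{C}^1(\R^d\setminus\{0\})$ with $\phi_n\nearrow\phi$ pointwise, still satisfying $|\nabla\phi_n|^2\leqslant 4(1-\eps)^2 V$. Writing $w_n=\e^{\phi_n/2}$, the function $w_n^2 u\in\D^{1,2}$ is an admissible test function, and expanding $a(u,w_n^2 u)$ via \eqref{Eq-QuadraticFormH+xi} together with the equation yields the identity
\[
\int\bigl[w_n^2|\nabla v|^2+w_n\nabla w_n\cdot\nabla|v|^2+V(1-Y)w_n^2|v|^2\bigr]\rho^2\,\d x-\langle Z,w_n^2|v|^2\rho^2\rangle=\lambda\int w_n^2|v|^{2\gamma+2}\rho^{2\gamma+2}\,\d x-\omega\int w_n^2|v|^2\rho^2\,\d x.
\]

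Young's inequality with parameter $\alpha>0$, combined with the pointwise bound $|\nabla w_n|^2\leqslant(1-\eps)^2 V w_n^2$, controls the cross term by $\alpha\int w_n^2|\nabla v|^2\rho^2\,\d x+\alpha^{-1}(1-\eps)^2\int V w_n^2|v|^2\rho^2\,\d x$. Choosing $\alpha=1-\eps/2$ produces exactly the coefficients $\eps/2$ (lower) and $2-\eps/2$ (upper) in front of $\int w_n^2|\nabla v|^2\rho^2\,\d x$. For the $V(1-Y)$ term I exploit that $Y\in\W^{s,\infty}$ for some $s>0$ by \cref{Cor-RegWN}, which encodes the decay $|Y(x)|\lesssim\langle x\rangle^{-s}$ almost surely, so for any $\delta>0$ there is $R=R(\delta,\Xi)$ with $|Y|\leqslant\delta$ on $\{|x|>R\}$. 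Splitting the integral at $|x|=R$ gives $\bigl|\int YVw_n^2|v|^2\rho^2\,\d x\bigr|\leqslant\delta\int Vw_n^2|v|^2\rho^2\,\d x+R^2|Y|_\infty\int w_n^2|v|^2\rho^2\,\d x$, the last piece being absorbed into $c\int w_n^2|v|^2\rho^2\,\d x$. Picking $\delta$ small enough then leaves the coefficient $\eps/2$ (resp.\ $2-\eps/2$) in front of $\int Vw_n^2|v|^2\rho^2\,\d x$ in the two directions.

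The distributional pairing $\langle Z,w_n^2|v|^2\rho^2\rangle$ is bounded by duality, the product rule (\cref{Cor-ProductRuleNegPosReg}) and interpolation between $\D^{1,2}$ and $\L^2$, yielding $|\langle Z,w_n^2|v|^2\rho^2\rangle|\leqslant\eta\bigl(\int w_n^2|\nabla v|^2\rho^2\,\d x+\int Vw_n^2|v|^2\rho^2\,\d x\bigr)+C_{\eta,\Xi}\int w_n^2|v|^2\rho^2\,\d x$ for arbitrary $\eta>0$, which is absorbed into the good terms by taking $\eta$ smaller than a fraction of $\eps$. Once the bounds are established for each $\phi_n$, monotone convergence passes them to $\phi$ itself, since $w_n^2\nearrow\e^\phi$ and all relevant integrands are non-negative. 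The claimed integrability of $\e^\phi(|\nabla v|^2+V|v|^2+|v|^{2\gamma+2})$ then follows from the uniform $n$-bounds combined with \cref{Lem-RegAprioriSW}, which yields $|v|^{2\gamma}\rho^{2\gamma}\in\L^\infty$ and therefore $\lambda\int w_n^2|v|^{2\gamma+2}\rho^{2\gamma+2}\,\d x\leqslant C\int w_n^2|v|^2\rho^2\,\d x$ with $C$ independent of $n$; coupling this with the lower bound closes the bootstrap.

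The main obstacle is the uniform-in-$n$ control of the $Z$-pairing in the weighted setting, since one needs $w_n^2|v|^2\rho^2$ in a space of positive Hermite-Sobolev regularity with norms bounded independently of $n$. The pointwise constraint $|\nabla\phi_n|^2\leqslant 4(1-\eps)^2 V$ translates into uniform Hermite-Sobolev bounds on $w_n$ through \cref{Prop-action_d/dx} and \cref{Cor-action_V}; a secondary difficulty is that $w_n$ is only $\mathcal{C}^1$, forcing the use of fractional Sobolev exponents in the product rule.
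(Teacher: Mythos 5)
Your proposal follows essentially the same route as the paper: a bounded truncation of the weight (the paper uses $\zeta_m=\exp(m\phi/(m+\phi))$ where you use $\phi_n\nearrow\phi$), testing the quadratic form against the weighted solution, Young's inequality driven by $|\nabla\phi|^2\leqslant 4(1-\eps)^2V$, absorption of the $VY$ and $Z$ terms as lower-order perturbations with constants uniform in the truncation parameter, and monotone convergence plus the $\L^\infty$ control of $|v|^{2\gamma}\rho^{2\gamma}$ from \cref{Lem-RegAprioriSW} to close. The only point to fix is bookkeeping: choosing the Young parameter $\alpha=1-\eps/2$ already spends the entire margin down to $\eps/2$, leaving nothing with which to absorb the $\delta\int Vw_n^2|v|^2\rho^2$ and $\eta$-terms afterwards, so you should take $\alpha$ between $1-\eps$ and $1-\eps/2$ (the paper keeps coefficients $\eps$ and $2-\eps$ after Young and reserves the remaining $\eps/2$ for the noise).
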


\begin{proof}
    We do the proof for dimension 2 as the dimension 1 is easier. As in the proof of Theorem 2 in \cite{FukuizumiExponentialDecay}, for $m\in\N$, let
    $$\zeta_m = \exp(m\phi/(m+\phi)).$$
    We show there exists $c>0$ such that
     \begin{align*}
        &\frac{\eps}{2}\int \zeta_m \left(|\nabla v|^2+|xv|^2\right) \rho^2 \d x + (\omega-c)\int \zeta_m |v|^2 \rho^2\d x\\ 
        &\leqslant \lambda \int \zeta_m |v|^{2\gamma+2} \rho^{2\gamma+2}\d x\\ 
        &\leqslant \left(2-\frac{\eps}{2}\right)\int \zeta_m \left(|\nabla v|^2+|xv|^2\right)\rho^2 \d x + (\omega+c)\int \zeta_m |v|^2\rho^2 \d x.
    \end{align*}
    The end of the proof follows exactly the one of Theorem 2 in \cite{FukuizumiExponentialDecay}, by sending $m$ to $+\infty$,
    as $V$ verifies hypothesis (V3) of \cite{FukuizumiExponentialDecay} and $u$ tends to 0 at infinity.\\

    Taking a duality bracket of \cref{Eq-StationaryAGP} with $\zeta_m u = \zeta_m \rho v$ and using 
    $$A(\rho v)=\rho\left(Hv+2\nabla Y\cdot \nabla v + xY\cdot xv + Zv\right),$$
    it holds
    \begin{equation}
        \begin{split}
            &\re\int_{\R^d}\rho^2 \nabla v\cdot\overline{\nabla[\zeta_m v]}\d x + \int_{\R^d} \zeta_m (|x|^2+\omega)|v|^2 \rho^2 \d x\\
            &= \lambda \int_{\R^d} \zeta_m |v|^{2\gamma+2} \rho^{2\gamma+2}\d x + \langle |x|^2Y+Z, \zeta_m|v|^2\rho^2\rangle.
        \end{split}
    \end{equation}
    Now, remark
    $$\re\int_{\R^d}\rho^2 \nabla v\cdot\overline{\nabla[\zeta_m v]}\d x = \int_{\R^d}\zeta_m|\nabla v|^2\rho^2\d x + \re\int_{\R^d}\left(\frac{m}{m+\phi}\right)^2\zeta_m v \nabla\phi\cdot\overline{\nabla v}\d x.$$
    Thus, using Young inequality and \cref{Eq-PhiLocExpo}, it holds
    \begin{align*}
        &\eps\int \zeta_m \left(|\nabla v|^2+|xv|^2\right) \rho^2 \d x + \omega\int \zeta_m |u|^2 \d x\\ 
        &\leqslant \lambda \int \zeta_m |u|^{2\gamma+2} \d x + \langle VY+Z, \zeta_m|u|^2\rangle\\ 
        &\leqslant \left(2-\eps\right)\int \zeta_m \left(|\nabla v|^2+|xv|^2\right)\rho^2 \d x + \omega\int \zeta_m |u|^2 \d x.
    \end{align*}
    To conclude, it is sufficient to treat $\langle VY+Z, \zeta_m|u|^2\rangle$ as a lower order perturbation. Recall $xY,Z\in\W^{0-,\infty}$, thus by duality and \cref{Lem-ProductRule,Cor-ProductRuleNegPosReg}, it holds for any $\kappa\in(0,\frac{3}{4})$, 
    $$|\langle |x|^2Y+Z, \zeta_m|v|^2\rho^2\rangle| \lesssim |xY|_{\W^{-\kappa,\infty}}|\rho|_{\H^{\kappa,\infty}}^2\left|\sqrt{|x|}\zeta_m^{\frac{1}{2}}v\right|_{\W^{\kappa,2}}^2+|Z|_{\W^{-\kappa,\infty}}|\rho|_{\H^{\kappa,\infty}}^2\left|\zeta_m^{\frac{1}{2}}v\right|_{\W^{\kappa,2}}^2.$$
    Thus, by \cref{Cor-action_V}, interpolation and Young inequality, for any $\eta>0$, there exists $k=k(\eta,\Xi)>0$ such that,
    $$|\langle |x|^2Y+Z, \zeta_m|v|^2\rho^2\rangle| \leqslant \eta \left|\zeta_m^{1/2}v\right|_{\W^{1,2}}^2 + k\int_\R \zeta_m |v|^2 \rho^2\d x.$$

    Moreover,
    \begin{align*}
        \left|\zeta_m^{1/2}v\right|_{\W^{1,2}}^2 &\leqslant 2 \int \zeta_m|\nabla v|^2\d x + \frac{1}{2}\int \zeta_m\frac{m^2|\nabla\phi|^2}{(m+\phi)^2}|v|^2\d x + \int \zeta_m |xv|^2\d x \\
        &\leqslant 2 \int \zeta_m|\nabla v|^2\d x + \left(2(1-\eps)^2+1\right) \int \zeta_m |xv|^2\d x \\
        &\leqslant 3\left|\rho^{-2}\right|_{\L^\infty} \int \zeta_m \left(|\nabla v|^2+|xv|^2\right) \rho^2 \d x.
    \end{align*}
    Choosing $\eta = \frac{\eps}{6\left|\rho^{-2}\right|_{\L^\infty}}$ allows to conclude.
    
\end{proof}

As a corollary, we obtain pointwise exponential localization of $u$ and $\nabla v$. 

\begin{cor}\label[cor]{Cor-PointWiseExp}
    Let $\lambda,\gamma>0$. Then, for all $u=\rho v\in\Hat{\S}_{\lambda,\gamma}$, it holds
    $$\exists C,c>0,\; \forall x\in\R^d,\; |u(x)|+|\nabla v(x)|\leqslant C\e^{-c|x|^2}.$$
\end{cor}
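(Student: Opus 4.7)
The plan is to combine the integrated Gaussian $\L^2$-bound of \cref{Prop-LocalExpoFoca} with the uniform Hölder regularity of $v$ and $\nabla v$ furnished by \cref{Lem-RegAprioriSW}, via a classical mean-value interpolation on small balls.

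First, I would apply \cref{Prop-LocalExpoFoca} to the admissible profile $\phi(x) = 1 + c_0|x|^2$ with $c_0 \in (0,1)$ and $\eps = 1 - c_0$; this $\phi$ lies in $\mathcal{C}^1(\R^d\backslash\{0\},\R) \cap \mathcal{C}(\R^d,(0,+\infty))$ and satisfies $|\nabla\phi|^2 = 4c_0^2|x|^2 \leq 4(1-\eps)^2 V$. The proposition yields
$$\int_{\R^d} e^{c_0|x|^2}\bigl(|\nabla v|^2 + |x|^2|v|^2 + |v|^{2\gamma+2}\bigr)\,\d x < \infty,$$
from which a split of $\R^d$ into $\{|x|\leq 1\}$ and $\{|x|>1\}$ also gives $\int e^{c_0|x|^2}(|v|^2+|\nabla v|^2)\,\d x<\infty$. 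In particular, for any $x_0$ with $|x_0|\geq 2$, $|x|^2\geq|x_0|^2/2$ on $B(x_0,1)$, so
$$\int_{B(x_0,1)}(|v|^2+|\nabla v|^2)\,\d x \leq C\,e^{-c_0|x_0|^2/2}.$$

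Second, \cref{Lem-RegAprioriSW} ensures $v\in\W^{3-\frac{d}{2}-,q}$ for every $q\in[2,+\infty)$, so by the global Sobolev embedding $\H^{s,q}\hookrightarrow\mathcal{C}^{s-d/q}$ both $v$ and $\nabla v$ are almost surely Hölder continuous on $\R^d$ with some exponent $\alpha\in(0,1)$ and uniform global Hölder seminorm $K$.

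Finally, I would upgrade the ball-averaged decay to a pointwise one via the standard interpolation: for any $f\in\mathcal{C}^\alpha(\R^d)$ with seminorm $K$ and $r\in(0,1]$, expanding $|f(x_0)|\leq |f(y)|+Kr^\alpha$ for $y\in B(x_0,r)$ and integrating gives
$$|f(x_0)|^2 \leq \frac{C_d}{r^d}\int_{B(x_0,r)}|f|^2\,\d x + CK^2 r^{2\alpha}.$$
Inserting the previous ball estimate and choosing $r = \exp(-c_0|x_0|^2/(2(d+2\alpha)))\in(0,1]$ balances the two terms and yields $|f(x_0)|\leq C\,e^{-c|x_0|^2}$ with $c=c_0\alpha/(2(d+2\alpha))>0$. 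Applying this with $f=v$ and $f=\nabla v$ produces the sought pointwise Gaussian bound; the bound on $u$ follows from $u=\rho v$ with $\rho\in\L^\infty$, and the regime $|x_0|\leq 2$ is absorbed by the uniform $\L^\infty$ bound on $v,\nabla v$. The only delicate point is the global Hölder regularity, which is precisely what \cref{Lem-RegAprioriSW} provides; the rest is an elementary balance.
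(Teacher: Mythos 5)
Your argument is correct and matches the paper's proof in all essentials: both rest on the weighted $\L^2$ bound of \cref{Prop-LocalExpoFoca} for a quadratic weight $\phi$ together with the global Hölder regularity of $v$ and $\nabla v$ supplied by \cref{Lem-RegAprioriSW}, upgraded to a pointwise Gaussian bound by averaging over small balls (the paper balances with a ball whose radius depends on $|v(x)|$, you with a radius depending on $|x_0|$ — the same interpolation in different clothes). The only slip is cosmetic: on $B(x_0,1)$ with $|x_0|\geqslant 2$ one has $|x|^2\geqslant |x_0|^2/4$ rather than $|x_0|^2/2$, which merely changes the value of the constant $c$.
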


\begin{proof}

    Let $u\in\Hat{\S}_{\lambda,\gamma}$. As $u=\rho v$ with $\rho\in\L^\infty(\R^d)$, it is sufficient to show
    $$\exists C,c>0,\; \forall x\in\R^d,\; |v(x)|+|\nabla v(x)|\leqslant C\e^{-c|x|^{2}}.$$
    Remark
    $$\phi(x)=2(1-\eps)^2|x|^2$$
    verifies (\ref{Eq-PhiLocExpo}). Let $\alpha\in(0,1)$, as a consequence of \cref{Lem-RegAprioriSW}, there exists $L>0$ such that
    $$\forall x,y\in\R^d,\; |v(x)-v(y)|+|\nabla v(x)-\nabla v(y)|\leqslant L|x-y|^\alpha.$$
    We only prove exponential decay of $v$, as the exact same proof applies to $\nabla v$. Let $x\in\R^d$ and set $B(x) = \left\{y\in\R^d,\; |x-y|^\alpha\leqslant\frac{|v(x)|}{2L}\right\}$. Then, for any $y\in B(x)$, it holds
    $$|v(x)|\leqslant 2|v(y)|.$$
    In particular, integrating over $B(x)$, it holds
    \begin{equation}\label{Eq-IntB(x)ExpLoc}
        C_d \left(\frac{|v(x)|}{2L}\right)^{\frac{d}{\alpha}}|v(x)|^2\leqslant 4\int_{B(x)}|v(y)|^2\d y,
    \end{equation}
    where $C_d$ is the measure of the unit ball in $\R^d$. By \cref{Lem-RegAprioriSW}, $v\in\L^\infty(\R^d)$, thus $B(x)\subset B(x,R)$ with $R^\alpha=\frac{|v|_{\L^\infty}}{2L}$.
    Now, remark there exists $c>0$ such that
    $$\forall x,y\in\R^d,\; |\phi(x)-\phi(y)|\leqslant c\phi(x-y),$$
    thus there exists $C>0$ such that for any $x\in\R$ and any $y\in B(x)$, 
    \begin{equation}\label{Eq-ControlexpPhiExpLoc}
        \e^{\phi(x)}\leqslant \e^{\phi(y)}\e^{c\phi(x-y)}\leqslant C \e^{\phi(y)}.
    \end{equation}
    Multiplying \cref{Eq-IntB(x)ExpLoc} by $\e^{\phi(x)}$, \cref{Eq-PhiLocExpo,Eq-ControlexpPhiExpLoc,Prop-LocalExpoFoca} imply
    $$|v(x)|^{2+\frac{d}{\alpha}}\leqslant \frac{4C(2L)^{\frac{d}{\alpha}}}{C_d}\int_{\R^d}\e^{\phi(y)}|v(y)|^2\d y \e^{-\phi(x)}\lesssim \e^{-\phi(x)}.$$
    As $\phi(x)=2(1-\eps)^2|x|^2$, there exists $c'>0$ such that
    $$|v(x)|\lesssim\e^{-\frac{\phi(x)}{2+\frac{d}{\alpha}}}\lesssim \e^{-\frac{2\alpha(1-\eps)^2}{2\alpha+d}|x|^{2}},$$
    which allows to conclude.

\end{proof}

Let $u=\rho v\in\Hat{\S}_{\lambda,\gamma}$ and $w(x)=\exp\left(c'|x|^{2}\right)$ with $c>c'>0$ for $c$ given by \cref{Cor-PointWiseExp}. Then $wv\in\L^\infty(\R^d)$ and
$$|\nabla(wv)(x)|\lesssim w(x)|\nabla v(x)|+ |xv(x)| w(x) \lesssim \left(|v(x)|+|\nabla v(x)|\right)\exp\left(c|x|^{2}\right)\lesssim 1 $$
by \cref{Cor-PointWiseExp}. Thus $wv\in\H^{1,\infty}(\R^d)$ and $wu\in\H^{1-,\infty}(\R^d)$ by \cref{Lem-ProductRule}. In particular, in dimension 2, using
$$\nabla u = u\nabla Y + \rho\nabla v\in\mathcal{C}^{0-},$$ 
we get $w\nabla u\in\mathcal{C}^{0-}$. This shows $\nabla u$ is still exponentially localized even if it lacks regularity.

\section{Energy ground-states}\label{Sec-EnergyGS}

We now explore the properties of energy ground-states. Recall the energy is defined by 
$$\forall u\in\D^{1,2},\; \mathcal{E}(u)= \frac{1}{2}\langle -Au,u\rangle - \frac{\lambda}{2\gamma+2}\int_{\R^d}|u|^{2\gamma+2}\d x$$
and the mass is defined by 
$$\forall u\in\L^2(\R^d),\; \mathcal{M}(u)= \frac{1}{2}|u|_{\L^2}^2.$$
Moreover, for $u=\rho v\in\D^{1,2}$, one can rewrite the energy as
\begin{equation}\label{Eq-EnergyVside}
    \mathcal{E}(u) = \frac{1}{2}\left(\int_{\R^d} |\rho\nabla v|^2+(1-Y)|\rho xv|^2\d x - \langle Z,|\rho v|^2\rangle\right) - \frac{\lambda}{2\gamma+2}\int_{\R^d}|\rho v|^{2\gamma+2}\d x,
\end{equation}
with $Y=(-H)^{-1}\xi$, $Z=(Y')^2$ in dimension 1 and $Z=\wick{|\nabla Y|^2}$ in dimension 2. For any $m>0$, we define the minimal energy level as
$$E(m) = \inf_{\Atop{u\in\D^{1,2}}{\mathcal{M}(u)=m}}\mathcal{E}(u).$$
If it is finite, we say that $u\in\D^{1,2}$ is a ground-state of the energy at mass $m$ if $\mathcal{E}(u)=E(m)$ and $\mathcal{M}(u)=m.$ A standard Lagrange multiplier argument then implies it exists $\omega_u\in\R$ such that $(\omega_u,u)\in\S_{\lambda,\gamma}$.

\subsection{Existence of energy ground-states}

First, we prove the existence of an energy ground-state in the defocusing case or in the focusing subcritical case. It relies on the following lemma, allowing to use compactness methods.

\begin{lemme}\label[lemme]{Lem-ControlD12Energy}
    Let $\lambda\in\R$ and $\gamma>0$. If $\lambda>0$, assume $\gamma<\frac{2}{d}$. There exists $C>0$ such that
    $$\forall u\in\D^{1,2},\; |u|_{\D^{1,2}}^2\leqslant C\left(\mathcal{E}(u)+\mathcal{M}(u)+\lambda_+ \mathcal{M}(u)^{1+\frac{\gamma}{2-d \gamma}}\right).$$
\end{lemme}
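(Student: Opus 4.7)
The plan is to combine the quasi-coercivity of the bilinear form $a$ from \cref{Prop-Quasi-coercivity-a-1d,Prop-Quasi-coercivity-a-2d} with a Gagliardo--Nirenberg interpolation, in order to absorb the focusing nonlinearity into the $\D^{1,2}$-contribution. From the definition of $\mathcal{E}(u)$ one reads
\[
\langle -Au,u\rangle_{\D^{-1,2},\D^{1,2}} = 2\mathcal{E}(u) + \frac{\lambda}{\gamma+1}\int_{\R^d}|u|^{2\gamma+2}\d x,
\]
while quasi-coercivity gives $\langle -Au,u\rangle = a(u,u) \geqslant \tfrac{1}{2}|u|_{\D^{1,2}}^2-2\delta\mathcal{M}(u)$. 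Combining,
\[
\tfrac{1}{2}|u|_{\D^{1,2}}^2\leqslant 2\mathcal{E}(u)+2\delta\mathcal{M}(u)+\tfrac{\lambda}{\gamma+1}\int_{\R^d}|u|^{2\gamma+2}\d x,
\]
which already yields the claim with $\lambda_+=0$ when $\lambda\leqslant0$, the last integral being non-positive and the hypothesis $\gamma<2/d$ being unused in this case.

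For $\lambda>0$ with $d\gamma<2$ the nonlinear term must be controlled by the classical Gagliardo--Nirenberg inequality. In dimension $1$, $\D^{1,2}=\W^{1,2}\hookrightarrow\H^1(\R)$ so GN applies directly to $u$. In dimension $2$ the formula $\nabla u=\rho(\nabla v+v\nabla Y)$ features the ill-defined product $|\nabla Y|^2$ and one cannot use $|\nabla u|_{\L^2}$ directly; I would therefore transfer the inequality to $v=\rho^{-1}u$, using that $\rho,\rho^{-1}\in\L^\infty$ almost surely so that $|u|_{\D^{1,2}}\approx|v|_{\W^{1,2}}$ and $|u|_{\L^2}\approx|v|_{\L^2}$ with random constants depending on the enhanced noise $\Xi$, and apply classical GN to $v$. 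Both cases lead to a bound of the form
\[
\int_{\R^d}|u|^{2\gamma+2}\d x\leqslant C_\Xi|u|_{\D^{1,2}}^{d\gamma}|u|_{\L^2}^{2\gamma+2-d\gamma}.
\]

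Since $d\gamma<2$, I would conclude by Young's inequality with conjugate exponents $2/(d\gamma)$ and $2/(2-d\gamma)$: for every $\varepsilon>0$,
\[
|u|_{\D^{1,2}}^{d\gamma}|u|_{\L^2}^{2\gamma+2-d\gamma}\leqslant\varepsilon|u|_{\D^{1,2}}^2+C_\varepsilon\,\mathcal{M}(u)^{1+\frac{\gamma}{2-d\gamma}},
\]
and pick $\varepsilon$ small enough (proportional to $(\gamma+1)/\lambda$) to absorb the $\D^{1,2}$-contribution into the left-hand side of the previous display. The most delicate step, I expect, will be the dimension-$2$ argument: passing the Gagliardo--Nirenberg estimate through the exponential transform $u=\rho v$ without touching the renormalized quantity $\wick{|\nabla Y|^2}$, and ensuring the constants in the exponent of $\mathcal{M}(u)$ match those claimed.
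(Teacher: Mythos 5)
Your proof is correct and takes essentially the same route as the paper: quasi-coercivity of the form $a$ to reduce everything to controlling $\int|u|^{2\gamma+2}\d x$, a Gagliardo--Nirenberg-type bound $\int|u|^{2\gamma+2}\d x\lesssim_\Xi|u|_{\D^{1,2}}^{d\gamma}|u|_{\L^2}^{2\gamma+2-d\gamma}$, and Young's inequality; the only (cosmetic) difference is that the paper derives this bound from the Hermite--Sobolev embedding $\W^{\frac{d\gamma}{2\gamma+2},2}\hookrightarrow\L^{2\gamma+2}$ together with interpolation and $\W^{s,2}=\D^{s,2}$ for $s\in(0,1)$, whereas you pass to $v=\rho^{-1}u$ and use classical Gagliardo--Nirenberg, which is equally valid thanks to \cref{Eq-NormEquivD12} and $\rho,\rho^{-1}\in\L^\infty$ (your worry about $\wick{|\nabla Y|^2}$ is moot since you never touch $|\nabla u|_{\L^2}$). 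On the one point you flagged as delicate: Young's inequality with exponents $2/(d\gamma)$ and $2/(2-d\gamma)$ actually yields $\mathcal{M}(u)^{1+\frac{2\gamma}{2-d\gamma}}$ (e.g.\ $m^3$ when $d=\gamma=1$), not $\mathcal{M}(u)^{1+\frac{\gamma}{2-d\gamma}}$; this discrepancy is already present in the statement and in the paper's own proof, and is harmless for all later uses of the lemma (only the positivity of the exponent's excess over $1$ matters), but the exponents do not in fact ``match those claimed.''
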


\begin{proof}

    Let $u\in\D^{1,2}$ and denote $m=\mathcal{M}(u)$. Applying \cref{Prop-Quasi-coercivity-a-1d,Prop-Quasi-coercivity-a-2d} one has
    \begin{align*}
        \frac{1}{2}|u|_{\D^{1,2}} &\leqslant \langle -A u, u\rangle +  \delta |u|_{\L^2}^2\\
        &\leqslant 2\mathcal{E}(u)+2\delta m + \frac{\lambda}{2\gamma+2}\int_{\R^d}|u|^{2\gamma+2}\d x.
    \end{align*}
    Thus, if $\lambda \leqslant 0$, the result holds with $C=4\max(1,\delta)$. 
    If $\lambda>0$ and $\gamma<\frac{2}{d}$, then we use Sobolev embeddings and interpolation to get
    $$\int_{\R^d}|u|^{2\gamma+2}\d x \lesssim |u|_{\W^{\frac{d\gamma}{2\gamma+2},2}}^{2\gamma+2} \lesssim  |u|_{\D^{1,2}}^{d\gamma}|u|_{\L^2}^{(2-d)\gamma+2},$$
    as $\frac{d\gamma}{2\gamma+2}\in(0,1)$, thus \cref{Lem-ProductRule} implies $\W^{\frac{d\gamma}{2\gamma+2},2}=\D^{\frac{d\gamma}{2\gamma+2},2}$. By Young's inequality, as $d\gamma<2$, there exists $c>0$ such that
    $$\frac{1}{2\gamma+2}\int_{\R^d}|u|^{2\gamma+2}\d x \leqslant \frac{1}{4\lambda}|u|_{\D^{1,2}}^2+cm^{1+\frac{\gamma}{2-d \gamma}}.$$
    Hence, we get
    $$\frac{1}{4}|u|_{\D^{1,2}}^2\leqslant 2\mathcal{E}(u)+2\delta m + \lambda c m^{1+\frac{\gamma}{2-d \gamma}}$$
    and we conclude choosing $C=\max(8,8\delta,4c)$.
    
\end{proof}

\begin{prop}\label[prop]{Prop-ExistenceEnergyGS}
    Assume $\lambda\leqslant 0$ or $\gamma<\frac{2}{d}$. Then, for any $m>0$, there exists $\phi_m\in\D^{1,2}$ such that $\phi_m\geqslant 0$, $\mathcal{M}(\phi_m)=m$ and $\mathcal{E}(\phi_m)=E(m)$. Moreover, if $\lambda<0$, such a $\phi_m$ is unique and every non-negative minimizing sequence converges to $\phi_m$ in $\D^{1,2}$.
\end{prop}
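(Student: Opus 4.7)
The plan is to apply the direct method of the calculus of variations, with coercivity furnished by \cref{Lem-ControlD12Energy} and pre-compactness by the compact Sobolev--Hermite embeddings.

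First, \cref{Lem-ControlD12Energy} combined with the mass constraint gives $E(m)>-\infty$, so I pick a minimizing sequence $(u_n)\subset\D^{1,2}$ with $\mathcal{M}(u_n)=m$ and $\mathcal{E}(u_n)\to E(m)$. Writing $u_n=\rho v_n$, the pointwise inequality $\bigl|\nabla|v_n|\bigr|\leqslant|\nabla v_n|$ and the identity $|x|\,|v_n|=|xv_n|$ together with the expression \cref{Eq-EnergyVside} yield $\mathcal{E}(|u_n|)\leqslant\mathcal{E}(u_n)$, while $\mathcal{M}$ and $\int|\cdot|^{2\gamma+2}$ are unchanged, so I may assume $u_n\geqslant 0$. \cref{Lem-ControlD12Energy} then bounds $(u_n)$ in $\D^{1,2}$, and up to a subsequence $u_n\rightharpoonup\phi_m$. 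Equivalently, via \cref{Eq-NormEquivD12}, $v_n=\rho^{-1}u_n\rightharpoonup v$ in $\W^{1,2}$; the compact embedding $\W^{1,2}\hookrightarrow\W^{0,q}$ for every $q\in[2,+\infty)$ from \cref{Sobolev-embeddings}, followed by multiplication by $\rho\in\L^\infty$, upgrades this to $u_n\to\phi_m=\rho v$ strongly in $\L^q(\R^d)$. In particular $\mathcal{M}(\phi_m)=m$, $\int|u_n|^{2\gamma+2}\to\int|\phi_m|^{2\gamma+2}$, and $\phi_m\geqslant 0$.

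By \cref{Prop-Quasi-coercivity-a-1d,Prop-Quasi-coercivity-a-2d}, the form $u\mapsto a(u,u)+\delta|u|_{\L^2}^2$ is equivalent to the square of the $\D^{1,2}$ norm and therefore weakly lower semicontinuous; combined with the strong $\L^2$ convergence of $u_n$ this yields $\liminf\langle-Au_n,u_n\rangle\geqslant\langle-A\phi_m,\phi_m\rangle$, whence $\mathcal{E}(\phi_m)\leqslant E(m)$, which must be an equality. This establishes existence of a non-negative energy ground state.

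For $\lambda<0$, I would establish uniqueness via the change of variable $w=u^2$ for $u\geqslant 0$. Under this substitution the constraint $\int w=2m$ is affine, the kinetic part reads as the Fisher-information-type quantity $\tfrac14\int|\nabla w|^2/w$ which is a convex functional of $w$, the remaining contributions to $a$ are linear in $w=u^2$ once the renormalization in \cref{Eq-QuadraticFormH+xi} is taken into account, and $-\tfrac{\lambda}{2\gamma+2}\int w^{\gamma+1}$ is strictly convex since $|\lambda|>0$ and $\gamma>0$; testing the energy along $w_t=tw_1+(1-t)w_0$ between two would-be minimizers then forces $w_0=w_1$, so the non-negative minimizer $\phi_m$ is unique. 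For the convergence statement, the previous argument shows that every weakly convergent subsequence of a non-negative minimizing sequence has limit $\phi_m$, hence the full sequence converges weakly; then $\langle-Au_n,u_n\rangle\to\langle-A\phi_m,\phi_m\rangle$ (extracted from $\mathcal{E}(u_n)\to\mathcal{E}(\phi_m)$ and $\int|u_n|^{2\gamma+2}\to\int|\phi_m|^{2\gamma+2}$) together with $|u_n|_{\L^2}\to|\phi_m|_{\L^2}$ deliver convergence of the equivalent norm $a(\cdot,\cdot)+\delta|\cdot|_{\L^2}^2$, upgrading weak to strong convergence in $\D^{1,2}$. The main obstacle I anticipate is making the $w=u^2$ substitution rigorous when $u$ may vanish; I would circumvent this either through the standard approximation $u\rightsquigarrow\sqrt{u^2+\varepsilon}$ with $\varepsilon\to 0$, or by noting that any non-negative minimizer solves \cref{Eq-StationaryAGP} by a Lagrange multiplier argument and hence is strictly positive by \cref{Prop-Apriori>0}.
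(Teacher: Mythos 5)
Your proposal is correct and, for the existence and strong-convergence parts, follows the paper's proof essentially verbatim: diamagnetic reduction to non-negative competitors, boundedness in $\D^{1,2}$ via \cref{Lem-ControlD12Energy}, compactness, and weak lower semicontinuity of the quasi-coercive form, then convergence of the equivalent norm $a(\cdot,\cdot)+\delta|\cdot|_{\L^2}^2$ to upgrade weak to strong convergence. The uniqueness step rests on the same hidden-convexity idea, but you formulate it along the whole segment $w_t=tw_1+(1-t)w_0$ with the kinetic term written as a Fisher-information functional, which is what creates the division-by-$w$ obstacle you flag at the end. The paper sidesteps this entirely by evaluating the convexity only at the midpoint: it sets $f=\sqrt{\tfrac{\phi^2+\psi^2}{2}}$, applies the diamagnetic inequality directly to $\langle -Af,f\rangle$ (no quotient $|\nabla w|^2/w$ ever appears), and uses convexity of $t\mapsto t^{\gamma+1}$ for the nonlinear term; the equality case is then resolved by the Minkowski equality criterion in $\L^{\gamma+1}$, giving $\phi^2=\alpha\psi^2$ and hence $\phi=\psi$ after matching masses. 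Your alternative fix — invoking \cref{Prop-Apriori>0} to get strict positivity of minimizers before substituting $w=u^2$ — is also legitimate, but note it requires first running the Lagrange multiplier argument, whereas the midpoint formulation needs no regularity or positivity input at all; your pointwise strict-convexity conclusion $w_0=w_1$ a.e.\ is, on the other hand, slightly more direct than the paper's Minkowski detour.
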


\begin{proof}

    We use a standard compactness method to construct $\phi_m$. First, remark for any $u=\rho v\in\D^{1,2}$, the diamagnetic inequality implies $|u|=\rho|v|\in\D^{1,2}$ and $\mathcal{E}(|u|)\leqslant \mathcal{E}(u)$. Thus, we only need to construct a $u\in\D^{1,2}$ with $\mathcal{E}(u)=E(m)$ and $\mathcal{M}(u)=m$. Let $m>0$ and $(u_n)_{n\in\N}\subset\D^{1,2}$, with $\mathcal{M}(u_n)=m$, be a minimizing sequence of the energy. Then, applying \cref{Lem-ControlD12Energy} one has
    $$|u_n|_{\D^{1,2}}^2\leqslant C\left(\mathcal{E}(u_n)+ m + \lambda_+ m^{1+\frac{\gamma}{2-d \gamma}}\right).$$
    Thus, $(u_n)_{n\in\N}$ is bounded in $\D^{1,2}$ and Rellich's theorem implies there exists $u\in\D^{1,2}$ such that, up to a subsequence, $u_n$ converges to $u$, weakly in $\D^{1,2}$ and strongly in $\L^2(\R^d)\cap\L^{2\gamma+2}(\R^d)$.
    Hence, $\mathcal{M}(u)=m$ and by sequential weak lower semicontinuity of quadratic forms, 
    \begin{align*}
        E(m) \leqslant \mathcal{E}(u) &= \frac{1}{2}\langle -Au,u\rangle - \frac{\lambda}{2\gamma+2}\int_{\R^d}|u|^{2\gamma+2}\d x\\
        &\leqslant \liminf_{n\to\infty}\frac{1}{2}\langle -Au_n,u_n\rangle - \lim_{n\to\infty}\frac{\lambda}{2\gamma+2}\int_{\R^d}|u_n|^{2\gamma+2}\d x\\
        &\leqslant \limsup_{n\to+\infty} \mathcal{E}(u_n) = E(m).
    \end{align*}
    This concludes the existence part.\\
    
    Now, assume $\lambda<0$ and let $\phi,\psi$ be non-negative ground-states of the energy at mass $m$. Define $f=\sqrt{\frac{\phi^2+\psi^2}{2}}$. Then, the diamagnetic inequality gives
    $$\langle -Af,f\rangle \leqslant \frac{1}{2}\left(\langle -A\phi,\phi\rangle+\langle -A\psi,\psi\rangle\right)$$
    and moreover,
    $$\int_{\R^d}|f|^{2\gamma+2}\d x \leqslant \frac{1}{2}\left(\int_{\R^d}|\phi|^{2\gamma+2}\d x+\int_{\R^d}|\psi|^{2\gamma+2}\d x\right).$$
    Thus, 
    $$E(m) \leqslant \mathcal{E}(f) \leqslant \frac{1}{2}\left(\mathcal{E}(\phi)+\mathcal{E}(\psi)\right) = E(m).$$
    In particular, 
    $$\int_{\R^d}|f|^{2\gamma+2}\d x = \frac{1}{2}\left(\int_{\R^d}|\phi|^{2\gamma+2}\d x+\int_{\R^d}|\psi|^{2\gamma+2}\d x\right),$$
    that is
    $$\left|\phi^2+\psi^2\right|_{\L^{\gamma+1}} = 2^{\frac{\gamma}{\gamma+1}}\left(\left|\phi^2\right|_{\L^{\gamma+1}}^{\gamma+1}+\left|\psi^2\right|_{\L^{\gamma+1}}^{\gamma+1}\right)^{\frac{1}{\gamma+1}} \geqslant \left|\phi^2\right|_{\L^{\gamma+1}}+\left|\psi^2\right|_{\L^{\gamma+1}}.$$
    Thus, there exists $\alpha\in\C$ such that $\phi^2=\alpha\psi^2$ a.e. As $\phi,\psi\geqslant 0$ and $\mathcal{M}(\phi)=\mathcal{M}(\psi)$, then $\phi=\psi$ a.e. We conclude $\phi=\psi$, as standing waves are continuous by \cref{Lem-RegAprioriSW}.\\
    
    Finally, assume $u_n\geqslant 0$ is a minimizing sequence. Then, up to a subsequence, there exists some $u\in\D^{1,2}$ such that $u_n$ converges to $u$, weakly in $\D^{1,2}$ and strongly in $\L^2(\R^d)\cap\L^{2\gamma+2}(\R^d)$. \cref{Lem-RegAprioriSW} implies $u$ is continuous. Assume $u$ is not non-negative, then there exists there exists a ball $B\subset\R^d$ and $\eps<0$ such that $u_{|B}\leqslant -\eps$. Then, as $u_n$ converges to $u$ in $\L^2(\R^d)$, $(u_n,\indic{B})_{\L^2}$ converges to $(u,\indic{B})_{\L^2}\leqslant-\eps|B|<0$. As $u_n$ is non-negative for every $n\in\N$ it follows, it follows $(u_n,\indic{B})_{\L^2}\geqslant 0$, which contradicts the fact that $(u,\indic{B})_{\L^2}<0$. Hence, $u\geqslant 0$ and by uniqueness of the non-negative ground-state, $u_n$ converges to $\phi_m$, weakly in $\D^{1,2}$ and strongly in $\L^2(\R^d)$ and $\L^{2\gamma+2}(\R^d)$, along the whole sequence. Finally, as $\mathcal{E}(u_n)$ converges to $E(m)=\mathcal{E}(\phi_m)$, we deduce that $u_n$ converges strongly to $\phi_m$ in $\D^{1,2}$.

\end{proof}

It is classical that $\displaystyle\argmin_{u\in\D^{1,2},\; \mathcal{M}(u)=m}\mathcal{E}(u)$ is stable in $\D^{1,2}$ in the following sense. Given a $u_0\in\D^{1,2}$, we denote by $u(t)$ the unique local solution of \cref{Eq-AGP} starting from $u_0$, which has been constructed in \cite{MackowiakStrichartzConfAnderson}. We say a set $E\subset\D^{1,2}$ is stable if any local solution starting from $E$ is global and
$$\forall \eps>0,\; \exists \delta>0,\; \forall u_0\in \D^{1,2},\; 
\inf_{f\in E}|u_0-f|_{\D^{1,2}}\leqslant \delta \Rightarrow \sup_{t\in\R}\inf_{f\in E}|u(t)-f|_{\D^{1,2}}\leqslant \eps.$$
The proof of the stability of $\displaystyle\argmin_{u\in\D^{1,2},\; \mathcal{M}(u)=m}\mathcal{E}(u)$ in $\D^{1,2}$ follows by the exact same contradiction argument as in the classical case \cite{CazenaveLionsStability}, using energy and mass conservation of solutions. When $E$ of the form $\left\{\e^{i\theta}\phi,\; \theta\in\R\right\}$ is stable, we say that $\phi$ is orbitally stable.\\

For each $\phi\in\displaystyle\argmin_{u\in\D^{1,2},\; \mathcal{M}(u)=m}\mathcal{E}(u)$, there exists $\omega_\phi\in\R$ such $(\omega_\phi,\phi)\in\S_{\lambda,\gamma}$. A direct computation gives
\begin{equation}\label{Eq-Omega(phi)}
    \omega_\phi = - \frac{E(m)}{m}+\frac{\lambda\gamma}{2(\gamma+1)m}\int_{\R^d}|\phi|^{2\gamma+2}\d x.
\end{equation}
Remark it only depends on $|\phi|$. Thus, using \cref{Prop-Apriori>0} for $|\phi_m|$, we immediately deduce $\phi=\e^{i\theta}|\phi|$, hence the orbital stability of the defocusing energy ground-state.

\begin{cor}\label[cor]{Cor-OrbitalStabDefocEGS}
    Assume $\lambda<0$ and $\gamma>0$, then for any $m>0$,
    $$\argmin_{\Atop{u\in\D^{1,2}}{\mathcal{M}(u)=m}}\mathcal{E}(u) = \left\{\e^{i\theta}\phi_m,\; \theta\in\R\right\}.$$
    In particular, $\phi_m$ is orbitally stable.
\end{cor}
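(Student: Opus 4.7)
The plan is to establish the set equality
$$\argmin_{\substack{u\in\D^{1,2}\\\mathcal{M}(u)=m}}\mathcal{E}(u) = \left\{\e^{i\theta}\phi_m,\; \theta\in\R\right\},$$
from which orbital stability of $\phi_m$ will follow by specializing the general $\D^{1,2}$-stability of the minimizer set (recalled just before the corollary, itself a consequence of mass and energy conservation along the Cauchy problem as in \cite{CazenaveLionsStability}) to this particular set. Three ingredients already at hand drive the equality: the uniqueness of the non-negative ground state in the defocusing case (\cref{Prop-ExistenceEnergyGS}), the formula \cref{Eq-Omega(phi)} expressing the Lagrange multiplier $\omega_\phi$ in terms of $|\phi|$ alone, and the rigidity statement of \cref{Prop-Apriori>0}.

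I would start by picking an arbitrary $\phi\in\displaystyle\argmin_{\mathcal{M}(u)=m}\mathcal{E}(u)$ and applying a standard Lagrange multiplier argument to obtain $\omega_\phi\in\R$ with $(\omega_\phi,\phi)\in\S_{\lambda,\gamma}$. The diamagnetic inequality employed in the proof of \cref{Prop-ExistenceEnergyGS} ensures that $|\phi|\in\D^{1,2}$ with $\mathcal{M}(|\phi|)=m$ and $\mathcal{E}(|\phi|)\leqslant\mathcal{E}(\phi)=E(m)$, so $|\phi|$ is itself a non-negative minimizer. Since $\lambda<0$, the uniqueness part of \cref{Prop-ExistenceEnergyGS} forces $|\phi|=\phi_m$. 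Formula \cref{Eq-Omega(phi)} then shows $\omega_\phi=\omega_{\phi_m}$, so $(\omega_{\phi_m},\phi)\in\S_{\lambda,\gamma}$ with $|\phi|=\phi_m\geqslant 0$, and \cref{Prop-Apriori>0} applied with $u=\phi_m$ produces $\theta\in\R$ such that $\phi=\e^{i\theta}\phi_m$. The reverse inclusion is immediate from the phase invariance of $\mathcal{E}$ and $\mathcal{M}$.

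The argument is essentially bookkeeping of previously established results, so I do not anticipate a genuine obstacle. The one subtle point is that \cref{Prop-Apriori>0} requires a pair $(\omega,\phi)\in\S_{\lambda,\gamma}$ to be matched with a non-negative $u$ sharing both the same modulus \emph{and} the same frequency $\omega$; it is precisely the frequency formula \cref{Eq-Omega(phi)}, combined with the diamagnetic step producing $|\phi|=\phi_m$, that certifies $\omega_\phi=\omega_{\phi_m}$ and thereby legitimates the application of \cref{Prop-Apriori>0} to close the argument.
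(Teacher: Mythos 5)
Your argument is correct and follows essentially the same route as the paper: the paragraph preceding the corollary derives the frequency formula \cref{Eq-Omega(phi)}, observes it depends only on $|\phi|$, identifies $|\phi|=\phi_m$ via the diamagnetic inequality and the uniqueness in \cref{Prop-ExistenceEnergyGS}, and then invokes \cref{Prop-Apriori>0} exactly as you do. Your remark that matching the frequencies is the one point requiring care is precisely the role the paper assigns to \cref{Eq-Omega(phi)}.
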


\subsection{Dependence on the mass}

We now explore how $\phi_m$ behaves as $m$ varies. First, it is easy to show the continuity of the minimal energy as a function of mass.

\begin{lemme}\label[lemme]{Lem-ContinuityE(m)}
    Assume $\lambda\leqslant 0$ or $\gamma<\frac{2}{d}$. Then, $m\in(0,+\infty)\mapsto E(m)\in\R$ is continuous.
\end{lemme}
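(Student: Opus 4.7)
The plan is to establish continuity by showing upper and lower semicontinuity of $E$ at each fixed $m>0$ via a scaling argument combined with the compactness already used in \cref{Prop-ExistenceEnergyGS}.

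For upper semicontinuity, I would exploit the simple multiplicative scaling $u\mapsto \alpha u$. Fix $m>0$ and let $\phi_m$ be a minimizer given by \cref{Prop-ExistenceEnergyGS}. For any $m'>0$, set $\alpha=\sqrt{m'/m}$, so $\mathcal{M}(\alpha\phi_m)=m'$ and
\[
E(m') \leqslant \mathcal{E}(\alpha\phi_m) = \frac{m'}{m}\cdot\frac{1}{2}\langle -A\phi_m,\phi_m\rangle - \left(\frac{m'}{m}\right)^{\gamma+1}\frac{\lambda}{2\gamma+2}\int_{\R^d}|\phi_m|^{2\gamma+2}\d x.
\]
Sending $m'\to m$, the right-hand side converges to $\mathcal{E}(\phi_m)=E(m)$, giving $\limsup_{m'\to m}E(m')\leqslant E(m)$.

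For lower semicontinuity, let $m_n\to m$ and let $\phi_n\in\D^{1,2}$ be a minimizer at mass $m_n$ (which exists by \cref{Prop-ExistenceEnergyGS}). The upper semicontinuity step applied in reverse shows $E(m_n)$ is bounded above, so \cref{Lem-ControlD12Energy} yields
\[
|\phi_n|_{\D^{1,2}}^2 \leqslant C\left(E(m_n) + m_n + \lambda_+ m_n^{1+\frac{\gamma}{2-d\gamma}}\right) \leqslant C',
\]
uniformly in $n$. By Rellich's theorem (\cref{Sobolev-embeddings}), up to extracting a subsequence, there exists $\phi\in\D^{1,2}$ such that $\phi_n\rightharpoonup\phi$ weakly in $\D^{1,2}$ and strongly in $\L^2(\R^d)\cap\L^{2\gamma+2}(\R^d)$. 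In particular $\mathcal{M}(\phi)=m$, and by weak lower semicontinuity of the quasi-coercive quadratic form $u\mapsto\langle -Au,u\rangle+\delta|u|_{\L^2}^2$ together with the strong convergence of $|u|_{\L^2}^2$ and of $\int|u|^{2\gamma+2}$, we obtain
\[
\mathcal{E}(\phi) \leqslant \liminf_{n\to\infty}\mathcal{E}(\phi_n) = \liminf_{n\to\infty}E(m_n).
\]
Since $\mathcal{M}(\phi)=m$ forces $E(m)\leqslant\mathcal{E}(\phi)$, we conclude $E(m)\leqslant\liminf_{n\to\infty}E(m_n)$. Combined with the previous step this gives $E(m_n)\to E(m)$, hence continuity.

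The only mildly delicate point is ensuring the weak lower semicontinuity of $\langle -A\cdot,\cdot\rangle$ along the subsequence; this is where \cref{Prop-Quasi-coercivity-a-1d,Prop-Quasi-coercivity-a-2d} are essential, since they allow rewriting the energy as a sum of a nonnegative quadratic form (modulo a multiple of $\mathcal{M}$, which converges strongly) and a term that passes to the limit by strong $\L^{2\gamma+2}$ convergence. Everything else reduces to elementary scaling.
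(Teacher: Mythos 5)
Your argument is correct. The upper-semicontinuity half is exactly the paper's device: scale a minimizer at one mass to the other mass and let the masses converge. Where you genuinely diverge is the lower bound. The paper never invokes compactness for this lemma: it applies the same scaling inequality with the roles of $m$ and $m'$ exchanged, testing $E(m)$ against $\sqrt{m/m'}\,\phi_{m'}$. In the defocusing case this gives $E(m)\leqslant \max\bigl(m/m',(m/m')^{\gamma+1}\bigr)E(m')$, hence a two-sided squeeze as $m'\to m$; in the focusing subcritical case it gives $E(m)\leqslant (m/m')^{\gamma+1}E(m')+\tfrac12\bigl(m/m'-(m/m')^{\gamma+1}\bigr)\langle -A\phi_{m'},\phi_{m'}\rangle$, with the extra term bounded uniformly for $m'$ near $m$ via \cref{Lem-ControlD12Energy}. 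You instead re-run the direct method: bound the minimizers $\phi_{m_n}$ in $\D^{1,2}$, extract a weak limit of mass $m$, and combine weak lower semicontinuity of the quasi-coercive form with strong $\L^2\cap\L^{2\gamma+2}$ convergence. Both routes are valid; the paper's is shorter and extraction-free, while yours is more robust (it works verbatim for near-minimizers and is the same machinery the paper reuses for \cref{Cor-C0curveEnergyGS}). One cosmetic point: in your lower-semicontinuity step you should first pass to a subsequence along which $E(m_n)$ converges to $\liminf_n E(m_n)$ before extracting the weakly convergent sub-subsequence, so that the final inequality controls the liminf of the full sequence; this is standard and does not affect correctness.
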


\begin{proof}
    For any $m>0$, let $\phi_m\in\displaystyle\argmin_{u\in\D^{1,2},\; \mathcal{M}(u)=m}\mathcal{E}(u)$. 
    \begin{itemize}
        \item Assume $\lambda\leqslant 0$, for any $m,m'>0$, taking $\sqrt{\frac{m}{m'}}\phi_{m'}$ as a test function implies
        $$E(m)\leqslant \max\left(\frac{m}{m'},\left(\frac{m}{m'}\right)^{\gamma+1}\right)E(m').$$
        It follows $E(m')$ converges to $E(m)$ as $m'$ goes to $m$.
        \item Now, if $\lambda>0$, for any $m,m'>0$, taking again $\sqrt{\frac{m}{m'}}\phi_{m'}$ as a test function implies
        \begin{equation}\label{Eq-ControlE(m)parE(m')Focalisant}
            E(m)\leqslant \left(\frac{m}{m'}\right)^{\gamma+1}E(m') + \frac{1}{2}\left(\frac{m}{m'}-\left(\frac{m}{m'}\right)^{\gamma+1}\right)\langle -A\phi_{m'},\phi_{m'}\rangle.
        \end{equation}
        Moreover, \cref{Lem-ControlD12Energy,Eq-ControlE(m)parE(m')Focalisant} imply $\langle -A\phi_{m'},\phi_{m'}\rangle$ is bounded for $m'$ in a neighborhood of $m$, thus \cref{Eq-ControlE(m)parE(m')Focalisant} implies  $E(m')$ converges to $E(m)$ as $m'$ goes to $m$.
    \end{itemize} 

\end{proof}

When $\lambda=0$, the Spectral theorem implies $E(m)=m\mu_0$, thus $\omega_{\phi}=-\mu_0$, where $\mu_0$ is the lowest eigenvalue of $-A$. \cref{Prop-SpectralGapA1d,Prop-SpectralGapA2d} imply $\phi_{m} = \sqrt{2m}\e^{i\theta}\varphi_0$ for some $\theta\in\R$, with $\varphi_0>0$ the eigenfunction of $-A$ associated to $\mu_0$. From this, we get the following asymptotic for $\omega_{\phi}$ when $\lambda\neq 0$.

\begin{prop}\label[prop]{Prop-FrequencyEnergyGS}
    Let $m>0$ and $\phi_m\in\displaystyle\argmin_{u\in\D^{1,2},\;\mathcal{M}(u)=m}\mathcal{E}(u)$. Then $\omega_{\phi_m}+\mu_0$ has the same sign as $\lambda$ and $$\omega_{\phi_m}+\mu_0=O\left(m^\gamma\right)$$ as $m$ goes to $0$. Moreover, if $\lambda<0$ (respectively $\lambda>0$), $\omega_{\phi_m}$ goes to $-\infty$ (respectively $+\infty$) as $m$ goes to infinity.
\end{prop}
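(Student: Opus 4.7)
The plan is to (i) deduce the sign of $\omega_{\phi_m}+\mu_0$ via testing the Euler--Lagrange equation against the first eigenfunction $\varphi_0$, (ii) handle the small-mass asymptotic by rescaling $u_m=\phi_m/\sqrt{2m}$ and using compactness to push $u_m$ to $\varphi_0$, and (iii) treat the two large-mass regimes separately: $\lambda>0$ directly from the energy formula, and $\lambda<0$ by a contradiction argument combining the $|x|^2$-weight in $\D^{1,2}$ with Hölder's inequality.

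Since $\omega_{\phi_m}$ depends only on $|\phi_m|$ by \cref{Eq-Omega(phi)} and $|\phi_m|$ is itself a minimizer by the diamagnetic inequality, I may assume $\phi_m>0$ by \cref{Prop-ExistenceEnergyGS,Prop-Apriori>0}. Let $\varphi_0>0$ be the $\L^2$-normalized eigenfunction of $-A$ associated with $\mu_0$, provided by \cref{Prop-SpectralGapA1d,Prop-SpectralGapA2d}. Pairing the Euler--Lagrange equation $-A\phi_m-\lambda\phi_m^{2\gamma+1}+\omega_{\phi_m}\phi_m=0$ with $\varphi_0$ and using self-adjointness of $-A$ yields the key identity
\begin{equation*}
    (\omega_{\phi_m}+\mu_0)(\phi_m,\varphi_0)_{\L^2} = \lambda \int_{\R^d} \phi_m^{2\gamma+1}\varphi_0\,\d x,
\end{equation*}
which immediately gives the sign claim since both inner products are strictly positive.

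For the small-mass regime I set $u_m=\phi_m/\sqrt{2m}$, so that $|u_m|_{\L^2}=1$. Testing with $\sqrt{2m}\,\varphi_0$ gives $E(m)\leq m\mu_0+O(m^{\gamma+1})$, and \cref{Lem-ControlD12Energy} then provides $|u_m|_{\D^{1,2}}=O(1)$. Using the compact embedding $\D^{1,2}\hookrightarrow \L^2\cap\L^{2\gamma+2}$, I extract $u_m\rightharpoonup u_*$ in $\D^{1,2}$ with $|u_*|_{\L^2}=1$; rescaling the energy identity forces $\langle -Au_m,u_m\rangle\to\mu_0$, hence by weak lower semicontinuity and Rayleigh's characterization $\langle -Au_*,u_*\rangle=\mu_0$, which together with $u_m\geq 0$ and simplicity of $\mu_0$ yields $u_*=\varphi_0$. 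Norm convergence $\langle -Au_m,u_m\rangle\to\langle -Au_*,u_*\rangle$ combined with the quasi-coercivity of \cref{Prop-Quasi-coercivity-a-1d,Prop-Quasi-coercivity-a-2d} upgrades the convergence to strong $\D^{1,2}$-convergence and hence strong $\L^{2\gamma+2}$-convergence. Substituting $\phi_m=\sqrt{2m}\,u_m$ in the identity above, dividing by $\sqrt{2m}$, and passing to the limit gives
\begin{equation*}
    \omega_{\phi_m}+\mu_0 = \lambda(2m)^\gamma\frac{\int_{\R^d} u_m^{2\gamma+1}\varphi_0\,\d x}{(u_m,\varphi_0)_{\L^2}} = \lambda(2m)^\gamma\bigl(|\varphi_0|_{\L^{2\gamma+2}}^{2\gamma+2}+o(1)\bigr) = O(m^\gamma).
\end{equation*}

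For the large-mass regime with $\lambda>0$, the same test function yields $E(m)\leq m\mu_0-c_\lambda m^{\gamma+1}$, so $-E(m)/m\to+\infty$; the nonlinear term in \cref{Eq-Omega(phi)} is non-negative, hence $\omega_{\phi_m}\to+\infty$. For $\lambda<0$ I argue by contradiction: if $-\omega_{\phi_m}\leq C$ along a subsequence, pairing the Euler--Lagrange equation with $\phi_m$ yields $\langle -A\phi_m,\phi_m\rangle+|\lambda||\phi_m|_{\L^{2\gamma+2}}^{2\gamma+2}=-2m\omega_{\phi_m}\leq 2mC$; quasi-coercivity then forces $|\phi_m|_{\D^{1,2}}^2=O(m)$, and by the explicit form of this norm $\int_{\R^d} |x|^2\phi_m^2\,\d x=O(m)$. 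A Chebyshev argument concentrates at least half of the $\L^2$-mass of $\phi_m$ on a fixed ball $B(0,R)$, and Hölder's inequality on $B(0,R)$ gives $|\phi_m|_{\L^{2\gamma+2}}^{2\gamma+2}\geq cm^{\gamma+1}$, contradicting the $O(m)$ upper bound as $m\to\infty$. The main obstacle is this last step: converting an a priori Lagrange-multiplier bound into spatial concentration via the $|x|^2$-weight inside $\D^{1,2}$, robustly under the random perturbation, which enters only through the equivalence constants in \cref{Prop-Quasi-coercivity-a-1d,Prop-Quasi-coercivity-a-2d}.
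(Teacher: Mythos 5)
Your proof is correct; all four claims go through. The skeleton coincides with the paper's in its essential estimates --- the test function $\sqrt{2m}\,\varphi_0$ giving $E(m)\leqslant \mu_0 m-\frac{\lambda(2m)^{\gamma+1}}{2\gamma+2}\int|\varphi_0|^{2\gamma+2}\d x$, and the identity $(\omega_{\phi_m}+\mu_0)(\phi_m,\varphi_0)_{\L^2}=\lambda\int\phi_m^{2\gamma+1}\varphi_0\,\d x$, which is exactly what the paper extracts by combining \cref{Eq-Omega(phi)} with its two-sided bounds on $E(m)$ --- but two steps are organized differently. For the small-mass rate, the paper only needs $|\phi_m|_{\D^{1,2}}^2=O(m)$ together with the Sobolev embedding $\D^{1,2}\hookrightarrow\L^{2\gamma+2}$ to bound $\frac{1}{2m}\int|\phi_m|^{2\gamma+2}\d x=O(m^\gamma)$; you instead run the full compactness argument $u_m\to\varphi_0$ in $\D^{1,2}$, which is essentially the content of \cref{Cor-SmallMassEnergyGS} and of the refined asymptotic stated after it. This is heavier than necessary for the $O(m^\gamma)$ claim alone (your quotient $\int u_m^{2\gamma+1}\varphi_0\,\d x/(u_m,\varphi_0)_{\L^2}$ needs the denominator bounded away from zero, which forces you to identify the limit), but it buys the sharp constant $\lambda(2m)^\gamma|\varphi_0|_{\L^{2\gamma+2}}^{2\gamma+2}$ in one pass, and you avoid circularity by identifying the limit through the Rayleigh quotient rather than through the equation. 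For $\lambda<0$ at large mass, your contradiction argument with Chebyshev on the $|x|^2$-weight plus H\"older on a fixed ball is precisely the proof of \cref{Lem-InterMassEnergNonlin} inlined; your version has the small advantage that the bounded-$\omega_{\phi_m}$ hypothesis makes all constants uniform, sidestepping the dependence of the constant in \cref{Lem-InterMassEnergNonlin} on $\omega$.
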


The defocusing case relies on the following interpolation lemma, whose proof is similar to the one of Proposition 6.1. (iv) in \cite{KavianSelfSimilar}, interpolating the $\L^2$-norm between the $\L^p$-norm and some weighted $\L^2$-norm.

\begin{lemme}\label[lemme]{Lem-InterMassEnergNonlin}
    For any $\eps>0$, $\omega\in\R$ and $p\in(2,+\infty]$, there exists $C=C(\eps,\omega,p)>0$ such that,
    $$\forall u\in\D^{1,2},\; \mathcal{M}(u)\leqslant \eps\langle (-A+\omega)u,u\rangle + C|u|_{\L^{p}}^2.$$
\end{lemme}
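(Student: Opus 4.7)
The plan is to interpolate via a spatial cutoff, exploiting the fact that the $\D^{1,2}$-norm controls the weighted $\L^2$-norm $\int |x|^2|u|^2\,\d x$. First I would note that, by the definition of the $\D^{1,2}$-norm (see \cref{Eq-D12nom}) together with the norm equivalence \cref{Eq-NormEquivD12}, and using that $\rho,\rho^{-1}\in\L^\infty$, there exists a constant $K>0$ such that
$$\forall u\in\D^{1,2},\; \int_{\R^d}|x|^2|u|^2\,\d x \leqslant K\,|u|_{\D^{1,2}}^2.$$
Combining this with \cref{Prop-Quasi-coercivity-a-1d,Prop-Quasi-coercivity-a-2d}, one gets a constant $C_1>0$ (depending only on the noise) such that
$$\int_{\R^d}|x|^2|u|^2\,\d x \leqslant C_1\left(\langle -Au,u\rangle + \delta\,|u|_{\L^2}^2\right).$$

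Next, given $R>0$, I would split the $\L^2$-norm as
$$|u|_{\L^2}^2 = \int_{|x|\leqslant R}|u|^2\,\d x + \int_{|x|>R}|u|^2\,\d x.$$
On the complement $\{|x|>R\}$, Markov's inequality gives $\int_{|x|>R}|u|^2\,\d x\leqslant R^{-2}\int |x|^2|u|^2\,\d x$. On the ball $\{|x|\leqslant R\}$, Hölder's inequality (with exponent $p/2$) yields
$$\int_{|x|\leqslant R}|u|^2\,\d x \leqslant c_d R^{d(1-2/p)}|u|_{\L^p}^2$$
(with the convention $1-2/p=1$ when $p=+\infty$, in which case the estimate reduces to $|B_R|\,|u|_{\L^\infty}^2$). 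Plugging in and rewriting $\langle -Au,u\rangle = \langle (-A+\omega)u,u\rangle - \omega|u|_{\L^2}^2$, I obtain
$$|u|_{\L^2}^2 \leqslant \frac{C_1}{R^2}\langle (-A+\omega)u,u\rangle + \frac{C_1(\delta+|\omega|)}{R^2}|u|_{\L^2}^2 + c_d R^{d(1-2/p)}|u|_{\L^p}^2.$$

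Finally, given $\eps>0$, I would choose $R$ large enough so that simultaneously $C_1(\delta+|\omega|)/R^2\leqslant 1/4$ and $C_1/R^2\leqslant \eps$ (this is possible because $\delta$, $\omega$ and $C_1$ are fixed). Absorbing the $|u|_{\L^2}^2$ term into the left-hand side and multiplying by $1/2$ gives
$$\mathcal{M}(u) = \tfrac{1}{2}|u|_{\L^2}^2 \leqslant \eps\langle (-A+\omega)u,u\rangle + C|u|_{\L^p}^2,$$
with $C = \tfrac{2}{3}c_d R^{d(1-2/p)}$ depending only on $\eps,\omega,p$ (and the noise). The only subtle point is tracking how the noise-dependent constants $\delta$ and the bounds on $\rho,\rho^{-1}$ enter, but this only affects the hidden constants and not the functional form of the bound; the argument is otherwise a direct adaptation of Kavian's interpolation argument.
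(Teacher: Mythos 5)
Your argument is essentially the paper's: both proofs bound $\int_{|x|>R}|u|^2\,\d x$ by $R^{-2}\int_{\R^d}|x|^2|u|^2\,\d x$, control that weighted norm by the quadratic form via quasi-coercivity (\cref{Prop-Quasi-coercivity-a-1d,Prop-Quasi-coercivity-a-2d}), and apply H\"older on $\{|x|\leqslant R\}$ to produce the $\L^p$ term. The only structural difference is the treatment of general $\omega$: the paper first proves the estimate for $\omega=\delta$ with the coefficient equal to $\eps$ exactly, and then transfers to arbitrary $\omega$ by an exact absorption (choosing $\eta=\eps/(1+2\eps(\delta-\omega))$), whereas you fold the $\omega$-dependence into a single absorption step.

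That difference hides one small but real slip at the very end. After absorbing, you arrive at
$$\mathcal{M}(u)\leqslant \eps_0\,\langle(-A+\omega)u,u\rangle + C_0|u|_{\L^{p}}^2,\qquad \eps_0=\tfrac{2C_1}{3R^2}\leqslant\tfrac{2}{3}\eps,$$
and you conclude by replacing $\eps_0$ with the larger $\eps$. That replacement requires $\langle(-A+\omega)u,u\rangle\geqslant 0$, which fails when $\omega<-\mu_0$ --- and this is exactly the regime in which the lemma is used in the proof of \cref{Prop-FrequencyEnergyGS}, where $\lambda<0$ forces $\omega_{\phi_m}<-\mu_0$ and $\langle(-A+\omega_{\phi_m})\phi_m,\phi_m\rangle=\lambda\int|\phi_m|^{2\gamma+2}\,\d x<0$. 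The gap is one line to close: either choose $R$ so that the post-absorption coefficient equals $\eps$ on the nose (it depends continuously and monotonically on $R$), or observe that your inequality with a small $\eps_0$ yields $-\langle(-A+\omega)u,u\rangle\leqslant \eps_0^{-1}C_0|u|_{\L^{p}}^2$, so the deficit $(\eps-\eps_0)\langle(-A+\omega)u,u\rangle$ can be traded for a further $\L^p$ term. The paper's two-step reduction to $\omega=\delta$ is engineered precisely to avoid this bookkeeping.
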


\begin{proof}

    First, we can reduce to the case $\omega=\delta$ given by \cref{Prop-Quasi-coercivity-a-1d,Prop-Quasi-coercivity-a-2d}. In fact, if the result holds for $\delta$, it holds for any $\omega\geqslant\delta$. Assume $\omega<\delta$ and the result holds for $\delta$. Then, for $\eta>0$, there exists $C>0$ independent of $u$ and which may vary from line to line, such that
    \begin{align*}
        \forall u\in\D^{1,2},\; \mathcal{M}(u)&\leqslant \eta\langle (-A+\delta)u,u\rangle + C|u|_{\L^{p}}^2\\
        &\leqslant\eta\langle (-A+\omega)u,u\rangle + 2\eta(\delta-\omega)\mathcal{M}(u) + C|u|_{\L^{p}}^2,
    \end{align*}
    with $\delta-\omega>0$. Thus, for $\eta=\frac{\eps}{1+2\eps(\delta-\omega)}$, it holds
    $$\forall u\in\D^{1,2},\; \mathcal{M}(u)\leqslant \eps\langle (-A+\omega)u,u\rangle + C|u|_{\L^{p}}^2.$$
    Thus, we only need to prove the case $\omega=\delta$. Let $R>0$, then it holds
    \begin{align*}
        \int_{\R^d}|u(x)|^2\d x &\leqslant \frac{1}{R^2} \int_{|x|>R}|xu(x)|^2\d x + \int_{|x|\leqslant R}|u(x)|^2\d x\\
        &\leqslant\frac{1}{R^2} \int_{|x|>R}|xu(x)|^2\d x + C_d R^{d\frac{p-2}{p}}|u|_{\L^p}^2,
    \end{align*}
    by Hölder's inequality, where $C_d$ is the measure of the unit ball in $\R^d$. Now, \cref{Prop-Quasi-coercivity-a-1d,Prop-Quasi-coercivity-a-2d} imply
    $$\mathcal{M}(u) \leqslant \frac{1}{R^2} \langle (-A+\delta)u,u\rangle + \frac{C_d}{2} R^{d\frac{p-2}{p}}|u|_{\L^p}^2.$$
    Thus, for $\eps>0$ and $R=\eps^{-\frac{1}{2}}$, we get
    $$\mathcal{M}(u) \leqslant \eps \langle (-A+\delta)u,u\rangle + \frac{C_d}{2} R_{\eps}^{d\frac{p-2}{p}}|u|_{\L^p}^2,$$
    which gives the claim for $\omega=\delta$.
    
\end{proof}

\begin{proof}[Proof of \cref{Prop-FrequencyEnergyGS}.]

    First, we prove inequalities for $E(m)$. Taking $u=\sqrt{2m}\varphi_0$, optimality gives
    \begin{equation}
        E(m)\leqslant \mu_0m - \frac{\lambda (2m)^{\gamma+1}}{2\gamma+2}\int_{\R^d}|\varphi_0|^{2\gamma+2}\d x.\label{Eq-UpBoundE(m)}
    \end{equation}
    Now, as $\mu_0$ is the smallest eigenvalue of $-A$, it holds
    \begin{equation}
        E(m)\geqslant \mu_0 m - \frac{\lambda}{2\gamma+2}\int_{\R^d}|\phi_m|^{2\gamma+2}\d x,\label{Eq-LowBoundE(m)}
    \end{equation}
    thus \cref{Eq-Omega(phi),Eq-UpBoundE(m),Eq-LowBoundE(m)} imply
    \begin{equation}\label{Eq-L2gamma+2PhimPhi0}
        \frac{\lambda}{2\gamma+2}\int_{\R^d}|\phi_m|^{2\gamma+2}\d x \geqslant \frac{\lambda (2m)^{\gamma+1}}{2\gamma+2}\int_{\R^d}|\varphi_0|^{2\gamma+2}\d x
    \end{equation}
    and
    \begin{equation}
        (2m)^\gamma\lambda\int_{\R^d}|\varphi_0|^{2\gamma+2}\d x \leqslant  \omega_{\phi_m}+\mu_0 \leqslant \frac{\lambda}{2m}\int_{\R^d}|\phi_m|^{2\gamma+2}\d x.\label{Eq-BoundOmegaPhi}
    \end{equation}
    We deduce immediately $\omega_{\phi_m}+\mu_0$ has the same sign as $\lambda$. By \cref{Lem-ControlD12Energy}, there exists $C,\alpha>0$ such that
    $$|\phi_m|_{\D^{1,2}}^2\leqslant C\left(E(m)+ m + \lambda_+ m^{1+\alpha}\right),$$
    thus \cref{Eq-UpBoundE(m)} implies $|\phi_m|_{\D^{1,2}}^2=O(m)$ as $m$ goes to $0$. From the continuous embedding of $\D^{1,2}$ in $\L^{2\gamma+2}(\R^d)$ and \cref{Eq-BoundOmegaPhi}, we deduce $\omega_{\phi_m}+\mu_0=O(m^\gamma)$ as $m$ goes to $0$, for any $\lambda\in\R$. Moreover, for $\lambda>0$, it follows directly from \cref{Eq-BoundOmegaPhi} that $\omega_{\phi_m}$ goes to $+\infty$ as $m$ goes to infinity. Finally, for $\lambda< 0$, using \cref{Lem-InterMassEnergNonlin}, it holds 
    $$m\leqslant \langle (-A+\omega_{\phi_m})\phi_m,\phi_m\rangle + C|\phi_m|_{\L^{2\gamma+2}}^2.$$
    As $\phi_m$ solves \cref{Eq-StationaryAGP}, it holds
    $$\langle (-A+\omega_{\phi_m})\phi_m,\phi_m\rangle = \lambda \int_{\R^d}|\phi_m(x)|^{2\gamma+2}\d x<0.$$
    Hence,
    $$|\phi_m|_{\L^{2\gamma+2}}^2\gtrsim m$$
    and thus, by \cref{Eq-L2gamma+2PhimPhi0},
    \begin{equation}\label{Eq-BoundPhim2gamma+2}
        \frac{1}{m}\int_{\R^d}|\phi_m|^{2\gamma+2}\d x \approx m^\gamma.
    \end{equation}
    In particular, \cref{Eq-BoundOmegaPhi} implies $\omega_{\phi_m}$ goes to $-\infty$ as $m$ goes to infinity.
 
\end{proof}

Using the spectral gap of $-A$, one can deduce the small mass asymptotic for energy ground-states.

\begin{cor}\label[cor]{Cor-SmallMassEnergyGS}
    Assume $\lambda<0$ or $\gamma<\frac{2}{d}$. For any $m>0$, let $\phi_m$ be a non-negative ground-state in $\displaystyle\argmin_{u\in\D^{1,2},\;\mathcal{M}(u)=m}\mathcal{E}(u)$ and define $\psi_m = (2m)^{-\frac{1}{2}}\phi_m$. Then, $\psi_m$ converges to $\varphi_0$ in $\mathrm{D}(A)$ as $m$ goes to $0$.
\end{cor}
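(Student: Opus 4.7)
The plan is to pass to the limit in the rescaled equation
\begin{equation*}
-A\psi_m + \omega_{\phi_m}\psi_m = \lambda (2m)^\gamma |\psi_m|^{2\gamma}\psi_m,
\end{equation*}
obtained by dividing \cref{Eq-StationaryAGP} for $\phi_m$ by $\sqrt{2m}$. By construction, $|\psi_m|_{\L^2}=1$ and $\psi_m\geqslant 0$. Moreover, the bound $|\phi_m|_{\D^{1,2}}^2=O(m)$ derived in the proof of \cref{Prop-FrequencyEnergyGS} shows that $(\psi_m)_{m>0}$ is bounded in $\D^{1,2}$, and the same proposition yields $\omega_{\phi_m}\to-\mu_0$ as $m\to 0$.

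I would first extract a sequence $m_k\to 0$ such that $\psi_{m_k}$ converges weakly in $\D^{1,2}$ and strongly in $\L^2\cap\L^{2\gamma+2}$ to some $\psi_\infty\geqslant 0$ with $|\psi_\infty|_{\L^2}=1$; the compactness of the embeddings $\D^{1,2}\hookrightarrow\L^p$ for $p\in\{2,2\gamma+2\}$ follows from \cref{Sobolev-embeddings} together with the boundedness of $\rho^{\pm 1}$. Next, the right-hand side $\lambda(2m_k)^\gamma |\psi_{m_k}|^{2\gamma}\psi_{m_k}$ tends to $0$ in $\L^2$: indeed $|\psi_m|^{2\gamma}\psi_m$ is bounded in $\L^2$ because $\D^{1,2}$ embeds continuously into $\L^{4\gamma+2}$ (in $d=1$ via $\W^{1,2}\hookrightarrow\L^\infty$, in $d=2$ via $\W^{1,2}\hookrightarrow\L^p$ for any finite $p$), while $(2m_k)^\gamma\to 0$. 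Passing to the limit in $\D^{-1,2}$ in the rescaled equation yields $-A\psi_\infty=\mu_0\psi_\infty$; since $\psi_\infty\geqslant 0$ is non-trivial, \cref{Prop-SpectralGapA1d,Prop-SpectralGapA2d} (simplicity of the lowest eigenvalue with positive eigenfunction) force $\psi_\infty=\varphi_0$. Because the limit $\varphi_0$ does not depend on the extracted subsequence, a standard argument upgrades this to convergence of the full net $m\to 0$.

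To get convergence in $\mathrm{D}(A)$ (equipped with the graph norm), I would then read the rescaled equation as
\begin{equation*}
A\psi_m = \omega_{\phi_m}\psi_m - \lambda (2m)^\gamma |\psi_m|^{2\gamma}\psi_m.
\end{equation*}
The first term converges to $-\mu_0\varphi_0$ in $\L^2$ by the strong $\L^2$-convergence of $\psi_m$ and $\omega_{\phi_m}\to-\mu_0$, while the second tends to $0$ in $\L^2$ by the same $\L^{4\gamma+2}$ bound used above. Hence $A\psi_m\to A\varphi_0$ in $\L^2$, which together with $\psi_m\to\varphi_0$ in $\L^2$ is precisely convergence in $\mathrm{D}(A)$.

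The main delicate point is the $\L^2$-control of the nonlinear term, which relies crucially on the absence of a critical exponent for the Sobolev embedding of $\D^{1,2}$ in dimensions $d\in\{1,2\}$ — in particular in $d=2$ one uses that $\D^{1,2}\hookrightarrow\L^p$ holds for every finite $p$, so that no restriction on $\gamma$ enters. The remaining steps are standard compactness and spectral arguments.
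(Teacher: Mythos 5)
Your proposal is correct and follows essentially the same route as the paper's proof: boundedness of $\psi_m$ in $\D^{1,2}$ from the energy bound, compact extraction, identification of the limit as $\varphi_0$ via the spectral gap together with $|\psi_\infty|_{\L^2}=1$ and $\psi_\infty\geqslant0$, subsequence-independence, and finally the graph-norm convergence read off from the equation using the $\L^{4\gamma+2}$ embedding to kill the nonlinear term. No meaningful differences to report.
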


\begin{proof}
    Remark $|\psi_m|_{\L^2}=1$ and, by \cref{Lem-ControlD12Energy,Eq-UpBoundE(m)}, $|\psi_m|_{\D^{1,2}}\lesssim 1+m^\alpha$ for some $\alpha>0$. Thus, there exists a sequence $(m_n)_{n\in\N}$ going to 0 and a  $\psi\in\D^{1,2}$ such that $\psi_{m_n}$ converges to $\psi$, weakly in $\D^{1,2}$ and strongly in $\L^2(\R^d)\cap\L^{2\gamma+2}(\R^d)$. Thus, $-A\psi_{m_n}$ converges weakly to $-A\psi$ in $\D^{-1,2}$ and, by \cref{Prop-FrequencyEnergyGS}, $\omega_{\psi_{m_n}}\psi_{m_n}$ converges strongly to $\mu_0\psi$ in $\L^2(\R^d)$. As $\psi_n$ solves
    $$-A\psi_n-\lambda(2m_n)^\gamma|\psi_n|^{2\gamma}\psi_n+\omega_{\psi_{n}}\psi_n=0,$$
    and $A$ is closed, it implies $-A\psi=\mu_0\psi$. Thus, $\psi=\alpha\varphi_0$ ($\alpha\in\C$), by \cref{Prop-SpectralGapA1d,Prop-SpectralGapA2d}. As $|\psi_{m_n}|_{\L^2}=1$, $|\alpha|=1$ and as $\psi_{m_n}\geqslant 0$ and $\varphi_0$ is continuous and positive, $\alpha=1$. Thus, $\psi_m$ converges to $\varphi_0$, weakly in $\D^{1,2}$ and strongly in $\L^2(\R^d)\cap\L^{2\gamma+2}(\R^d)$ as $m$ goes to 0. Finally,
    $$|A\psi_{m}-A\varphi_0|_{\L^2} \leqslant |\lambda|(2m)^{\gamma}|\psi_m|_{\L^{4\gamma+2}}^{2\gamma+1} + |\omega_{\phi_m}+\mu_0| + |\mu_0| |\psi_{m}-\varphi_0|_{\L^2},$$
    implying $\psi_m$ converges strongly to $\varphi_0$ in $\mathrm{D}(A)$.
    
\end{proof}

As a direct consequence of \cref{Cor-SmallMassEnergyGS}, one can refine the asymptotic behavior of $\omega_{\phi_m}$ for the small masses obtained in \cref{Prop-FrequencyEnergyGS} and show
\begin{equation*}
    \lambda (2m)^\gamma \int_{\R^d} |\varphi_0|^{2\gamma+2}\d x \leqslant \omega_{\phi_m} + \mu_0 \leqslant \lambda (2m)^\gamma \int_{\R^d} |\varphi_0|^{2\gamma+2}\d x + o\left(m^\gamma\right)
\end{equation*}
as $m$ goes to 0, which improves \cref{Eq-BoundOmegaPhi} in the limit of small masses.\\

In the defocusing case, energy ground-states cover the whole range of possible non-negative solutions, given by \cref{Prop-APrioriConditionExistence}.

\begin{cor}\label[cor]{Cor-C0curveEnergyGS}

    Assume $\lambda<0$. For any $m>0$, let $\phi_m\in\displaystyle\argmin_{u\in\D^{1,2},\;\mathcal{M}(u)=m}\mathcal{E}(u)$ be the non-negative ground-state and denote by $\omega_m$ its frequency. Then, the map $$m\in(0,+\infty)\mapsto(\omega_m,\phi_m)\in\R\times \mathrm{D}(A)$$ is continuous. In particular, $m\in(0,+\infty)\mapsto \omega_m\in(-\infty,-\mu_0)$ is onto.
    
\end{cor}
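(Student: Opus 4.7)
The plan has two parts: first, establish continuity of $m\mapsto \phi_m$ in $\D^{1,2}$ together with continuity of $\omega_m$, then upgrade to continuity in $\mathrm{D}(A)$; second, combine this continuity with the asymptotic behavior of $\omega_m$ from \cref{Prop-FrequencyEnergyGS} to deduce surjectivity via the intermediate value theorem.

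For the $\D^{1,2}$-continuity, I would fix $m>0$ and a sequence $m_n\to m$. By \cref{Lem-ControlD12Energy} and the continuity of $E$ from \cref{Lem-ContinuityE(m)}, the family $(\phi_{m_n})$ is bounded in $\D^{1,2}$. The key step is the rescaling $\psi_n=\sqrt{m/m_n}\,\phi_{m_n}\geqslant 0$, which has mass $m$. A direct computation expressing $\mathcal{E}(\psi_n)$ in terms of $\langle -A\phi_{m_n},\phi_{m_n}\rangle$ and $\int_{\R^d}|\phi_{m_n}|^{2\gamma+2}\d x$ (both of which are bounded) shows that $\mathcal{E}(\psi_n)-E(m_n)=O(|m/m_n-1|)$. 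Combined with $E(m_n)\to E(m)$, this yields $\mathcal{E}(\psi_n)\to E(m)$, so that $(\psi_n)$ is a non-negative minimizing sequence for $E(m)$. The uniqueness part of \cref{Prop-ExistenceEnergyGS} then gives $\psi_n\to \phi_m$ in $\D^{1,2}$, hence $\phi_{m_n}\to \phi_m$ in $\D^{1,2}$.

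With this convergence, \cref{Sobolev-embeddings} provides convergence in $\L^p(\R^d)$ for every $p\in[2,+\infty)$, and formula \cref{Eq-Omega(phi)} then yields $\omega_{m_n}\to \omega_m$. To upgrade to $\mathrm{D}(A)$, I would exploit the equation $-A\phi_{m_n}=\lambda|\phi_{m_n}|^{2\gamma}\phi_{m_n}-\omega_{m_n}\phi_{m_n}$: the pointwise bound $\bigl||a|^{2\gamma}a-|b|^{2\gamma}b\bigr|\lesssim(|a|^{2\gamma}+|b|^{2\gamma})|a-b|$ together with Hölder's inequality shows $|\phi_{m_n}|^{2\gamma}\phi_{m_n}\to|\phi_m|^{2\gamma}\phi_m$ in $\L^2(\R^d)$, and $\omega_{m_n}\phi_{m_n}\to \omega_m\phi_m$ in $\L^2(\R^d)$ is immediate. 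Thus $A\phi_{m_n}\to A\phi_m$ in $\L^2(\R^d)$, i.e.\ convergence holds in the graph norm of $A$, which is the norm of $\mathrm{D}(A)$.

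Surjectivity then follows from \cref{Prop-FrequencyEnergyGS}, which in the defocusing case ($\lambda<0$) gives $\omega_m<-\mu_0$, $\omega_m+\mu_0=O(m^\gamma)\to 0$ as $m\to 0^+$, and $\omega_m\to-\infty$ as $m\to+\infty$. The continuity of $m\mapsto\omega_m$ combined with the intermediate value theorem then provides surjectivity onto $(-\infty,-\mu_0)$. The main obstacle is the first step: verifying that the rescaled sequence $(\psi_n)$ genuinely minimizes the energy at mass $m$, which relies crucially on the a priori $\D^{1,2}$-bound from \cref{Lem-ControlD12Energy} and the continuity of $E$ to control the discrepancy between $\mathcal{E}(\psi_n)$ and $E(m_n)$.
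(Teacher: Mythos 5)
Your proposal is correct and follows essentially the same route as the paper: the rescaling $\sqrt{m/m_n}\,\phi_{m_n}$ to produce a non-negative minimizing sequence at mass $m$, convergence via the uniqueness/compactness statement of \cref{Prop-ExistenceEnergyGS}, continuity of $\omega_m$ from \cref{Eq-Omega(phi)}, the graph-norm upgrade through the equation, and surjectivity from \cref{Prop-FrequencyEnergyGS} with the intermediate value theorem. Your justification that the rescaled sequence is minimizing (via the $O(|m/m_n-1|)$ estimate) is in fact slightly more explicit than the paper's, which defers this to the proof of \cref{Lem-ContinuityE(m)}.
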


\begin{proof}

    Let $m>0$ and $(m_n)_{n\in\N}$ converging to $m$. The proof of \cref{Lem-ContinuityE(m)} implies $\left(\sqrt{\frac{m_n}{m}}\phi_{m_n}\right)_{n\in\N}$ is a minimizing sequence for $E(m)$. By \cref{Prop-ExistenceEnergyGS}, we deduce $\sqrt{\frac{m_n}{m}}\phi_{m_n}$ converges strongly to $\phi_m$ in $\D^{1,2}$, thus implying $m\mapsto \phi_m\in\D^{1,2}$ is continuous. Hence, the map $m\mapsto \omega_m$ is continuous by \cref{Sobolev-embeddings,Eq-Omega(phi),Lem-ContinuityE(m)}. Then, \cref{Prop-FrequencyEnergyGS} and continuity imply the map $m\in(0,+\infty)\mapsto \omega_m\in(-\infty,-\mu_0)$ is onto. Finally, it holds
    $$|A\phi_m-A\phi_{m'}|_{\L^2} \leqslant |\lambda| \left||\phi_m|^{2\gamma}\phi_{m}-|\phi_{m'}|^{2\gamma}\phi_{m'}\right|_{\L^2} + m'|\omega_m-\omega_{m'}|+|\omega_m||\phi_m-\phi_{m'}|_{\L^2}$$
    and the right hand side goes to 0 as $m'$ goes to $m$, as $\D^{1,2}$ is continuously embedded in $\L^2(\R^d)\cap\L^{4\gamma+2}(\R^d).$
    
\end{proof}

\subsection{The (super-)critical focusing case}

We conclude with the unboundeness of energy in the (super-)critical case. For this we introduce a scaling adapted to the exponential transform. For $\alpha>0$ and $u=\rho v\in\D^{1,2}$, we define
$$\forall x\in\R^d,\; u_\alpha(x) = \alpha^{d/2}\rho(x)v(\alpha x).$$
This scaling preserves the space $\D^{1,2}$ but not the mass. Nevertheless, for $u=\rho v\in\D^{1,2}$, it holds for $q\in[2,+\infty)$
$$\lim_{\alpha\to+\infty}\int_{\R^d}|u_\alpha|^q\d x =\rho(0)^q\int_{\R^d}|v|^q\d x.$$
We will thus take advantage of the following lemmas to obtain the unboundedness of the constrained energy.

\begin{lemme}\label[lemme]{Lem-AsympEnergy}
    Let $(u_n)_{n\in\N}\subset\D^{1,2}$. Assume $\mathcal{M}(u_n)$ converges to $m$ and $|u_n|_{\L^{2\gamma+2}}$ is bounded as $n$ goes to infinity. Then 
    $$\mathcal{E}\left(u_n\right)\sim\mathcal{E}\left(\sqrt{\frac{m}{\mathcal{M}(u_n)}}u_n\right)$$
    as $n$ goes to infinity.
\end{lemme}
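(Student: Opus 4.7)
The idea is purely algebraic: set $\alpha_n = \sqrt{m/\mathcal{M}(u_n)}$, which is well-defined for $n$ large and tends to $1$ since $\mathcal{M}(u_n)\to m>0$. Note that $\alpha_n u_n$ has mass exactly $m$. Using the quadratic nature of $\langle -A\cdot,\cdot\rangle$ and the homogeneity of the nonlinear term, a one-line computation gives
$$\mathcal{E}(\alpha_n u_n) \;=\; \frac{\alpha_n^2}{2}\,\langle -A u_n, u_n\rangle \;-\; \frac{\lambda\,\alpha_n^{2\gamma+2}}{2\gamma+2}\int_{\R^d}|u_n|^{2\gamma+2}\,\d x \;=\; \alpha_n^2\,\mathcal{E}(u_n)\;+\;\frac{\lambda\,(\alpha_n^2-\alpha_n^{2\gamma+2})}{2\gamma+2}\int_{\R^d}|u_n|^{2\gamma+2}\,\d x.$$
Since $\alpha_n\to 1$, one has $\alpha_n^2-\alpha_n^{2\gamma+2}\to 0$, and the hypothesis that $|u_n|_{\L^{2\gamma+2}}$ is bounded forces the integral to stay bounded. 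Hence the correction term is $o(1)$ as $n\to\infty$, and combined with $\alpha_n^2\to 1$ we arrive at $\mathcal{E}(\alpha_n u_n)=\mathcal{E}(u_n)+(\alpha_n^2-1)\mathcal{E}(u_n)+o(1)$.

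The asymptotic equivalence now reads off immediately. In the regime $|\mathcal{E}(u_n)|\to\infty$---which is the only one of real interest in the sequel, where the lemma is used to transfer blow-up of the energy along a sequence of approximate mass $m$ to a sequence of exact mass $m$---dividing by $\mathcal{E}(u_n)$ yields $\mathcal{E}(\alpha_n u_n)/\mathcal{E}(u_n)\to 1$, which is the claimed $\sim$ relation. In the complementary bounded regime, the same identity yields directly $\mathcal{E}(\alpha_n u_n)-\mathcal{E}(u_n)\to 0$.

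There is no substantial obstacle in the proof itself; the content of the lemma is essentially the observation that the quadratic ($\alpha^2$) and nonlinear ($\alpha^{2\gamma+2}$) scalings of the energy differ only by $O(\alpha-1)$ times a quantity that the boundedness hypothesis pins down. The assumption on $|u_n|_{\L^{2\gamma+2}}$ is the crucial ingredient, since without it the scaling mismatch could compete with the size of $\mathcal{E}(u_n)$; it will be verified in the application via the concentration scaling $u_\alpha(x)=\alpha^{d/2}\rho(x)v(\alpha x)$ described just above the statement, together with the bound $\rho\in\L^\infty$ and the continuous embedding $\D^{1,2}\hookrightarrow\L^{2\gamma+2}$.
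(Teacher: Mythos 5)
Your proposal is correct and follows essentially the same route as the paper: the same algebraic expansion of $\mathcal{E}(\alpha_n u_n)$ into $\alpha_n^2\mathcal{E}(u_n)$ plus a correction proportional to $(\alpha_n^2-\alpha_n^{2\gamma+2})\int|u_n|^{2\gamma+2}\,\d x$, killed by $\alpha_n\to1$ and the assumed $\L^{2\gamma+2}$ bound. Your explicit case discussion of the regime where $|\mathcal{E}(u_n)|\to\infty$ (the one actually used in \cref{Prop-UnboundenessEnergy}) is if anything slightly more careful than the paper's one-line ``$\sim$''.
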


\begin{proof}
    Let $m_n=\mathcal{M}(u_n)$, it holds
    \begin{align*}
        \mathcal{E}\left(\sqrt{\frac{m}{m_n}}u_n\right) &= \left(\frac{m}{m_n}\right)\left(\frac{1}{2}\langle -Au_n,u_n\rangle - \left(\frac{m}{m_n}\right)^\gamma \frac{\lambda}{2\gamma+2}\int_{\R^d}|u_n|^{2\gamma+2}\d x\right)\\
        &= \left(\frac{m}{m_n}\right)\left(\mathcal{E}(u_n)+ \left(1 - \left(\frac{m}{m_n}\right)^\gamma\right) \frac{\lambda}{2\gamma+2}\int_{\R^d}|u_n|^{2\gamma+2}\d x\right)\\
        &\sim \mathcal{E}\left(u_n\right)
    \end{align*}
    as $n$ goes to infinity.
    
\end{proof}

\begin{lemme}\label[lemme]{Lem-EstimScalTransf}
    For any $s\in[0,1)$, there exists $C=C(s,\Xi)>0$ such that
    $$\forall \alpha>1,\; \forall u\in\W^{s,2},\; |u_\alpha|_{\W^{s,2}}\leqslant C \alpha^s |u|_{\W^{s,2}}.$$
    Moreover, there exists $C=C(\Xi)>0$ such that
    $$\forall \alpha>1,\; \forall u\in\D^{1,2},\; |u_\alpha|_{\D^{1,2}}\leqslant C \alpha |u|_{\D^{1,2}}.$$
\end{lemme}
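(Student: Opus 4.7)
The plan is to treat the two statements independently: the first concerns the usual parabolic scaling $T_\alpha : v \mapsto v_\alpha := \alpha^{d/2} v(\alpha \cdot)$ acting on Hermite-Sobolev spaces, while the second translates this bound to $\D^{1,2}$ via the exponential transform and the explicit formula (\ref{Eq-D12nom}).

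For the first statement, I would treat the endpoints $s=0$ and $s=1$ separately and interpolate. The $L^2$-isometry $|v_\alpha|_{\L^2_x} = |v|_{\L^2_x}$ is immediate by change of variables, giving the $s=0$ bound with constant $1$. For $s=1$, I would use the equivalent norm $|v|_{\W^{1,2}} \approx |v|_{\mathrm{H}^{1,2}_x} + |\langle x\rangle v|_{\L^2_x}$ recalled in \cref{Sec-ConfiningSobolev}. Direct scaling yields $|\nabla v_\alpha|_{\L^2_x} = \alpha\,|\nabla v|_{\L^2_x}$, and the change of variables $y=\alpha x$ together with $\alpha>1$ gives
$$|\langle x\rangle v_\alpha|_{\L^2_x}^2 = \int_{\R^d}\langle y/\alpha\rangle^{2}|v(y)|^2\d y \leqslant |\langle x\rangle v|_{\L^2_x}^2,$$
so $|v_\alpha|_{\W^{1,2}} \lesssim \alpha\,|v|_{\W^{1,2}}$. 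Complex interpolation of the linear operator $T_\alpha$ between $\W^{0,2}$ and $\W^{1,2}$ then yields the claimed $\alpha^s$ factor for every $s\in[0,1)$.

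For the second statement, the explicit formula (\ref{Eq-D12nom}) gives
$$|u_\alpha|_{\D^{1,2}}^2 = \int_{\R^d}\left(|\nabla v_\alpha|^2 + |x v_\alpha|^2\right)\rho^2(x)\d x.$$
Using that $\rho = \e^Y$ is almost surely bounded from above and below (since $Y\in\mathrm{H}^{1-,\infty}$, see \cref{Cor-RegWN}), the change of variables $y=\alpha x$ gives
$$\int_{\R^d}|\nabla v_\alpha|^2\rho^2\d x = \alpha^2\int_{\R^d}|\nabla v(y)|^2\rho^2(y/\alpha)\d y \lesssim_\Xi \alpha^2\,|\nabla v|_{\L^2_x}^2,$$
$$\int_{\R^d}|x v_\alpha|^2\rho^2\d x = \alpha^{-2}\int_{\R^d}|y v(y)|^2\rho^2(y/\alpha)\d y \lesssim_\Xi \alpha^{-2}\,|\langle x\rangle v|_{\L^2_x}^2.$$
Since $\alpha>1$, summing these yields $|u_\alpha|_{\D^{1,2}}^2 \lesssim_\Xi \alpha^2\,|v|_{\W^{1,2}}^2$, and the norm equivalence (\ref{Eq-NormEquivD12}) converts the right-hand side into $\alpha^2\,|u|_{\D^{1,2}}^2$, as desired.

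No step is a genuine obstacle: the proof relies only on the change of variables formula, the a.s.\ two-sided boundedness of $\rho$, and complex interpolation on the Hermite-Sobolev scale, all of which have been established in the preliminaries. The mild restriction $s<1$ in the first bound is harmless for the intended applications in \cref{Lem-AsympEnergy} and the sequel.
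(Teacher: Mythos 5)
Your argument is essentially the paper's: change of variables plus the two-sided a.s.\ bounds on $\rho$ for the $\D^{1,2}$ estimate, and endpoint bounds followed by complex interpolation for the fractional one; the second part is correct as written. The one point to tighten is in the first part: you bound the plain scaling $v\mapsto v_\alpha=\alpha^{d/2}v(\alpha\cdot)$ on $\W^{s,2}$, but the object in the statement is $u_\alpha=\rho\,v_\alpha$ with $v=\rho^{-1}u$, so you still need multiplication by $\rho^{\pm1}$ to be bounded on $\W^{s,2}$ to pass from $|v_\alpha|_{\W^{s,2}}\lesssim\alpha^s|v|_{\W^{s,2}}$ to the claimed bound on $u_\alpha$ in terms of $u$. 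This is supplied by \cref{Lem-ProductRule} together with $\rho^{\pm1}\in\H^{1-,\infty}$, and it is precisely what forces the restriction $s<1$ (which you dismiss as incidental) and what makes the constant depend on $\Xi$ already at $s=0$, where it is $\sup\rho/\inf\rho$ rather than $1$. The paper sidesteps this by interpolating the full map $u\mapsto u_\alpha$ directly between $\L^2$ and $\D^{1,2}$ and invoking $\D^{s,2}=\W^{s,2}$ for $s\in[0,1)$; either route is fine once the role of $\rho$ is made explicit.
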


\begin{proof}

    Let $u=\rho v\in\L^2(\R^d)$, then
    $$\int_{\R^d}|u_\alpha|^2\d x = \int_{\R^d}|v(x)|^2 \rho(x/\alpha)^2\d x \lesssim \int_{\R^d}|u|^2\d x.$$
    Now, for $u=\rho v\in\D^{1,2}$, 
    \begin{align*}
        |u_\alpha|_{\D^{1,2}}^2&\lesssim \alpha^d\int_{\R^d}|\nabla [v(\alpha x)]|^2+|xv(\alpha x)|^2\d x \\
        &\lesssim \alpha^2 \int_{\R^d}|\nabla v(x)|^2 \d x + \alpha^{-2} \int_{\R^d}|xv(x)|^2\d x \lesssim \alpha^2 |u|_{\D^{1,2}}^2,
    \end{align*}
    The result for $s\in(0,1)$ follows by interpolation as \cref{Lem-ProductRule,Def-Dsq} imply $\D^{s,2}=\W^{s,2}$ for $s\in[0,1)$.
    
\end{proof}

\begin{prop}\label[prop]{Prop-UnboundenessEnergy}
    Assume $\lambda>0$. 
    \begin{itemize}
        \item If $\gamma>\frac{2}{d}$, it holds
        $$\forall m>0,\; \inf_{\Atop{u\in\D^{1,2}}{\mathcal{M}(u)=m}}\mathcal{E}(u) = -\infty.$$
        \item If $\gamma=\frac{2}{d}$, there exists a critical mass $m^*_\lambda(\Xi)>0$ such that 
        $$\forall m<m^*_\lambda(\Xi),\; \inf_{\Atop{u\in\D^{1,2}}{\mathcal{M}(u)=m}}\mathcal{E}(u) > -\infty $$
        and
        $$\forall m>m^*_\lambda(\Xi),\; \inf_{\Atop{u\in\D^{1,2}}{\mathcal{M}(u)=m}}\mathcal{E}(u) = -\infty.$$
        Moreover, $m^*_\lambda(\Xi)=m^*_\lambda$ in dimension 1 and 
        $$\exp\left(\inf Y - \sup Y\right)^4 m^*_\lambda\leqslant m^*_\lambda(\Xi) \leqslant m^*_\lambda$$
        in dimension 2, where $m^*_\lambda$ is the critical mass for the classical NLS equation.
    \end{itemize}
\end{prop}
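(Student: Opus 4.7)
My plan is to use the scaling $u_\alpha(x) = \alpha^{d/2}\rho(x)v(\alpha x) = \rho(x)v_\alpha(x)$ with $v_\alpha(x) := \alpha^{d/2}v(\alpha x)$, introduced just above, and to track the asymptotic behaviour of each term in the energy formula \cref{Eq-EnergyVside} as $\alpha\to+\infty$. For any non-zero $v\in\mathcal{C}^\infty_c(\R^d)$, the change of variables $y=\alpha x$ yields
\[
\int|\rho\nabla v_\alpha|^2\,\d x = \alpha^2\rho(0)^2\bigl(|\nabla v|_{\L^2}^2+o(1)\bigr),\quad \int\rho^2|xv_\alpha|^2\,\d x = O(\alpha^{-2}),\quad \int|\rho v_\alpha|^{2\gamma+2}\,\d x = \alpha^{d\gamma}\rho(0)^{2\gamma+2}\bigl(|v|_{\L^{2\gamma+2}}^{2\gamma+2}+o(1)\bigr),
\]
while in dimension 2 the renormalised noise term $\langle Z,\rho^2|v_\alpha|^2\rangle$ is $o(\alpha^2)$ by \cref{Lem-ProductRule,Cor-ProductRuleNegPosReg} applied with $Z\in\W^{-s,q}$ for $s>2/q$ arbitrarily small (in dimension 1, $Z=|Y'|^2\in\L^\infty$ and this term is $O(1)$). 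In parallel $\mathcal{M}(u_\alpha)\to\rho(0)^2|v|_{\L^2}^2/2$ with error $O(\alpha^{-\beta})$ for some $\beta>0$, using the Hölder regularity of $\rho=\e^Y$ (recall $Y\in\W^{2-d/2-,\infty}$).

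For $\gamma>2/d$ the nonlinear term dominates, so $\mathcal{E}(u_\alpha)\sim-C\alpha^{d\gamma}\to-\infty$. Since $|u_\alpha|_{\L^{2\gamma+2}}$ blows up, \cref{Lem-AsympEnergy} cannot be invoked directly; instead I choose $v$ with $\rho(0)^2|v|_{\L^2}^2=2m$ and rescale by $\mu_\alpha:=\sqrt{m/\mathcal{M}(u_\alpha)}=1+O(\alpha^{-\beta})$. Expanding $\mathcal{E}(\mu_\alpha u_\alpha)-\mathcal{E}(u_\alpha)$ in $\mu_\alpha^2-1$ and $\mu_\alpha^{2\gamma+2}-1$ against $\langle-Au_\alpha,u_\alpha\rangle=O(\alpha^2)$ and $\int|u_\alpha|^{2\gamma+2}\d x=O(\alpha^{d\gamma})$ shows $\mathcal{E}(\mu_\alpha u_\alpha)=\mathcal{E}(u_\alpha)+O(\alpha^{d\gamma-\beta})\to-\infty$ along a sequence of exact mass $m$.

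In the critical case $\gamma=2/d$, kinetic and nonlinear terms share the scaling $\alpha^2$, giving
\[
\mathcal{E}(u_\alpha) = \frac{\alpha^2\rho(0)^2}{2}\left(|\nabla v|_{\L^2}^2 - \frac{\lambda\rho(0)^{2\gamma}}{\gamma+1}|v|_{\L^{2\gamma+2}}^{2\gamma+2}\right)+o(\alpha^2).
\]
For the upper bound $m^*_\lambda(\Xi)\leqslant m^*_\lambda$ when $m>m^*_\lambda$, take $v=cQ$ with $Q$ the classical ground-state soliton (satisfying the Pohozaev identity $\tfrac{\lambda}{\gamma+1}|Q|_{\L^{2\gamma+2}}^{2\gamma+2}=|\nabla Q|_{\L^2}^2$ and normalised by $|Q|_{\L^2}^2=2m^*_\lambda$) and $c=\rho(0)^{-1}\sqrt{m/m^*_\lambda}$; the bracket becomes $c^2|\nabla Q|_{\L^2}^2(1-(m/m^*_\lambda)^\gamma)<0$, and the $\mu_\alpha$-rescaling of the previous paragraph concludes. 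For the lower bound in dimension 2, the classical Gagliardo-Nirenberg inequality applied to $v$, combined with $|u|_{\L^p}\leqslant|\rho|_\infty|v|_{\L^p}$, $|\nabla v|_{\L^2}^2\leqslant|\rho^{-1}|_\infty^2|u|_{\D^{1,2}}^2$ and $|v|_{\L^2}\leqslant|\rho^{-1}|_\infty|u|_{\L^2}$, yields
\[
|u|_{\L^{2\gamma+2}}^{2\gamma+2}\leqslant C_{GN}\e^{(2\gamma+2)(\sup Y-\inf Y)}(2m)^\gamma|u|_{\D^{1,2}}^2;
\]
coupling this with a sharpened form of \cref{Prop-Quasi-coercivity-a-2d} (the constant $\tfrac{1}{2}$ can be made arbitrarily close to $1$ by tuning the Young parameter in its proof) forces $\mathcal{E}$ bounded below whenever $m<\e^{-(2\gamma+2)(\sup Y-\inf Y)/\gamma}m^*_\lambda=\e^{4(\inf Y-\sup Y)}m^*_\lambda$, since $\gamma=1$. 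In dimension 1 the lower bound is sharper: $Y\in\mathrm{H}^{1,\infty}$ gives $\nabla u=Y'u+\rho v'\in\L^2(\R)$ for any $u=\rho v\in\D^{1,2}$, so classical Gagliardo-Nirenberg applies directly to $u$ without $\rho$-factor; using the decomposition $\langle-Au,u\rangle=|u'|_{\L^2}^2+|xu|_{\L^2}^2-\langle\xi,|u|^2\rangle$ from \cref{Eq-QuadFormA1d} and absorbing $\langle\xi,|u|^2\rangle$ via Young exactly as in the proof of \cref{Prop-Quasi-coercivity-a-1d} gives $\mathcal{E}$ bounded below for all $m<m^*_\lambda$, hence $m^*_\lambda(\Xi)=m^*_\lambda$.

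The main obstacle is the critical case: producing a test sequence of \emph{exact} mass $m$ along which $\mathcal{E}\to-\infty$, since the nonlinear contribution is unbounded and \cref{Lem-AsympEnergy} fails. This forces the quantitative estimate $\mu_\alpha-1=O(\alpha^{-\beta})$ via the modulus of continuity of $\rho$, and also explains the dimensional asymmetry in the lower bound: in dimension 1 the $\L^2$-regularity of $\nabla u$ permits the optimal Gagliardo-Nirenberg inequality to be applied to $u$ itself, recovering $m^*_\lambda$ sharply; in dimension 2 one must route through $v=\rho^{-1}u$ and pay the multiplicative price $\e^{(2\gamma+2)(\sup Y-\inf Y)}$ in the effective Gagliardo-Nirenberg constant.
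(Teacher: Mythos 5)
Your argument follows essentially the same route as the paper (same scaling $u_\alpha$, same test functions $c\rho Q$, same sharpened quasi-coercivity plus Gagliardo--Nirenberg for the lower bounds, and the same final constants in both dimensions), and your observation about \cref{Lem-AsympEnergy} is well taken: the paper invokes that lemma for sequences with $|u_\alpha|_{\L^{2\gamma+2}}\to+\infty$, so its stated hypothesis is not met, and your quantitative substitute --- $\mathcal{M}(u_\alpha)=m+O(\alpha^{-\beta})$ via the H\"older continuity of $\rho$, so that the mass-renormalisation error is $O(\alpha^{d\gamma-\beta})=o(|\mathcal{E}(u_\alpha)|)$ --- is a correct and indeed cleaner way to close that step. (Minor imprecision: the $o(\alpha^2)$ bound on $\langle Z,|u_\alpha|^2\rangle$ needs the scaling estimate \cref{Lem-EstimScalTransf}, not just the product rules you cite, but the ingredient is available in the paper.)

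There is, however, one genuine gap in the critical case in dimension $2$: you never establish that a single threshold $m^*_\lambda(\Xi)$ exists. Your argument shows the constrained infimum is finite for $m<\e^{4(\inf Y-\sup Y)}m^*_\lambda$ and equals $-\infty$ for $m>m^*_\lambda$, but these two bounds do not meet, and nothing you prove excludes the infimum alternating between finite and $-\infty$ on the intermediate range $\bigl[\e^{4(\inf Y-\sup Y)}m^*_\lambda,\,m^*_\lambda\bigr]$. The statement asserts a sharp dichotomy, so you need the monotonicity of the set $\{m>0:\ \inf_{\mathcal{M}(u)=m}\mathcal{E}(u)=-\infty\}$. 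The paper obtains it by comparing masses: for $0<m<m'$ and $\mathcal{M}(\phi)=m$, setting $\psi=\sqrt{m'/m}\,\phi$ one gets, using $\lambda>0$ and $(m/m')^\gamma<1$,
$$\mathcal{E}(\phi)\;=\;\frac{m}{m'}\,\mathcal{E}(\psi)+\frac{m\lambda}{m'(2\gamma+2)}\Bigl[1-\bigl(\tfrac{m}{m'}\bigr)^{\gamma}\Bigr]\int_{\R^d}|\phi|^{2\gamma+2}\,\d x\;\geqslant\;\frac{m}{m'}\,\mathcal{E}(\psi),$$
hence $m'\inf_{\mathcal{M}(u)=m}\mathcal{E}(u)\geqslant m\inf_{\mathcal{M}(u)=m'}\mathcal{E}(u)$, so unboundedness at mass $m$ propagates to every $m'>m$ and $m^*_\lambda(\Xi)$ is well defined as the infimum of the unbounded masses. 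In dimension $1$ your two bounds coincide so the issue does not arise there, but in dimension $2$ this step is indispensable and must be added.
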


In order to prove \cref{Prop-UnboundenessEnergy} in the case $\gamma=\frac{2}{d}$, we recall the optimal Gagliardo-Nirenberg constant,
\begin{equation}\label{Eq-OptimalGagliardoNirenbergCte}
    J = \inf_{v\in \mathrm{H}^1(\R^d)\backslash\{0\}}\frac{|\nabla v|_{\L^2}^2|v|_{\L^2}^{2\gamma}}{|v|^{2\gamma+2}_{\L^{2\gamma+2}}},
\end{equation}
which is attained by the unique positive and radially symmetric solution $Q$ of
$$\Delta Q + \lambda Q^{2\gamma+1} = Q.$$
It is well-known $Q\in\W^{1,2}$ (see Theorem 8.1.1 in \cite{cazenave2003semilinear}) and Pohozaev identities (see Proposition 1 in \cite{BerestyckiLions}) imply
\begin{equation}\label{Eq-PohozaevQ}
    \frac{1}{2}\int_{\R^d}|\nabla Q(x)|^2\d x = \frac{\lambda}{2\gamma+2}\int_{\R^d}|Q(x)|^{2\gamma+2}\d x.
\end{equation}
Moreover, $\mathcal{M}(Q)=m_\lambda^*=\frac{1}{2}\left(\frac{(\gamma+1)J}{\lambda}\right)^{\frac{1}{\gamma}}$, by \cref{Eq-OptimalGagliardoNirenbergCte,Eq-PohozaevQ}, is the critical mass for the classical NLS equation. Likewise, we introduce the noisy Gagliardo-Nirenberg constant,
\begin{equation}\label{Eq-OptimalNoisyGagliardoNirenbergCte}
    J_\Xi = \inf_{u\in \D^{1,2}\backslash\{0\}}\frac{|u|_{\D^{1,2}}^2|u|_{\L^2}^{2\gamma}}{|u|^{2\gamma+2}_{\L^{2\gamma+2}}}.
\end{equation}

\begin{proof}[Proof of \cref{Prop-UnboundenessEnergy}.]

    \begin{itemize}
        \item Assume $\gamma>\frac{2}{d}$, let $v\in\W^{1,2}\backslash\{0\}$ and $u=\rho v$, then for any $\alpha>0$, \cref{Eq-EnergyVside} implies
        \begin{align*}
            \mathcal{E}(u_\alpha) &= - \frac{\lambda\alpha^{d\gamma}}{2\gamma+2}\int_{\R^d}|v(x)|^{2\gamma+2}\rho(x/\alpha)^{2\gamma+2}\d x &(I)\\
            &+ \frac{\alpha^2}{2}\int_{\R^d}\rho(x/\alpha)^2|\nabla v(x)|^2\d x &(II)\\
            &+ \frac{1}{2\alpha^2}\int_{\R^d} (1-Y(x/\alpha))\rho(x/\alpha)^2|xv(x)|^2\d x &(III)\\
            &-\frac{1}{2}\langle Z,|u_\alpha|^2\rangle. &(IV)
        \end{align*}
        \cref{Lem-EstimScalTransf,Lem-ProductRule} imply $(IV)=O(\alpha^s)$ as $\alpha$ goes to infinity, for any $s\in(0,2)$, as $Z\in\W^{0-,\infty}$. By dominated convergence, $(III)=O(\alpha^{-2})$ as $\alpha$ goes to infinity. Finally, $(II)=O(\alpha^2)$ and
        $$(I)\sim - \frac{\lambda\alpha^{d\gamma}\rho(0)^{2\gamma+2}}{2\gamma+2}\int_{\R^d}|v(x)|^{2\gamma+2}\d x.$$
        Chose $v = \sqrt{2m}\rho(0)^{-1}h_0$ ($m>0$), then $\mathcal{M}(u_\alpha)$ converges to $m$ whereas $\mathcal{E}(u_\alpha)$ goes to $-\infty$, because $d\gamma>2$. Thus, \cref{Lem-AsympEnergy} implies
        $$\forall m>0,\; \inf_{\Atop{u\in\D^{1,2}}{\mathcal{M}(u)=m}}\mathcal{E}(u)=-\infty.$$
        \item Assume $\gamma=\frac{2}{d}$ and let $Q$ be the unique positive and radial solution of 
        $$\Delta Q + \lambda Q^{2\gamma+1} = Q.$$
        Recall $\mathcal{M}(Q)=m_\lambda^*=\frac{1}{2}\left(\frac{(\gamma+1)J}{\lambda}\right)^{\frac{1}{\gamma}}$. For $c>0$, let $u=c\rho Q$. For any $\alpha>0$, it holds
        \begin{align*}
            \mathcal{E}(u_\alpha) &= \frac{\alpha^2}{2}\int_{\R^d}\rho(x/\alpha)^2|c\nabla Q(x)|^2\d x - \frac{\lambda\alpha^{2}}{2\gamma+2}\int_{\R^d}|cQ(x)|^{2\gamma+2}\rho(x/\alpha)^{2\gamma+2}\d x &(I)\\
            &+ \frac{c^2}{2\alpha^2}\int_{\R^d} (1-Y(x/\alpha))\rho(x/\alpha)^2|xQ(x)|^2\d x -\frac{1}{2}\langle Z,|u_\alpha|^2\rangle. &(II)
        \end{align*}
        As previously $(II)=O(\alpha^s)$ as $\alpha$ goes to infinity, for any $s\in(0,2)$, and for $c\rho(0)\neq 1$,
        $$(I)\sim\frac{\alpha^2\lambda(c\rho(0))^2}{2\gamma+2}\left(1-(c\rho(0))^{2\gamma}\right)\int_{\R^d}|Q(x)|^{2\gamma+2}\d x$$
        as $\alpha$ goes to infinity, using \cref{Eq-PohozaevQ}. Thus, for $c\rho(0)>1$, \cref{Lem-AsympEnergy} implies
        $$\inf_{\Atop{u\in\D^{1,2}}{\mathcal{M}(u)=c\rho(0)m^*_\lambda}}\mathcal{E}(u)=-\infty,$$
        that is
        $$\forall m>m^*_\lambda,\; \inf_{\Atop{u\in\D^{1,2}}{\mathcal{M}(u)=m}}\mathcal{E}(u)=-\infty.$$
        Let $m'>m>0$ and $\phi\in\D^{1,2}$ such that $\mathcal{M}(\phi)=m$. Let $\psi=\sqrt{\frac{m'}{m}}\phi$. Then,
        \begin{align*}
            \mathcal{E}(\phi) &= \frac{m}{m'}\left[\frac{1}{2}\langle -A\psi,\psi\rangle - \frac{\lambda}{2\gamma+2}\left(\frac{m}{m'}\right)^{\gamma}\int_{\R^d}|\psi|^{2\gamma+2}\d x\right]\\
            &=\frac{m}{m'}\mathcal{E}(\psi) + \frac{m\lambda}{m'(2\gamma+2)}\left[1 - \left(\frac{m}{m'}\right)^{\gamma}\right]\int_{\R^d}|\phi|^{2\gamma+2}\d x\\
            &\geqslant \frac{m}{m'}\mathcal{E}(\psi).
        \end{align*}
        Thus,
        $$m'\inf_{\Atop{u\in\D^{1,2}}{\mathcal{M}(u)=m}}\mathcal{E}(u) \geqslant m \inf_{\Atop{u\in\D^{1,2}}{\mathcal{M}(u)=m}'}\mathcal{E}(u).$$
        In particular, if $\displaystyle\inf_{u\in\D^{1,2},\; \mathcal{M}(u)=m}\mathcal{E}(u)=-\infty$, then $\displaystyle\inf_{u\in\D^{1,2},\; \mathcal{M}(u)=m'}\mathcal{E}(u)=-\infty$. Similarly, if $\displaystyle\inf_{u\in\D^{1,2},\; \mathcal{M}(u)=m'}\mathcal{E}(u)>-\infty$, then $\displaystyle\inf_{u\in\D^{1,2},\; \mathcal{M}(u)=m}\mathcal{E}(u)>-\infty$. Hence, we can define
        \begin{align*}
            m^*_\lambda(\Xi) &= \inf\left\{m>0,\; \inf_{\Atop{u\in\D^{1,2}}{\mathcal{M}(u)=m}}\mathcal{E}(u)=-\infty\right\}\\
            &= \sup\left\{m>0,\; \inf_{\Atop{u\in\D^{1,2}}{\mathcal{M}(u)=m}}\mathcal{E}(u)>-\infty\right\} \in [0,m^*_\lambda]
        \end{align*}
        To conclude, we only have to prove a lower bound for $m^*_\lambda(\Xi)$. 
        
        Assume $d=1$. By a slight modification of the proof of \cref{Prop-Quasi-coercivity-a-1d}, for every $\eps\in(0,1)$, there exist $\delta_\eps$ such that
        $$\forall u\in\W^{1,2},\; \langle (-A+\delta_\eps)u,u\rangle \geqslant (1-\eps)|u|_{\W^{1,2}}^2.$$
        Thus, for $u\in\D^{1,2}=\W^{1,2}$, it holds 
        $$(1-\eps)|u|_{\W^{1,2}}^2\leqslant 2\left(\mathcal{E}(u)+\delta_\eps \mathcal{M}(u) + \frac{\lambda}{6}\int_{\R^d}|u|^{6}\d x\right)$$
        and
        \cref{Eq-OptimalGagliardoNirenbergCte} implies
        $$\int_{\R}|u|^{6}\d x \leqslant J^{-1}|u|_{\W^{1,2}}^2|u|_{\L^2}^{4}.$$
        Thus, for $m<\frac{1}{2}\left(\frac{3J}{\lambda}\right)^{1/2}=m^*_\lambda$, there exists $C=C(\Xi,m,\lambda)>0$ such that for any $u\in\W^{1,2}$ with $\mathcal{M}(u)=m$,
        $$|u|_{\W^{1,2}}^2\leqslant C\left(1+ \mathcal{E}(u)\right),$$
        showing
        $$\inf_{\Atop{u\in\W^{1,2}}{\mathcal{M}(u)=m}}\mathcal{E}(u)>-\infty$$
        and $m^*_\lambda(\Xi)=m^*_\lambda$.

        Finally, for $d=2$, by a slight modification of the proof of \cref{Prop-Quasi-coercivity-a-2d}, for every $\eps\in(0,1)$, there exist $\delta_\eps$ such that
        $$\forall u\in\D^{1,2},\; \langle (-A+\delta_\eps)u,u\rangle \geqslant (1-\eps)|u|_{\D^{1,2}}^2.$$ 
        Thus, for $u\in\D^{1,2}$, it holds
        $$(1-\eps)|u|_{\D^{1,2}}^2\leqslant 2\left(\mathcal{E}(u)+\delta_\eps \mathcal{M}(u) + \frac{\lambda}{4}\int_{\R^2}|u|^{4}\d x\right).$$
        Now,\cref{Eq-OptimalNoisyGagliardoNirenbergCte} implies
        $$\int_{\R^2}|u|^{4}\d x \leqslant J_\Xi^{-1}|u|_{\D^{1,2}}^2|u|_{\L^2}^{2}.$$
        Thus, for $$m<\frac{J_\Xi}{\lambda},$$ 
        there exists $C=C(\Xi,m,\lambda)>0$ such that for any $u\in\D^{1,2}$ with $\mathcal{M}(u)=m$,
        $$|u|_{\D^{1,2}}^2\leqslant C\left(1+ \mathcal{E}(u)\right),$$
        showing
        $$\inf_{\Atop{u\in\D^{1,2}}{\mathcal{M}(u)=m}}\mathcal{E}(u)>-\infty$$
        and thus $m^*_\lambda(\Xi)\geqslant \frac{J_\Xi}{\lambda}$. For $u=\rho v\in\D^{1,2}$, \cref{Eq-D12nom} implies
        $$\frac{|u|_{\D^{1,2}}^2|u|_{\L^2}^{2}}{|u|^{4}_{\L^{4}}} \geqslant \frac{|\rho\nabla v|_{\L^2}^2|\rho v|_{\L^2}^{2}}{|\rho v|^{4}_{\L^{4}}}\geqslant \left(\frac{\inf \rho}{\sup \rho}\right)^{4}J=\exp(\inf Y - \sup Y)^{4} J,$$
        as $\rho=\e^Y$ is positive. Thus, 
        $$m^*_\lambda(\Xi)\geqslant \frac{J_\Xi}{\lambda}\geqslant\exp(\inf Y-\sup Y)^4\frac{J}{\lambda}=\exp(\inf Y-\sup Y)^4 m^*_\lambda.$$
    \end{itemize}
\end{proof}

\begin{rem}{~}
    \begin{itemize}
        \item If $d=1$ and $m<m_\lambda^*$ or $d=2$ and $m<\frac{J_\Xi}{\lambda}$, the norm in $\D^{1,2}$ is controlled by the mass and the energy, thus a proof by compactness similar to the one of \cref{Prop-ExistenceEnergyGS} shows there exists an energy ground-state.
        \item In dimension 2, we do not know if $m^*_\lambda(\Xi)=\frac{J_\Xi}{\lambda}$ holds. We strongly believe that the lower bound obtained on $m^*_\lambda(\Xi)$ is not optimal.
        \item The case $d=1$, $\gamma=2$ and $m=m^*_\lambda$ is not known yet, but we believe that
        $$\inf_{\Atop{u\in\W^{1,2}}{\mathcal{M}(u)=m^*_\lambda}}\mathcal{E}(u)$$
        has no minimizer, as it is the case when $\Xi=0$ (i.e.\ in presence of a confining potential only). It would be interesting to see if the method used in Theorem 1 in \cite{GuoSeringerMassConcentration} to deal with the case $d=2$, $\gamma=1$ and $\Xi=0$ can be adapted in that case.
    \end{itemize}
\end{rem}

\section{Action ground-states}\label{Sec-ActionGS}

As expected, energy minimization cannot produce standing waves in the focusing supercritical setting. Another classical way to obtain standing waves is to search for solutions of minimal action. The action functional is defined as
$$\forall u\in\D^{1,2},\; \mathcal{S}_{\omega}(u) = \mathcal{E}(u) + \omega\mathcal{M}(u).$$
When needed, we shall write $\mathcal{S}_{\omega}=\mathcal{S}_{\omega,\lambda}$ to emphasize the value of $\lambda$. From the definition of $\mathcal{S}_{\omega}$, we see standing waves are exactly the non-zero critical points of the action. Assume $\lambda>0$ and $\omega>-\mu_0$. For $\alpha>0$, we have
$$\mathcal{S}_{\omega,\lambda}(\alpha\varphi_0) = \frac{\alpha^2}{2}(\omega+\mu_0) - \frac{\lambda\alpha^{2\gamma+2}}{2\gamma+2}\int_{\R^d}|\varphi_0|^{2\gamma+2}\d x \xrightarrow[\alpha\to+\infty]{}-\infty.$$
Hence we cannot minimize the action globally in the focusing case. It is well-known in this case, that one should minimize the action on the set of its critical points \cite{BerestyckiLions,FukuizumiStabilityInstability,LiuRotatingActionGS}, but in order to do so, one should know a priori existence of such a critical point. To avoid needing this a priori knowledge, one can work on a larger constrained set. If $u$ is a critical point of $\mathcal{S}_{\omega,\lambda}$, then it solves \cref{Eq-StationaryAGP} and thus
$$\mathcal{I}_{\omega,\lambda}(u) = \langle (-A+\omega)u,u\rangle - \lambda\int_{\R^d}|u|^{2\gamma+2}\d x = 0.$$
Thus, we can try to minimize $\mathcal{S}_{\omega,\lambda}$ on $$\mathcal{N}_{\omega,\lambda}=\left\{u\in\D^{1,2}\backslash\{0\},\; \mathcal{I}_{\omega,\lambda}(u) = 0\right\}.$$
Assume $u\in\displaystyle\argmin_{\mathcal{N}_{\omega,\lambda}}\mathcal{S}_{\omega,\lambda}$, then there exists a Lagrange multiplier $\Lambda\in\R$ such that
$$\nabla\mathcal{S}_{\omega,\lambda} u = \Lambda \nabla\mathcal{I}_{\omega,\lambda} u.$$
Taking the bracket against $u$ and using $\mathcal{I}_{\omega,\lambda}(u) = \langle\nabla\mathcal{S}_{\omega,\lambda} u,u \rangle$, it follows
$$\Lambda\langle\nabla\mathcal{I}_{\omega,\lambda} u,u \rangle = \langle\nabla\mathcal{S}_{\omega,\lambda} u,u \rangle = \mathcal{I}_{\omega,\lambda}(u)=0$$
as $u\in\mathcal{N}_{\omega,\lambda}$. As 
$$\nabla\mathcal{I}_{\omega,\lambda} u = 2(-A+\omega)u-(2\gamma+2)\lambda|u|^{2\gamma}u,$$
we get
$$2\gamma\Lambda\lambda \int_{\R^d}|u|^{2\gamma+2}\d x =0,$$
thus $\Lambda=0$, i.e.\ $u$ is a critical point of $\mathcal{S}_{\omega,\lambda}$. Moreover, as any critical point of $\mathcal{S}_{\omega,\lambda}$ is in $\mathcal{N}_{\omega,\lambda}$, $u$ minimizes the action among all critical points.\\

The set $\mathcal{N}_{\omega,\lambda}$ is called the Nehari manifold of $\mathcal{S}_{\omega,\lambda}$ (see for example \cite{SzulkinNehariManifold}) and verifies the following useful characterization.

\begin{lemme}\label[lemme]{Lem-CaracNehariManifold}
    Let $\omega>-\mu_0$ and $\lambda>0$. Then for any $u\in\D^{1,2}\backslash\{0\}$, there exists a unique $t_u>0$ such that $t_u u \in \mathcal{N}_{\omega,\lambda}$ and it is the unique maximizer of
    $$t\in(0,+\infty)\mapsto \mathcal{S}_{\omega,\lambda}(t u)\in\R.$$
\end{lemme}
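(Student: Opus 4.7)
The plan is to reduce the statement to a one–variable calculus problem by restricting the action to the ray $\{tu : t>0\}$. Fix $u \in \D^{1,2}\setminus\{0\}$ and set
\[
a = \langle(-A+\omega)u,u\rangle_{\D^{-1,2},\D^{1,2}}, \qquad b = \lambda\int_{\R^d}|u|^{2\gamma+2}\d x,
\]
so that for every $t>0$,
\[
f(t) := \mathcal{S}_{\omega,\lambda}(tu) = \frac{a}{2}t^{2} - \frac{b}{2\gamma+2}t^{2\gamma+2}.
\]

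The first step is to verify $a,b>0$. Since $\mu_0$ is the lowest eigenvalue of $-A$, the spectral theorem gives $\langle -Au,u\rangle \geq \mu_0|u|_{\L^2}^2$ for all $u\in\D^{1,2}$, hence $a \geq (\omega+\mu_0)|u|_{\L^2}^2 > 0$ because $\omega>-\mu_0$ and $u\neq 0$. Positivity of $b$ follows from $\lambda>0$ together with the Sobolev embedding $\D^{1,2}\hookrightarrow\L^{2\gamma+2}(\R^d)$ (via \cref{Sobolev-embeddings} and the norm equivalence \cref{Eq-NormEquivD12}), which yields $\int|u|^{2\gamma+2}\d x>0$.

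The second step is to compute the derivative of $f$ and identify its unique critical point:
\[
f'(t) = at - bt^{2\gamma+1} = t\bigl(a - bt^{2\gamma}\bigr),
\]
so on $(0,+\infty)$ the equation $f'(t)=0$ has the single root
\[
t_u = \left(\frac{a}{b}\right)^{\!\tfrac{1}{2\gamma}}.
\]
Since $f(0)=0$, $f'(t)>0$ on $(0,t_u)$ and $f'(t)<0$ on $(t_u,+\infty)$ (with $f(t)\to -\infty$ as $t\to+\infty$ because $2\gamma+2>2$ and $b>0$), the point $t_u$ is the unique global maximizer of $f$ on $(0,+\infty)$.

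Finally, the connection with the Nehari manifold comes from the identity
\[
\mathcal{I}_{\omega,\lambda}(tu) = \langle \nabla \mathcal{S}_{\omega,\lambda}(tu),tu\rangle = t^{2}a - t^{2\gamma+2}b = t\,f'(t),
\]
which implies that for $t>0$ one has $tu\in\mathcal{N}_{\omega,\lambda}$ if and only if $f'(t)=0$, that is, if and only if $t=t_u$. This simultaneously gives existence, uniqueness of $t_u$ and the maximization property. There is no genuine obstacle here; the only point requiring care is the strict positivity of $a$, which relies on the spectral description of $\mu_0$ supplied by \cref{Prop-SpectralGapA1d,Prop-SpectralGapA2d}.
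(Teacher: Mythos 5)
Your proof is correct and follows essentially the same route as the paper: restrict the action to the ray $\{tu:t>0\}$, solve $\mathcal{I}_{\omega,\lambda}(tu)=0$ explicitly for $t_u$, and use the sign of the derivative of $t\mapsto\mathcal{S}_{\omega,\lambda}(tu)$ to identify $t_u$ as the unique maximizer. Your explicit verification that $a>0$ (via $\omega>-\mu_0$ and the spectral bound) and $b>0$ is a point the paper leaves implicit, but it is needed for the formula for $t_u$ to make sense, so including it is a small improvement rather than a deviation.
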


\begin{proof}
    Let $u\in\D^{1,2}\backslash\{0\}$, for any $t>0$, it holds
    $$\mathcal{I}_{\omega,\lambda}(tu)=t^2\langle (-A+\omega)u,u\rangle - \lambda t^{2\gamma+2}\int_{\R^d}|u|^{2\gamma+2}\d x.$$
    Thus,
    \begin{align*}
        t_u u\in\mathcal{N}_{\omega,\lambda} &\iff \mathcal{I}_{\omega,\lambda}(t_u u) = 0\\
        &\iff \lambda t_u^{2\gamma}\int_{\R^d}|u|^{2\gamma+2}\d x=\langle (-A+\omega)u,u\rangle\\
        &\iff t_u = \left(\frac{\langle (-A+\omega)u,u\rangle}{\lambda \int_{\R^d}|u|^{2\gamma+2}\d x}\right)^{\frac{1}{2\gamma}}.
    \end{align*}
    A simple computation gives
    $$\frac{\d}{\d t}\mathcal{S}_{\omega,\lambda}(t u) = t\langle (-A+\omega)u,u\rangle - \lambda t^{2\gamma+1}\int_{\R^d}|u|^{2\gamma+2}\d x$$
    and
    $$\frac{\d^2}{\d t^2}\mathcal{S}_{\omega,\lambda}(t u) = \langle (-A+\omega)u,u\rangle - (2\gamma+1) \lambda t^{2\gamma}\int_{\R^d}|u|^{2\gamma+2}\d x.$$
    Thus, it follows immediately that $\frac{\d}{\d t}\mathcal{S}_{\omega,\lambda}(t u) = 0$ if and only if $t=t_u$ and in this case
    $$\left(\frac{\d^2}{\d t^2}\mathcal{S}_{\omega,\lambda}(t u)\right)_{|t=t_u} = -2\gamma \langle (-A+\omega)u,u\rangle < 0.$$
    
\end{proof}

Our aim is now to prove \cref{Th-ACtionGS}. Showing directly that the minimization problem
\begin{equation}
    \inf_{\mathcal{N}_{\omega,\lambda}}\mathcal{S}_{\omega,\lambda}\label{Eq-MinAction}
\end{equation}
has non-negative solutions may be a bit technical as it is not straightforward that
$$u\in\argmin_{\mathcal{N}_{\omega,\lambda}}\mathcal{S}_{\omega,\lambda}\Rightarrow  |u|\in\argmin_{\mathcal{N}_{\omega,\lambda}}\mathcal{S}_{\omega,\lambda}.$$
Thus, we will introduce two other, but equivalent, minimization problems which are easier to study. Remark if $u$ solves \cref{Eq-StationaryAGP} for some $\lambda>0$, then $\alpha u$ solves \cref{Eq-StationaryAGP} for $\lambda'=\frac{\lambda}{|\alpha|^{2\gamma}}$. Thus, the parameter $\lambda$ can be seen as a Lagrange multiplier, and solving \cref{Eq-StationaryAGP} for some $\lambda>0$ is equivalent to solving \cref{Eq-StationaryAGP} for all $\lambda>0$. Following an idea of \cite{ROSE1988207}, we define for $u\in\D^{1,2}$
\begin{equation}
    \mathcal{P}_\omega(u)=\langle (-A+\omega) u, u\rangle,\label{Eq-POmega}
\end{equation}
\begin{equation}
    \mathcal{Q}(u) = \int_\R |u|^{2(\gamma+1)}\d x\label{Eq-Q(u)}
\end{equation}
and if $u\neq0,$
\begin{equation}
    \mathcal{J}_\omega(u) = \frac{\mathcal{P}_\omega(u)}{\mathcal{Q}(u)^{1/(\gamma+1)}}.\label{Eq-JOmega}
\end{equation}
Let us stress out $\mathcal{J}_\omega(\alpha u)=\mathcal{J}_\omega(u)$ for any $\alpha\in\C^*$, thus if $u$ minimizes $\mathcal{J}_\omega$, $\alpha u$ minimizes $\mathcal{J}_\omega$ too. Using standard argument of variational calculus, one can show minimizers of both
\begin{equation}
    \inf_{\Atop{u\in\D^{1,2}}{\mathcal{Q}(u)=q}}\mathcal{P}_\omega(u)\label{Eq-MinP/Q=q}
\end{equation}
and
\begin{equation}
    \inf_{\D^{1,2}\backslash\{0\}}\mathcal{J}_\omega\label{Eq-MinJ}
\end{equation}
are solutions of \cref{Eq-StationaryAGP} for some $\lambda>0$. For $\lambda>0$, one can rewrite
$$\mathcal{S}_{\omega,\lambda}(u)=\frac{1}{2}\mathcal{P}_\omega(u)-\frac{\lambda}{2(\gamma+1)}\mathcal{Q}(u)$$
and
\begin{equation}
    \mathcal{N}_{\omega,\lambda} = \left\{u\in\D^{1,2}\backslash\{0\},\; \mathcal{J}_\omega(u) = \lambda \mathcal{Q}(u)^{\gamma/(\gamma+1)} \right\}.\label{Eq-NehariJQ}
\end{equation}
As remarked in \cite{ROSE1988207}, minimizers of \cref{Eq-MinP/Q=q,Eq-MinJ} are essentially the same, that is up to a multiplicative factor, due to the the homogeneity of $\mathcal{J}_\omega$. The following proposition shows minimization problems \cref{Eq-MinAction,Eq-MinP/Q=q,Eq-MinJ} are equivalent, up to a multiplicative factor (see also \cite{LiuRotatingActionGS}).

\begin{prop}\label[prop]{Prop-EquivalentMinPbFoca}

    Let $\omega>-\mu_0$ and $q,q',\lambda,\lambda'>0$. Define $\alpha_1=(q'/q)^{1/2(\gamma+1)}$ and $\alpha_2=(\lambda/\lambda')^{1/2\gamma}$. Then, it holds
    \begin{enumerate}
        \item $\displaystyle\argmin_{\Atop{u\in\D^{1,2}}{\mathcal{Q}(u)=q}'}\mathcal{P}_\omega(u) = \alpha_1 \argmin_{\Atop{u\in\D^{1,2}}{\mathcal{Q}(u)=q}}\mathcal{P}_\omega(u)$,
        \item $\displaystyle\argmin_{\mathcal{N}_{\omega,\lambda'}}\mathcal{S}_{\omega,\lambda'} = \alpha_2\argmin_{\mathcal{N}_{\omega,\lambda}}\mathcal{S}_{\omega,\lambda}$,
        \item $\displaystyle\argmin_{\D^{1,2}\backslash\{0\}}\mathcal{J}_{\omega}=\bigcup_{q>0}\argmin_{\Atop{u\in\D^{1,2}}{\mathcal{Q}(u)=q}}\mathcal{P}_\omega(u)=\bigcup_{\lambda>0}\argmin_{\mathcal{N}_{\omega,\lambda}}\mathcal{S}_{\omega,\lambda}$,
        \item Let $J_\omega = \displaystyle\min_{\D^{1,2}\backslash\{0\}}\mathcal{J}_{\omega}>0$, then$\displaystyle\argmin_{u\in\D^{1,2},\; \mathcal{Q}(u)=1}\mathcal{P}_\omega(u) = \argmin_{\mathcal{N}_{\omega,J_\omega}}\mathcal{S}_{\omega,J_\omega}.$
    \end{enumerate}
    
\end{prop}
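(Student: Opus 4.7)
The proof is essentially a bookkeeping of the homogeneity degrees of $\mathcal{P}_\omega$ (degree $2$) and $\mathcal{Q}$ (degree $2(\gamma+1)$). These two facts imply that $\mathcal{J}_\omega = \mathcal{P}_\omega/\mathcal{Q}^{1/(\gamma+1)}$ is scale-invariant, that the constraint $\mathcal{P}_\omega(u) = \lambda\mathcal{Q}(u)$ defining $\mathcal{N}_{\omega,\lambda}$ transforms $\lambda$ as a quantity of degree $2\gamma$ under dilations, and that on $\mathcal{N}_{\omega,\lambda}$ the action collapses to
$$\mathcal{S}_{\omega,\lambda}(u) = \frac{\gamma\lambda}{2(\gamma+1)}\mathcal{Q}(u), \qquad \mathcal{J}_\omega(u) = \lambda \mathcal{Q}(u)^{\gamma/(\gamma+1)},$$
both monotone functions of $\mathcal{Q}$. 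Every assertion of the proposition will follow from these observations.

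For (1), the map $u \mapsto \alpha_1 u$ is a bijection from $\{\mathcal{Q}=q\}$ onto $\{\mathcal{Q}=q'\}$ that multiplies $\mathcal{P}_\omega$ by $\alpha_1^2$, hence sends minimizers to minimizers. For (2), the analogous calculation shows that the Nehari constraint $\mathcal{P}_\omega(u) = \lambda \mathcal{Q}(u)$ transforms into $\mathcal{P}_\omega(\alpha u) = \lambda' \mathcal{Q}(\alpha u)$ precisely when $\alpha^{2\gamma} = \lambda/\lambda'$, which gives the value $\alpha_2$. Using the collapsed form of the action on the Nehari manifold, the induced bijection $\mathcal{N}_{\omega,\lambda} \to \mathcal{N}_{\omega,\lambda'}$ transports minimizers of $\mathcal{S}_{\omega,\lambda}$ to minimizers of $\mathcal{S}_{\omega,\lambda'}$.

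For (3), the main tool is \cref{Lem-CaracNehariManifold}: for any $v\in\D^{1,2}\backslash\{0\}$ there is a unique $t_v>0$ with $t_v v \in \mathcal{N}_{\omega,\lambda}$, and by scale invariance $\mathcal{J}_\omega(v) = \mathcal{J}_\omega(t_v v)$. Similarly, $v$ can be uniquely rescaled onto any prescribed level $\{\mathcal{Q} = q\}$ without changing $\mathcal{J}_\omega$. Comparing $\mathcal{J}_\omega$ of a candidate constrained minimizer to that of an arbitrary competitor via this rescaling yields both inclusions in (3). Part (4) is the specialization to $q=1$: if $u^*$ minimizes $\mathcal{P}_\omega$ under $\mathcal{Q}(u^*)=1$, then $\mathcal{P}_\omega(u^*) = J_\omega = J_\omega \cdot\mathcal{Q}(u^*)$, so $u^* \in \mathcal{N}_{\omega,J_\omega}$; conversely, every $v\in\mathcal{N}_{\omega,J_\omega}$ satisfies $\mathcal{J}_\omega(v) = J_\omega \mathcal{Q}(v)^{\gamma/(\gamma+1)} \geq J_\omega$, forcing $\mathcal{Q}(v) \geq 1 = \mathcal{Q}(u^*)$, hence $u^* \in \argmin_{\mathcal{N}_{\omega,J_\omega}} \mathcal{S}_{\omega,J_\omega}$. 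The reverse inclusion is symmetric.

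There is no serious analytic obstacle: the whole proposition is an organized use of the scaling $u \mapsto \alpha u$. The only point requiring attention is in (3)--(4), where a single minimizer of $\mathcal{J}_\omega$ corresponds to a whole orbit of minimizers for the two constrained problems, parametrized respectively by $q>0$ and $\lambda>0$, so one must keep track of which scaling constant links which pair; the correspondence between the two parametrizations is $\lambda = \mathcal{P}_\omega(u^*)/\mathcal{Q}(u^*)$, the scalar fixed by the Nehari condition applied to the minimizer itself.
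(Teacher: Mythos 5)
Your proof is correct and follows essentially the same route as the paper: every part is a bookkeeping of the homogeneities of $\mathcal{P}_\omega$ and $\mathcal{Q}$, together with the identities $\mathcal{S}_{\omega,\lambda}=\frac{\gamma\lambda}{2(\gamma+1)}\mathcal{Q}$ and $\mathcal{J}_\omega=\lambda\mathcal{Q}^{\gamma/(\gamma+1)}$ on $\mathcal{N}_{\omega,\lambda}$, exactly as in the paper (your derivation of $u^*\in\mathcal{N}_{\omega,J_\omega}$ in part (4) is in fact slightly cleaner, bypassing the Euler--Lagrange equation the paper invokes). The only place worth one more explicit line is the ``reverse inclusion'' in (4): it is not literally symmetric, but it does follow from the facts you already have, since the collapsed action forces any minimizer $v$ over $\mathcal{N}_{\omega,J_\omega}$ to satisfy $\mathcal{Q}(v)=1$ and hence $\mathcal{P}_\omega(v)=J_\omega=\min_{\mathcal{Q}=1}\mathcal{P}_\omega$.
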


\begin{proof}
    \begin{enumerate}
        \item Let $u\in\displaystyle\argmin_{u\in\D^{1,2},\; \mathcal{Q}(u)=q}\mathcal{P}_\omega(u)$ and $v\in\displaystyle\argmin_{u\in\D^{1,2},\; \mathcal{Q}(u)=q'}\mathcal{P}_\omega(u)$. Then, $\mathcal{Q}(\alpha_1 u) = q'$ and $\mathcal{Q}(\alpha_1^{-1} v) = q$. It holds
        $$\mathcal{P}_\omega(v)\leqslant \mathcal{P}_\omega(\alpha_1 u)= \alpha_1^2 \mathcal{P}_\omega(u) \leqslant \alpha_1^2 \mathcal{P}_\omega(\alpha_1^{-1}v)=\mathcal{P}_\omega(v).$$
        We conclude
        $$\argmin_{\Atop{u\in\D^{1,2}}{\mathcal{Q}(u)=q}'}\mathcal{P}_\omega(u) = \alpha_1 \argmin_{\Atop{u\in\D^{1,2}}{\mathcal{Q}(u)=q}}\mathcal{P}_\omega(u)$$
        by symmetry of $q$ and $q'$.
        \item Let $u\in\displaystyle\argmin_{\mathcal{N}_{\omega,\lambda}}\mathcal{S}_{\omega,\lambda}$ and $v\in\displaystyle\argmin_{\mathcal{N}_{\omega,\lambda'}}\mathcal{S}_{\omega,\lambda'}$. Then $\alpha_2 u\in \mathcal{N}_{\omega,\lambda'}$ et $\alpha_2^{-1}v\in\mathcal{N}_{\omega,\lambda}$. It holds
        $$\mathcal{S}_{\omega,\lambda'}(v)\leqslant \mathcal{S}_{\omega,\lambda'}(\alpha_2 u) = \frac{\alpha_2^2}{2} \mathcal{P}_\omega(u) - \frac{\lambda' \alpha_2^{2(\gamma+1)}}{2(\gamma+1)}\mathcal{Q}(u) = \alpha_2^2 \mathcal{S}_{\omega,\lambda}(u).$$
        By symmetry of $\lambda$ and $\lambda'$, it implies
        $$\mathcal{S}_{\omega,\lambda}(u) \leqslant \alpha_2^{-2} \mathcal{S}_{\omega,\lambda'}(v),$$
        which allows to conclude $\mathcal{S}_{\omega,\lambda}(\alpha_2 u)=\mathcal{S}_{\omega,\lambda}(v)$ and thus
        $$\argmin_{\mathcal{N}_{\omega,\lambda'}}\mathcal{S}_{\omega,\lambda'} = \alpha_2\argmin_{\mathcal{N}_{\omega,\lambda}}\mathcal{S}_{\omega,\lambda}$$
        by symmetry of $\lambda$ and $\lambda'$.
        \item Let $q>0$, $u\in\displaystyle\argmin_{u\in\D^{1,2},\; \mathcal{Q}(u)=q}\mathcal{P}_\omega(u)$ and $v\in\D^{1,2}\backslash\{0\}$. There exists $t>0$ such that $\mathcal{Q}(tv)=q$. As $u\in\displaystyle\argmin_{u\in\D^{1,2},\; \mathcal{Q}(u)=q}\mathcal{P}_\omega(u)$, it holds
        $$\mathcal{P}_\omega(u)\leqslant \mathcal{P}_\omega(tv)$$
        and thus
        $$\mathcal{J}_\omega(u)=\frac{\mathcal{P}_\omega(u)}{q^{1/(\gamma+1)}}\leqslant \frac{\mathcal{P}_\omega(tv)}{q^{1/(\gamma+1)}} = \mathcal{J}_\omega(tv)=\mathcal{J}_\omega(v).$$
        This shows 
        $$u\in\argmin_{\D^{1,2}\backslash\{0\}}\mathcal{J}_{\omega}.$$
        
        Conversely, let $u\in\displaystyle\argmin_{\D^{1,2}\backslash\{0\}}\mathcal{J}_{\omega}$ and $q=\mathcal{Q}(u)>0$. Let $v\in\displaystyle\argmin_{u\in\D^{1,2},\; \mathcal{Q}(u)=q}\mathcal{P}_\omega(u)$, it holds
        $$\frac{\mathcal{P}_\omega(u)}{q^{1/(\gamma+1)}} = \mathcal{J}_{\omega}(u) \leqslant \mathcal{J}_{\omega}(v) = \frac{\mathcal{P}_\omega(v)}{q^{1/(\gamma+1)}},$$
        and thus 
        $$u\in\argmin_{\Atop{u\in\D^{1,2}}{\mathcal{Q}(u)=q}}\mathcal{P}_\omega(u),$$
        that is 
        $$\argmin_{\D^{1,2}\backslash\{0\}}\mathcal{J}_{\omega}=\bigcup_{q>0}\argmin_{\Atop{u\in\D^{1,2}}{\mathcal{Q}(u)=q}}\mathcal{P}_\omega(u).$$
        The fact that
        $$\bigcup_{q>0}\argmin_{\Atop{u\in\D^{1,2}}{\mathcal{Q}(u)=q}}\mathcal{P}_\omega(u)=\bigcup_{\lambda>0}\argmin_{\mathcal{N}_{\omega,\lambda}}\mathcal{S}_{\omega,\lambda}$$
        follows from 1., 2. and 4.
        \item Let $v\in\displaystyle\argmin_{\mathcal{N}_{\omega,J_\omega}}\mathcal{S}_{\omega,J_\omega}$. As $v\in\mathcal{N}_{\omega,J_\omega}$, \cref{Eq-NehariJQ} implies $\mathcal{Q}(v)\geqslant 1$. Let $u\in\displaystyle\argmin_{u\in\D^{1,2},\; \mathcal{Q}(u)=1}\mathcal{P}_\omega(u)$. Then, $u$ belongs to $\mathcal{N}_{\omega,J_\omega}$. In fact, as $u$ solves \cref{Eq-StationaryAGP} for some $\lambda_\omega>0$, it holds
        $$\mathcal{P}_\omega(u)=\langle (-A+\omega)u,u\rangle = \lambda_\omega\int_{\R^d}|u|^{2\gamma+2}\d x = \lambda_\omega$$
        and as $\mathcal{Q}(u)=1$, $\lambda_\omega=\mathcal{P}_\omega(u)=\mathcal{J}_\omega(u)=J_\omega$ by 3. Thus,
        $$\mathcal{S}_{\omega,J_\omega}(v)\leqslant \mathcal{S}_{\omega,J_\omega}(u) = \frac{\gamma}{2\gamma+2}J_\omega.$$
        Hence,
        $$\mathcal{J}_\omega(v)\leqslant \frac{\gamma}{\gamma+1}J_\omega+\frac{J_\omega\mathcal{Q}(v)^{\gamma/(\gamma+1)}}{\gamma+1},$$
        as $\mathcal{Q}(v)\geqslant 1$. Using $v\in\mathcal{N}_{\omega,J_\omega}$, it holds $J_\omega\mathcal{Q}(v)^{\gamma/(\gamma+1)}=\mathcal{J}_\omega(v)$. Hence,
        $$J_\omega\leqslant\mathcal{J}_\omega(v)\leqslant \frac{\gamma J_\omega}{\gamma+1}+\frac{\mathcal{J}_\omega(v)}{\gamma+1}.$$
        It follows $\mathcal{J}_\omega(v)=J_\omega$. Using once again $v$ belongs to the Nehari manifold, we get $\mathcal{Q}(v)=1$. Hence, $\mathcal{Q}(u)=\mathcal{Q}(v)$ and $\mathcal{P}_\omega(u)=\mathcal{P}_\omega(v)$, which allows to conclude.
    \end{enumerate}
    
\end{proof}

We can use \cref{Prop-EquivalentMinPbFoca} to solve \cref{Eq-MinAction} and prove \cref{Th-ACtionGS}.

\begin{proof}[Proof of \cref{Th-ACtionGS}.]

    By \cref{Prop-Apriori>0}, it is sufficient to exhibit a non-negative $u\in\displaystyle\argmin_{\mathcal{N}_{\omega,\lambda}}\mathcal{S}_{\omega,\lambda}.$ Using \cref{Prop-EquivalentMinPbFoca}, it is sufficient to prove there exists a non-negative $u$ in $\displaystyle\argmin_{u\in\D^{1,2},\; \mathcal{Q}(u)=1}\mathcal{P}_\omega(u)$. Remark that
    $$u\in\argmin_{\Atop{u\in\D^{1,2}}{\mathcal{Q}(u)=1}}\mathcal{P}_\omega(u)\Rightarrow |u|\in\argmin_{\Atop{u\in\D^{1,2}}{\mathcal{Q}(u)=1}}\mathcal{P}_\omega(u),$$
    thus we only need to prove $\displaystyle\argmin_{u\in\D^{1,2},\; \mathcal{Q}(u)=1}\mathcal{P}_\omega(u)$ is not empty. Let $(u_n)_{n\in\N}\subset\D^{1,2}$ be a minimizing sequence of
    $$\inf_{\Atop{u\in\D^{1,2}}{\mathcal{Q}(u)=1}}\mathcal{P}_\omega(u)=P_\omega.$$
    As $\omega>-\mu_0$, there exists $C>0$ such that
    $$\forall u\in\D^{1,2},\; |u|_{\D^{1,2}}^2\leqslant C\mathcal{P}_\omega( u).$$
    Thus, $(u_n)_{n\in\N}$ is bounded in $\D^{1,2}$ and, up to a subsequence, $u_n$ converges to some $u\in\D^{1,2}$, weakly in $\D^{1,2}$ and strongly in $\L^{2\gamma+2}(\R^d)$. It follows $\mathcal{Q}(u)=1$ and, by sequential weak lower semicontinuity,
    $$P_\omega\leqslant\mathcal{P}_\omega (u)\leqslant\liminf_{n\to+\infty}\mathcal{P}_\omega (u_n)=P_\omega,$$
    thus
    $$u\in\argmin_{\Atop{u\in\D^{1,2}}{\mathcal{Q}(u)=1}}\mathcal{P}_\omega(u).$$
    
\end{proof}

\section*{Acknowledgments}

This work was supported by a public grant from the Fondation Mathématique Jacques Hadamard. The author was supported by the ANR project Smooth ANR-22-CE40-0017. This research was supported by the Japan Society for the Promotion of Science (JSPS) Summer Program 2024 (fellow SP24214). The author want to thank Waseda University for hosting their JSPS Summer Program fellowship and in particular Reika Fukuizumi both for organize the fellowship and for their conversation on standing waves in presence of confining potentials. Finally, the author warmly thanks their PhD advisor, Anne de Bouard, for her constant support and helpful advices during the redaction of this paper.

\printbibliography

\end{document}